\newtheorem{thm}{Theorem}[section]
\newtheorem{lem}[thm]{Lemma}
\newtheorem{cor}[thm]{Corollary}
\newtheorem{prop}[thm]{Proposition}
\newtheorem{conj}[thm]{Conjecture}
\theoremstyle{definition}
\newtheorem{example}[thm]{Example}
\newtheorem{defn}[thm]{Definition}
\newtheorem{defns}[thm]{Definitions}
\newtheorem{assum}[thm]{Assumptions}
\newtheorem{rem}[thm]{Remark}
\newtheorem{rems}[thm]{Remarks}
\numberwithin{equation}{thm}
\def\ra{\rightarrow}
\def\lra{\longrightarrow}
\begin{document}


\title[almost complete higher cluster tilting objects]{almost complete
cluster tilting objects \\ in generalized higher cluster categories}

\author{Lingyan GUO}
\address{Universit\'e Paris Diderot - Paris~7, UFR de Math\'ematiques,
Institut de Math\'ematiques de Jussieu, UMR 7586 du CNRS, Case 7012,
B\^atiment Chevaleret, 75205 Paris Cedex 13, France}
\email{guolingyan@math.jussieu.fr}

\date{\today}

\begin{abstract}
We study higher cluster tilting objects in generalized higher
cluster categories arising from dg algebras of higher Calabi-Yau
dimension. Taking advantage of silting mutations of Aihara-Iyama, we
obtain a class of $m$-cluster tilting objects in generalized
$m$-cluster categories. For generalized $m$-cluster categories
arising from strongly ($m+2$)-Calabi-Yau dg algebras, by using
truncations of minimal cofibrant resolutions of simple modules, we
prove that each almost complete $m$-cluster tilting $P$-object has
exactly $m+1$ complements with periodicity property. This leads us
to the conjecture that each liftable almost complete $m$-cluster
tilting object has exactly $m+1$ complements in generalized
$m$-cluster categories arising from $m$-rigid good completed
deformed preprojective dg algebras.
\end{abstract}

 \maketitle

\section{Introduction}

Cluster categories associated to acyclic quivers were introduced in
\cite{BMRRT06}, where the authors gave an additive categorification
of the finite type cluster algebras introduced by Fomin and
Zelevinsky \cite{FZ1} \cite{FZ2}. The cluster category of an acyclic
quiver $Q$ is defined as the orbit category of the derived category
of finite dimensional representations of $Q$ under the action of
${\tau}^{-1}\Sigma$, where $\tau$ is the AR-translation and $\Sigma$
the suspension functor. If we replace the autoequivalence
${\tau}^{-1}\Sigma$ with ${\tau}^{-1}{\Sigma}^m$ for some integer $m
\geq 2$, we obtain the $m$-cluster category, which was first
mentioned and proved to be triangulated in \cite{Ke05}, cf. also
\cite{Th07}. In the cluster category, the exchange relations of the
corresponding cluster algebra are modeled by exchange triangles. It
was shown in \cite{IY08} that every almost complete cluster tilting
object admits exactly two complements. In the higher cluster
category, exchange triangles are replaced by AR-angles, whose
existence (in the more general set up of Krull-Schmidt {\rm
Hom}-finite triangulated categories with Serre functors) was shown
in \cite{IY08}. Both \cite{W} and \cite{ZZ} proved that each almost
complete $m$-cluster tilting object has exactly $m+1$ complements in
an $m$-cluster category. In this paper, we study the analogous
statements for almost complete $m$-cluster tilting objects in
certain $(m+1)$-Calabi-Yau triangulated categories.

Amiot \cite{Am08} constructed generalized cluster categories using
$3$-Calabi-Yau dg algebras which satisfy some suitable assumptions.
A special class is formed by the generalized cluster categories
associated to Ginzburg algebras \cite{Gi06} coming from suitable
quivers with potentials. If the quiver is acyclic, the generalized
cluster category is triangle equivalent to the classical cluster
category. Amiot's results were extended by the author to generalized
$m$-cluster categories in \cite{GUO} by changing the Calabi-Yau
dimension from $3$ to $m+2$ for an arbitrary positive integer $m$.
As one of the applications, she particularly considered generalized
higher cluster categories associated to Ginzburg dg categories
\cite{Ke09} coming from suitable graded quivers with
superpotentials.

In the representation theory of algebras, mutation plays an
important role. Here we recall several kinds of mutation. Cluster
algebras associated to finite quivers without loops or $2$-cycles
are defined using mutation of quivers. As an extension of quiver
mutation, the mutation of quivers with potentials was introduced in
\cite{DWZ}. Moreover, the mutation of decorated representations of
quivers with potentials, which can be viewed as a generalization of
the BGP construction, was also studied in \cite{DWZ}. Tilting
modules over finite dimensional algebras are very nice objects,
although some of them can not be mutated. In the cluster category
associated to an acyclic quiver, mutation of cluster tilting objects
is always possible \cite{BMRRT06}. It is determined by exchange
triangles and corresponds to mutation of clusters in the
corresponding cluster algebra via a certain cluster character
\cite{CK06}. In the derived categories of finite dimensional
hereditary algebras, a mutation operation was given in \cite{BRT} on
silting objects, which were first studied in \cite{KV}. Silting
mutation of silting objects in triangulated categories, which is
always possible, was investigated recently by Aihara and Iyama in
\cite{AI}.

The aim of this paper is to study higher cluster tilting objects in
generalized higher cluster categories arising from dg algebras of
higher Calabi-Yau dimension. Under certain assumptions on the dg
algebras (Assumptions \ref{23}), tilting objects do not exist in the
derived categories (Remark \ref{22}). Thus, we consider silting
objects, e.g., the dg algebras themselves. The author was motivated
by the construction of tilting complexes in Section 4 of
\cite{IR06}.

This article is organized as follows: In Section 2, we list our
assumptions on dg algebras and
use the standard $t$-structure to situate the silting objects which
are iteratively obtained from $P$-indecomposables with respect to
the fundamental domain. In Section 3, using silting objects we
construct higher cluster tilting objects in generalized higher
cluster categories. We show that in such a category each liftable
almost complete $m$-cluster tilting object has at least $m+1$
complements. In Section 4, we specialize to strongly higher
Calabi-Yau dg algebras. By studying minimal cofibrant resolutions of
simple modules of good completed deformed preprojective dg algebras,
we obtain isomorphisms in generalized higher cluster categories
between images of some left mutations and images of some right
mutations of the same $P$-indecomposable. Using this, we derive the
periodicity property of the images of iterated silting mutations of
$P$-indecomposables in Section 5, where we also construct
($m+1$)-Calabi-Yau triangulated categories containing infinitely
many indecomposable $m$-cluster tilting objects. We obtain an
explicit description of the terms of Iyama-Yoshino's AR angles in
this situation, and we deduce that each almost complete $m$-cluster
tilting $P$-object in the generalized $m$-cluster category
associated to a suitable completed deformed preprojective dg algebra
has exactly $m+1$ complements in Section 6. We show that the
truncated dg subalgebra at degree zero of the dg endomorphism
algebra of a silting object in the derived category of a good
completed deformed preprojective dg algebra is also strongly
Calabi-Yau in Section 7. Then we conjecture a class (namely
$m$-rigid) of good completed deformed preprojective dg algebras such
that each liftable almost complete $m$-cluster tilting object should
have exactly $m+1$ complements in the associated generalized
$m$-cluster category.
In Section 8, we give a long exact sequence to show the relations
between extension spaces in generalized higher cluster categories
and extension spaces in derived categories. This sequence
generalizes the short exact sequence obtained by Amiot \cite{Am08}
in the $2$-Calabi-Yau case. At the end, we show that any almost
complete $m$-cluster tilting object in ${\mathcal {C}}_{\Pi}$ is
liftable if $\Pi$ is the completed deformed preprojective dg algebra
arising from an acyclic quiver.

\subsection*{Notation} For a collection
$\mathcal {X}$ of objects in an additive category $\mathcal {T}$, we
denote by add${\mathcal {X}}$ the smallest full subcategory of
$\mathcal {T}$ which contains $\mathcal {X}$ and is closed under
finite direct sums, summands and isomorphisms. Let $k$ be an
algebraically closed field of characteristic zero.


\subsection*{Acknowledgments} The author is supported by the China Scholarship Council (CSC).
This is part of her Ph.~D.~thesis under the supervision of Professor
Bernhard Keller. She is grateful to him for his guidance, patience
and kindness. She also sincerely thanks Pierre-Guy Plamondon, Fan
Qin and Dong Yang for helpful discussions and Zhonghua Zhao for
constant encouragement.

\vspace{.3cm}
\section{Silting objects in derived categories}

Let $A$ be a differential graded (for simplicity, write `dg')
$k$-algebra. We write per$A$ for the {\em perfect derived category}
of $A$, i.e.~the smallest triangulated subcategory of the derived
category ${\mathcal {D}}(A)$ containing $A$ and stable under passage
to direct summands. We denote by ${\mathcal {D}}_{fd} (A)$ the {\em
finite dimensional derived category} of $A$ whose objects are those
of ${\mathcal {D}}(A)$ with finite dimensional total homology.

A dg $k$-algebra $A$ is {\em pseudo-compact} if it is endowed with a
complete separated topology which is generated by two-sided dg
ideals of finite codimension. A (pseudo-compact) dg algebra $A$ is
{\em (topologically) homologically smooth} if $A$ lies in per$A^e$,
where $A^e$ is the (completed) tensor product of $A^{op}$ and $A$
over $k$. For example, suppose that $A$ is of the form
$(\widehat{kQ}, d)$, where $\widehat{kQ}$ is the completed path
algebra of a finite graded quiver $Q$ with respect to the two-sided
ideal $\mathfrak{m}$ of $\widehat{kQ}$ generated by the arrows of
$Q$, and the differential $d$ takes each arrow of $Q$ to an element
of $\mathfrak{m}$; it was stated in \cite{KY09} that $A$ is
pseudo-compact and topologically homologically smooth.

\begin{assum} \label{23}
Let $m$ be a positive integer. Suppose that $A$ is a
(pseudo-compact) dg $k$-algebra and has the following four
additional properties:

\begin{itemize}
\item[a)] $A$ is (topologically) homologically smooth;

\item[b)] the $p$th homology $H^p A$ vanishes for each positive
integer $p$;


\item[c)] the zeroth homology $H^0 A$ is finite dimensional;

\item[d)] $A$ is $(m+2)$-Calabi-Yau as a bimodule, {\it i.e.},~there is an isomorphism in ${\mathcal {D}}(A^e)$
$${\rm RHom}_{A^e} (A, A^e) \simeq {\Sigma}^{-m-2}A.$$
\end{itemize}
\end{assum}

\begin{thm}[\cite{Ke09}] \label{17}
(Completed) Ginzburg dg categories ${\Gamma}_{m+2}(Q,W)$ associated
to graded quivers with superpotentials $(Q,W)$ are (topologically)
homologically smooth and $(m+2)$-Calabi-Yau.
\end{thm}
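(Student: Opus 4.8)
The plan is to exhibit an explicit finite resolution of $\Gamma = \Gamma_{m+2}(Q,W)$ by finitely generated projective $\Gamma^{e}$-modules and to read both assertions off its shape. Write $\bar Q$ for the graded quiver obtained from $Q$ by adjoining, for every arrow $a\colon i\to j$ of $Q$, a reversed arrow $a^{*}\colon j\to i$ of degree $-m-|a|$ and, at every vertex $i$, a loop $t_{i}$ of degree $-m-1$; by construction $\Gamma=(\widehat{k\bar Q},d)$ with $da=0$, $da^{*}=\partial_{a}W$ and $dt_{i}=e_{i}\bigl(\sum_{a}[a,a^{*}]\bigr)e_{i}$, where $W$ is homogeneous of degree $1-m$ and $\partial_{a}$ is the cyclic derivative. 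Since $W$ is a combination of cyclic words of length $\ge 2$, the differential $d$ sends every arrow of $\bar Q$ into the arrow ideal $\mathfrak{m}$, so property a) (pseudo-compactness and topological homological smoothness) is immediate from \cite{KY09} as recalled just before Assumptions~\ref{23}; in the non-completed dg-category case the same argument applies with ordinary tensor products, giving homological smoothness in the usual sense.

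For d), set $R=\prod_{i\in Q_{0}}ke_{i}$, let $V$ be the $R$-bimodule spanned by the arrows of $Q$ and $V^{*}$ the $R$-bimodule spanned by the reversed arrows. I would write down the Koszul-type bimodule complex
\[
\Gamma\otimes_{R}\Gamma\ \lra\ \Gamma\otimes_{R}V^{*}\otimes_{R}\Gamma\ \lra\ \Gamma\otimes_{R}V\otimes_{R}\Gamma\ \lra\ \Gamma\otimes_{R}\Gamma\ \stackrel{\mu}{\lra}\ \Gamma,
\]
placed in homological degrees $3,2,1,0$ and carrying internal degree shifts dictated by the degrees of the $t_{i}$, the $a^{*}$ and the $a$, in which $\mu$ is multiplication, the next map is $1\otimes a\otimes 1\mapsto a\otimes 1-1\otimes a$, the following one is assembled from the second cyclic derivatives of $W$, and the last from $\sum_{a}[a,a^{*}]$. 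Two things must be checked. First, that the total differential (internal $d$ together with these maps) squares to zero, i.e.\ that this really is a complex of dg $\Gamma^{e}$-modules; this is exactly where the cyclic-derivative identities of the superpotential enter, e.g.\ $\sum_{a}[a,\partial_{a}W]=0$, which is what forces $d^{2}t_{i}=0$. Second, that the complex is acyclic over $\Gamma$; in the pseudo-compact case I would filter by powers of $\mathfrak{m}$ and check that the associated graded is the honest Koszul resolution attached to $\bar Q$, convergence of the filtration then giving the result.

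Granting the resolution $\mathbf P\to\Gamma$, the Calabi-Yau property follows by applying $\Hom_{\Gamma^{e}}(-,\Gamma^{e})$ termwise. Up to their internal shifts the four terms go to $\Gamma\otimes_{R}\Gamma$, $\Gamma\otimes_{R}V^{*}\otimes_{R}\Gamma$, $\Gamma\otimes_{R}V\otimes_{R}\Gamma$, $\Gamma\otimes_{R}\Gamma$ — the same complex read backwards — and the bookkeeping of the internal shifts turns this reversal into a single global shift by $m+2$; the differentials match up because the first and the last map above are transposes up to sign and the middle one is graded-self-transpose, again by the superpotential identities. Hence $\RHom_{\Gamma^{e}}(\Gamma,\Gamma^{e})\simeq\Sigma^{-m-2}\Gamma$ in $\mathcal D(\Gamma^{e})$, which is property d). (Alternatively one identifies $\Gamma_{m+2}(Q,W)$ with a deformed $(m+2)$-Calabi-Yau completion of $kQ$ and invokes the general fact that such completions are bimodule Calabi-Yau; the explicit resolution, however, keeps the argument self-contained and is in any case what is used in the later sections.)

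The main obstacle is the bookkeeping: verifying $d_{\mathbf P}^{2}=0$ and, above all, the self-duality of $\mathbf P$ with the exact shift $m+2$, keeping every sign and every internal degree straight in the graded setting, together with the convergence of the $\mathfrak{m}$-adic filtration used to prove acyclicity in the pseudo-compact case. None of the individual verifications is deep, but it is very easy to be off by a sign or by a shift.
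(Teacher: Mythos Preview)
The paper does not prove this statement; it simply quotes it from \cite{Ke09} without argument. Your outline --- the explicit length-three bimodule resolution of $\Gamma$ and its self-duality up to a global shift by $m+2$ --- is precisely the proof strategy of the cited reference, and the alternative you mention at the end (identifying $\Gamma_{m+2}(Q,W)$ with a deformed $(m+2)$-Calabi-Yau completion) is in fact the conceptual framework emphasised there. So your proposal is correct and fills in what the present paper deliberately omits; there is nothing further to compare.
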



\begin{lem} [\cite{Ke08}] \label{24}
Suppose that $A$ is (topologically) homologically smooth. Then the
category ${\mathcal {D}}_{fd}(A)$ is contained in {\rm per}$A$. If
moreover $A$ is ($m+2$)-Calabi-Yau for some positive integer $m$,
then for all objects $L$ of $\mathcal{D}$$(A)$ and $M$ of
${\mathcal{D}}_{fd} (A)$, we have a canonical isomorphism
\begin{center}
$D {\rm Hom}_{{\mathcal{D}}(A)} (M, L) \simeq {\rm
Hom}_{{\mathcal{D}}(A)} (L, {\Sigma}^{m+2}M).$
\end{center}
\end{lem}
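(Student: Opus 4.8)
This is Keller's lemma \cite{Ke08}; I would argue as follows. For the inclusion ${\mathcal D}_{fd}(A)\subseteq\per A$, I would fix $M\in{\mathcal D}_{fd}(A)$ and consider the triangle functor $F=(-)\otimes_A^{\mathbf L}M\colon{\mathcal D}(A^e)\lra{\mathcal D}(A)$, where a bimodule is regarded as a right dg $A$-module through its second variable; then $F$ commutes with coproducts and with direct summands, and $F(A^e)\simeq A\otimes_k M$. Since $k$ is a field and $M$ has finite dimensional total homology, $M$ is isomorphic in ${\mathcal D}(k)$ to a bounded complex of finite dimensional vector spaces, so $A\otimes_k M$ is a finite direct sum of shifts of $A$ and lies in $\per A$. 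Hence $F$ maps $\per A^e$ into $\per A$, and applying it to $A\in\per A^e$ (homological smoothness) shows $M\simeq A\otimes_A^{\mathbf L}M\in\per A$. In particular every object of ${\mathcal D}_{fd}(A)$ is perfect, which I would use freely in the second part.

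For the duality, let $M\in{\mathcal D}_{fd}(A)$ and $L\in{\mathcal D}(A)$. The plan is to run a chain of natural isomorphisms. First, $A$-linearity is the same thing as being a bimodule map out of $A$, so
\[
\RHom_A(M,L)\simeq\RHom_{A^e}\bigl(A,\RHom_k(M,L)\bigr),
\]
where $\RHom_k(M,L)$ carries its canonical $A$-bimodule structure (left action through $L$, right action through $M$). Since $M$ has finite dimensional homology, $\RHom_k(M,L)\simeq DM\otimes_k L$ as bimodules, with $DM:=\RHom_k(M,k)$. Next, as $A$ is perfect over $A^e$, the evaluation map $\RHom_{A^e}(A,A^e)\otimes_{A^e}^{\mathbf L}N\to\RHom_{A^e}(A,N)$ is invertible for every bimodule $N$; combining this with the Calabi--Yau isomorphism $\RHom_{A^e}(A,A^e)\simeq{\Sigma}^{-m-2}A$ gives $\RHom_{A^e}(A,N)\simeq{\Sigma}^{-m-2}\bigl(A\otimes_{A^e}^{\mathbf L}N\bigr)$. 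Taking $N=DM\otimes_k L$ and using the Hochschild-homology identity $A\otimes_{A^e}^{\mathbf L}(DM\otimes_k L)\simeq DM\otimes_A^{\mathbf L}L$, I obtain $\RHom_A(M,L)\simeq{\Sigma}^{-m-2}\bigl(DM\otimes_A^{\mathbf L}L\bigr)$. Applying $D$, the tensor--hom adjunction, and the biduality $DDM\simeq M$ (finite dimensional homology again), I then reach
\[
D\RHom_A(M,L)\simeq{\Sigma}^{m+2}\RHom_A(L,M)\simeq\RHom_A(L,{\Sigma}^{m+2}M),
\]
and passing to $H^0$ yields the asserted canonical isomorphism $D\Hom_{{\mathcal D}(A)}(M,L)\simeq\Hom_{{\mathcal D}(A)}(L,{\Sigma}^{m+2}M)$.

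The step I expect to need the most care is not any single isomorphism but the bookkeeping around them: one must keep track of which of the two commuting $A$-actions plays which role at each stage (in $\RHom_k(M,L)$, in the external product $DM\otimes_k L$, and in the Hochschild-homology identity), and one must check that each of the three underived bimodule identities above survives passage to derived functors — precisely the point at which perfectness of $M$ over $A$ (from the first part) and of $A$ over $A^e$ (homological smoothness) are used. The Calabi--Yau hypothesis itself enters only once, to replace $\RHom_{A^e}(A,A^e)$ by ${\Sigma}^{-m-2}A$.
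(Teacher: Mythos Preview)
The paper does not give its own proof of this lemma: it is stated with a citation to \cite{Ke08} and used as a black box throughout. So there is no in-paper argument to compare against. Your write-up is essentially the standard proof one finds in Keller's survey (and in Ginzburg's notes \cite{Gi06}): for the inclusion ${\mathcal D}_{fd}(A)\subseteq\per A$ you transport perfectness of $A$ over $A^e$ along a tensor functor that sends $A^e$ to a finite sum of shifts of $A$; for the duality you combine the adjunction $\RHom_A(M,L)\simeq\RHom_{A^e}(A,\RHom_k(M,L))$, the evaluation isomorphism available because $A\in\per A^e$, the Calabi--Yau identification of the inverse dualizing bimodule, and finite-dimensionality of $M$ to close the loop. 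That is exactly the intended argument, and your caveat about tracking the two $A$-actions and checking that each underived identity derives correctly is well placed --- that bookkeeping is the only place one can slip. One small point worth tightening: in the first part, make explicit on which side your tensor functor lands (left versus right $A$-modules) so that the conclusion $M\in\per A$ really refers to the same module category you started in; as written, ``regarded as a right dg $A$-module through its second variable'' leaves a small ambiguity about whether $F$ outputs right or left modules.
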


Throughout this paper, we always consider the dg algebras satisfying
Assumptions \ref{23}.

\begin{prop}[\cite{GUO}]\label{4}
Under Assumptions \ref{23}, the triangulated category {\rm per}$A$
is {\rm Hom}-finite.
\end{prop}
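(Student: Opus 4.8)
The plan is to reduce the finiteness of all the graded Hom-spaces $\bigoplus_{n \in \mathbb{Z}} \mathrm{Hom}_{\mathcal{D}(A)}(A, \Sigma^n A)$ to the finite dimensionality of $H^0 A$ together with the vanishing of $H^p A$ for $p>0$, and then bootstrap from the free module $A$ to all of $\mathrm{per}\,A$. First I would observe that since $\mathrm{per}\,A$ is generated by $A$ as a triangulated category closed under summands, and since the class of objects $X$ for which $\bigoplus_n \mathrm{Hom}(Y, \Sigma^n X)$ is finite dimensional for every $Y$ in $\mathrm{per}\,A$ is closed under shifts, extensions (by the long exact sequence obtained from a triangle) and direct summands, it suffices to prove the statement for $X = Y = A$. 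For this I would use the standard identification $\mathrm{Hom}_{\mathcal{D}(A)}(A, \Sigma^n A) \simeq H^n A$.

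Now by Assumption 1.4(b), $H^n A = 0$ for all $n > 0$, and by Assumption 1.4(c), $H^0 A$ is finite dimensional. The only remaining issue is the negative cohomology $H^n A$ for $n < 0$. This is exactly where homological smoothness (Assumption 1.4(a)) and the Calabi-Yau property (Assumption 1.4(d)) enter: smoothness gives, via Lemma 1.6, that $\mathcal{D}_{fd}(A) \subseteq \mathrm{per}\,A$, and more importantly it lets us identify, for $M$ finite dimensional, the spaces $\mathrm{Hom}_{\mathcal{D}(A)}(A, \Sigma^n M) \simeq H^n(M)$ with duals of morphism spaces in the other direction. Concretely, I would take $M = H^0 A$, viewed as a dg module over $A$ concentrated in degree $0$ (this is legitimate because $H^{>0}A = 0$ makes $H^0A = \tau_{\leq 0}A / \tau_{\leq -1}A$ genuinely a dg $A$-module, and it is finite dimensional, hence lies in $\mathcal{D}_{fd}(A) \subseteq \mathrm{per}\,A$). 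The canonical map $A \to H^0 A$ in $\mathcal{D}(A)$ has cone $C$ with homology concentrated in negative degrees, namely $H^n C = H^n A$ for $n \leq -1$. So controlling $H^{<0}A$ is the same as controlling the homology of $C$, and $C$ again lies in $\mathrm{per}\,A$.

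The core of the argument is then: for any $M$ in $\mathcal{D}_{fd}(A)$ and any $n$, Lemma 1.6 gives $D\,\mathrm{Hom}_{\mathcal{D}(A)}(M, \Sigma^{-n}A) \simeq \mathrm{Hom}_{\mathcal{D}(A)}(A, \Sigma^{m+2+n}M)$; the right-hand side equals $H^{m+2+n}(M)$, which is finite dimensional and vanishes for $n \ll 0$ since $M$ has bounded finite-dimensional homology. Hence $\mathrm{Hom}_{\mathcal{D}(A)}(M, \Sigma^j A)$ is finite dimensional for every $j$ and vanishes for $j \gg 0$. Applying this with $M = H^0 A$ controls $H^{<0}A$ through the triangle $H^0 A \to A \to C \to \Sigma H^0A$ — wait, more cleanly, apply it with $M$ ranging over the homologies $H^j A$ themselves, which I would extract one at a time from the (bounded above) standard $t$-structure filtration of $A$. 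Since $H^{>0}A = 0$, the dg module $A$ sits in $\mathcal{D}^{\leq 0}(A)$, and each truncation triangle $\tau_{\leq -1}A \to \tau_{\leq 0}A \to H^0A$ exhibits $A$ as built from the finite-dimensional modules $H^jA[\,j\,]$; it remains to check there are only finitely many nonzero ones, i.e. that $H^jA = 0$ for $j \ll 0$. That last vanishing follows from $A \in \mathrm{per}\,A$ together with the Calabi-Yau duality: $\mathrm{Hom}_{\mathcal{D}(A)}(A, \Sigma^j A) \simeq H^jA$, and by Calabi-Yau duality applied to $M = A$ — legitimate only once we know $A$ itself, or at least its truncations, lie in $\mathcal{D}_{fd}$, which is false in general, so instead I dualize against the finite-dimensional $H^0A$: the composite $A \to H^0A$ is an isomorphism on $H^0$ and on all $H^{>0}=0$, so its cone $C$ has $H^{\leq -1}C = H^{\leq -1}A$ and $H^{\geq 0}C = 0$, i.e. $C \in \mathcal{D}^{\leq -1}(A) \cap \mathrm{per}\,A$. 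Iterating this truncation-and-dualize procedure, and using that for $C \in \mathrm{per}\,A$ one has $\mathrm{Hom}(C, \Sigma^j A) = 0$ for $j \gg 0$ and hence (Calabi-Yau, once the relevant module is finite-dimensional) $H^j C$ bounded below, closes the loop.

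I expect the main obstacle to be precisely this last point: showing the negative homology $H^nA$ is bounded below (equivalently that $A$, though not in $\mathcal{D}_{fd}(A)$, has finite-dimensional total homology in each degree with only finitely many degrees nonzero after truncating off $H^0$). The clean way around it, which I would ultimately write up, is to run the argument entirely inside $\mathrm{per}\,A$: the subcategory of objects $X$ with $\mathrm{Hom}(A,\Sigma^n X)$ finite dimensional for all $n$ and zero for $|n| \gg 0$ is triangulated and closed under summands; it contains every object of $\mathcal{D}_{fd}(A)$ by Lemma 1.6 and Assumptions 1.4(b),(c); and it contains $A$ because $A$ differs from $H^0A \in \mathcal{D}_{fd}(A)$ by an object whose homology, via Calabi-Yau duality against the finitely many $H^jA$, is forced to be finite-dimensional and bounded — so $A$ lies in this subcategory, whence so does all of $\mathrm{per}\,A$. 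The delicate bookkeeping is making the ``bounded'' part of this non-circular; I would handle it by a downward induction using the fact that $\mathrm{RHom}_A(A, N) \simeq N$ commutes with the relevant truncations and that homological smoothness lets one compute negative Homs out of $A$ as positive Homs into a finite-dimensional module.
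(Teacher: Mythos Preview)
This proposition is quoted from \cite{GUO} without proof in the present paper, so there is no in-paper argument to compare against directly. Nonetheless, your proposal contains a genuine error of target, not merely of execution.

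You set out to show that $\bigoplus_{n\in\mathbb{Z}} H^n A$ is finite dimensional, and you explicitly identify the crux as establishing ``$H^j A = 0$ for $j \ll 0$''. This is \emph{false} under Assumptions~\ref{23}. The paper itself supplies a counterexample in Section~5: for the completed deformed preprojective dg algebra $\Pi = \widehat{\Pi}(Q,m+2,0)$ on one vertex with a loop $\alpha$ of degree $-1$, the path $\alpha^{s}$ is a nonzero element of $H^{-s}\Pi$ for every $s \geq 1$, so the homology of $\Pi$ is unbounded below while each $H^{-s}\Pi$ stays finite dimensional. Thus the ``delicate bookkeeping'' you flag at the end is not merely delicate but impossible: the loop you are trying to close does not close.

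What Hom-finiteness of $\mathrm{per}\,A$ actually needs, via the same d\'evissage you propose, is only that \emph{each individual} $H^n A$ be finite dimensional. The class of $X \in \mathrm{per}\,A$ with $\mathrm{Hom}(Y,X)$ finite dimensional for every $Y \in \mathrm{per}\,A$ is already closed under shifts, cones and summands, so it suffices to show $H^n A$ is finite dimensional for each fixed $n$; no boundedness is required for the bootstrap. The argument in \cite{GUO} (extending Amiot's $m=1$ case) proves exactly this by downward induction on $n$: once $H^{0}A,\ldots,H^{-p}A$ are finite dimensional, the truncation $\tau_{\geq -p}A$ lies in $\mathcal{D}_{fd}(A)\subseteq \mathrm{per}\,A$ by Lemma~\ref{24}, hence $\tau_{\leq -p-1}A \in \mathrm{per}\,A$, and Calabi--Yau duality between this perfect object and objects of $\mathcal{D}_{fd}(A)$ then forces $H^{-p-1}A$ to be finite dimensional. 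Several of your intermediate moves (truncating $A$, invoking $\mathcal{D}_{fd}(A)\subseteq\mathrm{per}\,A$, dualizing against $H^{0}A$) are the right ingredients; the error lies solely in aiming for boundedness rather than degreewise finiteness.
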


Let $({\mathcal {D}}A)^c$ denote the full subcategory of ${\mathcal
{D}}(A)$ consisting of compact objects. Since each idempotent in
${\mathcal {D}}(A)$ is split and $({\mathcal {D}}A)^c$ is closed
under direct summands, each idempotent in $({\mathcal {D}}A)^c$ is
also split. Therefore, the category per$A$ which is equal to
$({\mathcal {D}}A)^c$ by \cite{Ke06} is a $k$-linear Hom-finite
category with split idempotents. It follows that per$A$ is a
Krull-Schmidt triangulated category.

\begin{defns} Let $A$ be a dg algebra satisfying Assumptions \ref{23}.
\begin{itemize}

\item[a)] An object $X \in {\rm per}A$ is {\em silting} (resp. {\em tilting})
if per$A=$ thick$X$ the smallest thick subcategory of per$A$
containing $X$, and the spaces ${\rm Hom}_{{\mathcal {D}}(A)} (X,
{\Sigma}^i X)$ are zero for all integers $i > 0$ (resp. $i \neq 0$).

\item[b)] An object $Y \in {\rm per}A$ is {\em almost complete silting} if there
is some indecomposable object $Y'$ in (per$A$)$\setminus$(add$Y$)
such that $Y \oplus Y'$ is a silting object. Here $Y'$ is called a
{\em complement} of $Y$.

\end{itemize}
\end{defns}

Clearly the dg algebra $A$ itself is a silting object since the
space ${\rm Hom}_{{\mathcal {D}}(A)} (A, {\Sigma}^i A)$ is
isomorphic to $H^i A$ which is zero for each positive integer.

\begin{rem} \label{22}
Under Assumptions \ref{23}, tilting objects do not exist in per$A$.
Otherwise, let $T$ be a tilting object in per$A$. By definition, the
object $T$ generates per$A$. Then for any object $M$ in
${\mathcal{D}}(A)$, it belongs to the subcategory ${\mathcal
{D}}_{fd}(A)$ if and only if ${\sum}_{p \in {\mathbb{Z}}} {\rm dim}
{\rm Hom}_{{\mathcal {D}}(A)} (T, {\Sigma}^p M)$ is finite. Since
the space ${\rm Hom}_{{\mathcal {D}}(A)}(T,T)$ is finite dimensional
by Proposition \ref{4} and the space ${\rm Hom}_{{\mathcal
{D}}(A)}(T,{\Sigma}^pT)$ vanishes for any nonzero integer $p$, the
object $T$ belongs to ${\mathcal {D}}_{fd}(A)$. Note that ${\mathcal
{D}}_{fd}(A)$ is $(m+2)$-Calabi-Yau as a triangulated category by
Lemma \ref{24}. Thus, we have the following isomorphism
$$(0=) {\rm Hom}_{{\mathcal {D}} (A)} (T, {\Sigma}^{m+2} T)
\simeq D {\rm Hom}_{{\mathcal {D}} (A)} (T,T) ( \neq 0).$$ Here we
obtain a contradiction. Therefore, tilting objects do not exist.
\end{rem}

Assume that $H^0A$ is a basic algebra. Let $e$ be a primitive
idempotent element of $H^0 A$. We denote by $P$ the indecomposable
direct summand $e A$ (in the derived category ${\mathcal {D}}(A)$)
of $A$ and call it a {\em $P$-indecomposable}. We denote by $M$ the
dg module $(1-e) A$. It follows from Proposition \ref{4} that the
subcategory add$M$ is functorially finite \cite{AS} in add$A$. Let
us write $RA_0$ for $P$ (later we will also write $LA_0$ for $P$).

By induction on $t \geq 1$, we define $RA_t$ as follows: take a
minimal right (add$M$)-approximation $f^{(t)}: A^{(t)} \ra RA_{t-1}$
of $RA_{t-1}$ in ${\mathcal {D}}(A)$ and form the triangle in
${\mathcal {D}}(A)$
\[
\xymatrix { RA_t \ar[r]^{\alpha^{(t)}} & A^{(t)} \ar[r]^-{f^{(t)}} &
RA_{t-1} \ar[r] & {\Sigma} RA_t.}
\]
Dually, for each positive integer $t$, we take a minimal left
(add$M$)-approximation $g^{(t)}: LA_{t-1} \ra B^{(t)}$ of $LA_{t-1}$
in ${\mathcal {D}}(A)$, and form the triangle in ${\mathcal {D}}(A)$
\[
\xymatrix { LA_{t-1} \ar[r]^{g^{(t)}} & B^{(t)}
\ar[r]^-{\beta^{(t)}} & LA_{t} \ar[r] & {\Sigma} LA_{t-1}.}
\]
The object $RA_t$ is called the {\em right mutation} of $RA_{t-1}$
(with respect to $M$), and $LA_t$ is called the {\em left mutation}
of $LA_{t-1}$ (with respect to $M$).

\begin{thm}[\cite{AI}]\label{5}
For each nonnegative integer $t$, the objects $M \oplus RA_t$ and $M
\oplus LA_t$ are silting objects in {\rm per}$A$. Moreover, any
basic silting object containing $M$ as a direct summand is either of
the form $M \oplus RA_t$ or of the form $M \oplus LA_t$.
\end{thm}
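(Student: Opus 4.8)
The plan is to recognize the iterated mutations $RA_t$ and $LA_t$ as the silting objects produced by the silting mutation theory of Aihara--Iyama \cite{AI}, and then to invoke their classification result. First I would verify that each step of the construction is a legitimate silting (left or right) mutation in the sense of \cite{AI}. For this one needs three ingredients: (i) that $A = M \oplus P$ is a silting object in $\per A$ (already noted in the excerpt, since $\Hom_{\mathcal D(A)}(A,\Sigma^i A) \cong H^i A = 0$ for $i>0$); (ii) that $\per A$ is a Krull--Schmidt triangulated category (established above via Proposition \ref{4} and split idempotents); and (iii) that $\add M$ is functorially finite in $\add A$, so that the minimal right $(\add M)$-approximation $f^{(t)}\colon A^{(t)}\to RA_{t-1}$ and the minimal left $(\add M)$-approximation $g^{(t)}\colon LA_{t-1}\to B^{(t)}$ exist at every stage --- this is exactly the remark (citing \cite{AS}) made just before the construction. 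With these in place, the triangle defining $RA_t$ exhibits $M \oplus RA_t$ as the left silting mutation of $M \oplus RA_{t-1}$ with respect to the summand $RA_{t-1}$, and dually $M \oplus LA_t$ is the right silting mutation of $M \oplus LA_{t-1}$; by \cite{AI} silting mutation of a silting object is again silting, so by induction on $t$ all the $M\oplus RA_t$ and $M\oplus LA_t$ are silting. One small point to check in the induction is that the approximations are taken relative to $\add M$ and not relative to the current silting object, but since $M$ is a common summand of every $M\oplus RA_{t-1}$ (resp.\ $M\oplus LA_{t-1}$), a minimal right (resp.\ left) $(\add M)$-approximation is precisely what the single-summand mutation at $RA_{t-1}$ (resp.\ $LA_{t-1}$) calls for; I would spell this identification out carefully.

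For the second, harder assertion --- that \emph{every} basic silting object containing $M$ is one of the $M\oplus RA_t$ or $M\oplus LA_t$ --- I would argue via the partial order on silting objects and the ``no infinite regress'' phenomenon in $\per A$. Recall from \cite{AI} that on the set of basic silting objects of a Krull--Schmidt triangulated category there is a partial order $\geq$, that left mutation strictly decreases it and right mutation strictly increases it, and that (under the Hom-finiteness of $\per A$, Proposition \ref{4}) two silting objects $S \geq S'$ with $S' \geq S$ differing in one summand are connected by a single mutation. Given a basic silting object $N = M \oplus N'$ with $N'$ indecomposable, $N$ is comparable in the mutation quiver to $A = M\oplus P = M \oplus RA_0$: I would show that iterating left mutation at the non-$M$ summand starting from $A$ produces exactly the chain $\cdots \to M\oplus RA_2 \to M\oplus RA_1 \to M\oplus RA_0$, and iterating right mutation produces $M\oplus LA_0 \to M\oplus LA_1 \to \cdots$; since $M$ has $n-1$ indecomposable summands and each silting object has exactly $n$ summands (equal to the number of simples of $H^0 A$, by Hom-finiteness), the mutation at the unique ``missing'' summand has, at each step, exactly two outcomes, one increasing and one decreasing the order. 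Thus the silting objects containing $M$ form a single ``line'' in the mutation graph, totally ordered, with $A$ sitting on it; it remains to see this line is exhausted by the $RA_t$ on one side and the $LA_t$ on the other, i.e.\ that there is no silting object containing $M$ strictly below all $RA_t$ or strictly above all $LA_t$.

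The main obstacle, as I see it, is precisely this exhaustiveness/completeness step: ruling out a silting object containing $M$ that is not reached by finitely many mutations from $A$. I would handle it using the standard $t$-structure on $\mathcal D(A)$ (whose heart, by Assumptions \ref{23}, is $\mod H^0 A$) to control the position of $RA_t$ and $LA_t$ relative to the fundamental domain --- this is flagged in the introduction as the content of Section 2, so I would lean on the cohomological amplitude estimates developed there. Concretely, any silting object $N \supseteq M$ lies in some finite interval $\Sigma^{-a}A, \dots, \Sigma^b A$ of the derived category (Hom-finiteness plus the silting condition forces bounded amplitude), and within such a bounded region only finitely many mutation steps away from $A$ are possible; combined with the fact that left mutations eventually ``leave'' any fixed bounded region downward and right mutations leave it upward, every such $N$ must coincide with some $M\oplus RA_t$ or $M\oplus LA_t$. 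Filling in these amplitude bounds rigorously --- showing that the mutated objects do not stay trapped and that the order is in fact the full chain --- is where the real work lies; the rest is a bookkeeping of triangles and approximations.
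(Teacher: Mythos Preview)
The paper does not contain a proof of this theorem: it is stated with the attribution \cite{AI} and used as a black box. There is therefore nothing to compare your proposal against in the paper itself; what you have written is essentially an outline of how Aihara--Iyama establish the result in their paper, not a comparison target.

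That said, a few comments on your outline. The first half (closure of silting under mutation, by induction) is fine and is indeed the content of the basic mutation theorem in \cite{AI}; be careful with the left/right terminology, since your identification of $M\oplus RA_t$ as a \emph{left} silting mutation is at odds with the paper's naming convention (the triangle for $RA_t$ uses a minimal \emph{right} approximation $f^{(t)}$, and the paper accordingly calls $RA_t$ the right mutation). For the classification half, your sketch has the right architecture --- total order on complements of $M$, consecutive elements related by a single mutation --- but the step you flag as the ``main obstacle'' is handled in \cite{AI} more cleanly than via amplitude bounds: one shows that any two basic silting objects $T\geq U$ have only finitely many basic silting objects between them (this uses Hom-finiteness, here Proposition~\ref{4}), and that any complement $N'$ of $M$ gives a silting object $M\oplus N'$ comparable to $A$ in the silting order. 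Together these force $M\oplus N'$ to be reached from $A$ by finitely many irreducible mutations at the non-$M$ summand, which is exactly the chain $\{M\oplus RA_t\}\cup\{M\oplus LA_t\}$. Your bounded-amplitude heuristic gestures at this finiteness but does not isolate the precise lemma; if you want a self-contained argument, that interval-finiteness statement is the lemma to prove.
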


From the construction and the above theorem, we know that the
morphisms $\alpha^{(t)}$ (resp. $\beta^{(t)}$) are minimal left
(resp. minimal right) ${\rm (add}M \rm{)}$-approximations in
${\mathcal {D}}(A)$ and that the objects $RA_t$ and $LA_{t}$ are
indecomposable objects in ${\mathcal {D}}(A)$ which do not belong to
${\rm add}M$.

We simply denote ${\mathcal{D}} (A)$ by $\mathcal{D}$. Let
${\mathcal{D}}^{{\leq} {0}}$ (resp. ${\mathcal{D}}^{{\geq} {1}}$) be
the full subcategory of $\mathcal{D}$ whose objects are the dg
modules $X$ such that $H^p X$ vanishes for each positive (resp.
nonpositive) integer $p$. For a complex $X$ of $k$-modules, we
denote by ${\tau}_{{\leq} 0} X$ the subcomplex with $({\tau}_{{\leq}
0} X)^0 = {ker} d^0$, and $({\tau}_{{\leq} 0} X)^i = X^i$ for
negative integers $i$, otherwise zero. Set ${\tau}_{{\geq} 1} X =
{X/{{\tau}_{{\leq} 0} X}}.$

\begin{prop}\label{15}

For each integer $t \geq 0$, the object $RA_t$ belongs to the
subcategory ${\mathcal{D}}^{{\leq} {t}} \cap \,
^{\perp}{\mathcal{D}}^{{\leq} {-1}} \cap \, {\rm per}A $, and the
object $LA_t$ belongs to the subcategory ${\mathcal{D}}^{{\leq} {0}}
\cap \, ^{\perp}{\mathcal{D}}^{{\leq} {-t-1}} \cap \, {\rm per}A $.
\end{prop}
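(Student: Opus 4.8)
The plan is to argue by induction on $t$, using the defining triangles of $RA_t$ and $LA_t$ together with the standard $t$-structure $({\mathcal D}^{\leq 0}, {\mathcal D}^{\geq 1})$ and its properties. For the base case $t=0$ we have $RA_0 = LA_0 = P = eA$, which is a direct summand of $A$; by Assumptions \ref{23}(b) the homology of $A$ is concentrated in nonpositive degrees, so $P \in {\mathcal D}^{\leq 0}$, and $P \in \,^{\perp}{\mathcal D}^{\leq -1}$ because ${\rm Hom}_{\mathcal D}(eA, Y) \simeq H^0(eY) = e\,H^0 Y$ vanishes when $H^0 Y = 0$ (indeed $Y \in {\mathcal D}^{\leq -1}$ forces $H^p Y = 0$ for all $p \geq 0$). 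Of course $P \in {\rm per}A$ since it is a summand of $A$. This handles both statements at $t=0$ (note ${\mathcal D}^{\leq 0} = {\mathcal D}^{\leq t}$ and ${\mathcal D}^{\leq -1} = {\mathcal D}^{\leq -t-1}$ there).

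For the inductive step on $RA_t$, suppose $RA_{t-1} \in {\mathcal D}^{\leq t-1} \cap \,^{\perp}{\mathcal D}^{\leq -1} \cap \,{\rm per}A$. The object $A^{(t)} \in {\rm add}M \subseteq {\rm add}A$, so $A^{(t)} \in {\mathcal D}^{\leq 0}$ and $A^{(t)} \in \,^{\perp}{\mathcal D}^{\leq -1}$. Applying the long exact homology sequence to the triangle $RA_t \to A^{(t)} \to RA_{t-1} \to \Sigma RA_t$, we get $H^p(RA_t) = 0$ for $p \geq t+1$ from $H^p(A^{(t)}) = 0$ ($p \geq 1$) and $H^{p-1}(RA_{t-1}) = H^p(\Sigma^{-1} \cdots)$—more precisely $H^p(RA_t)$ sits between $H^{p-1}(RA_{t-1})$ and $H^p(A^{(t)})$, and both vanish for $p \geq t+1$; hence $RA_t \in {\mathcal D}^{\leq t}$. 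For the perpendicularity, apply ${\rm Hom}_{\mathcal D}(-, Y)$ with $Y \in {\mathcal D}^{\leq -1}$ to the same triangle: ${\rm Hom}(\Sigma RA_{t-1}, Y) \to {\rm Hom}(\Sigma A^{(t)}, Y) \to {\rm Hom}(RA_t, Y) \to {\rm Hom}(RA_{t-1}, Y) \to {\rm Hom}(A^{(t)}, Y)$. The rightmost term vanishes by induction; the term ${\rm Hom}(\Sigma A^{(t)}, Y) = {\rm Hom}(A^{(t)}, \Sigma^{-1}Y)$ vanishes since $\Sigma^{-1}Y \in {\mathcal D}^{\leq -2} \subseteq {\mathcal D}^{\leq -1}$ and $A^{(t)} \in \,^{\perp}{\mathcal D}^{\leq -1}$; therefore ${\rm Hom}(RA_t, Y) = 0$, i.e.\ $RA_t \in \,^{\perp}{\mathcal D}^{\leq -1}$. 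Finally $RA_t \in {\rm per}A$ since ${\rm per}A$ is triangulated and $A^{(t)}, RA_{t-1} \in {\rm per}A$. The argument for $LA_t$ is dual: from the triangle $LA_{t-1} \to B^{(t)} \to LA_t \to \Sigma LA_{t-1}$ with $B^{(t)} \in {\rm add}M$, the homology sequence gives $H^p(LA_t) = 0$ for $p \geq 1$ (using $LA_{t-1} \in {\mathcal D}^{\leq 0}$ and $B^{(t)} \in {\mathcal D}^{\leq 0}$, so in fact $LA_t \in {\mathcal D}^{\leq 0}$ directly), and applying ${\rm Hom}_{\mathcal D}(-, Y)$ with $Y \in {\mathcal D}^{\leq -t-1}$ together with $B^{(t)} \in \,^{\perp}{\mathcal D}^{\leq -1} \subseteq \,^{\perp}{\mathcal D}^{\leq -t-1}$ and the inductive hypothesis $LA_{t-1} \in \,^{\perp}{\mathcal D}^{\leq -t}$ yields $LA_t \in \,^{\perp}{\mathcal D}^{\leq -t-1}$.

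The one point that needs care—and which I expect to be the main technical obstacle—is the bookkeeping of exactly which truncation index improves at each step: for the right mutations the homological bound degrades ($t-1 \rightsquigarrow t$) while the perpendicularity index stays at $-1$, whereas for the left mutations the homological bound stays at $0$ while the perpendicularity index improves ($-t \rightsquigarrow -t-1$). Keeping the two inductions genuinely separate, and checking that the shift $\Sigma^{-1}$ appearing in the Hom-sequence lands one in the correct $\,^{\perp}{\mathcal D}^{\leq ?}$ class, is where the verification must be done honestly; everything else is a routine application of the long exact sequences in homology and in ${\rm Hom}$ attached to a triangle, plus the elementary description of $\,^{\perp}{\mathcal D}^{\leq n}$ in terms of vanishing of ${\rm Hom}$ into objects with homology concentrated in degrees $\leq n$.
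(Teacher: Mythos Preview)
Your approach is exactly the paper's: induction on $t$ via the defining triangles, checking membership in ${\mathcal D}^{\leq ?}$ by the homology long exact sequence and membership in $\,^{\perp}{\mathcal D}^{\leq ?}$ by applying ${\rm Hom}_{\mathcal D}(-,Y)$.

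The bookkeeping you yourself flagged as delicate does need correction, though. In the long exact sequence obtained from $RA_t \to A^{(t)} \to RA_{t-1} \to \Sigma RA_t$ by applying ${\rm Hom}_{\mathcal D}(-,Y)$, the term ${\rm Hom}(RA_t,Y)$ is flanked by ${\rm Hom}(A^{(t)},Y)$ and ${\rm Hom}(\Sigma^{-1}RA_{t-1},Y)$, not by ${\rm Hom}(\Sigma A^{(t)},Y)$ and ${\rm Hom}(RA_{t-1},Y)$ as you wrote. The first neighbour vanishes because $A^{(t)}\in{\rm add}\,A\subseteq\,^{\perp}{\mathcal D}^{\leq -1}$; the second equals ${\rm Hom}(RA_{t-1},\Sigma Y)$, and here $\Sigma Y\in{\mathcal D}^{\leq -2}\subseteq{\mathcal D}^{\leq -1}$, so it vanishes by the induction hypothesis. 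Note that $\Sigma$ \emph{lowers} the cohomological bound, so your assertion ``$\Sigma^{-1}Y\in{\mathcal D}^{\leq -2}$'' has the shift going the wrong way; in fact $\Sigma^{-1}Y\in{\mathcal D}^{\leq 0}$ only, and ${\rm Hom}(\Sigma A^{(t)},Y)$ need not vanish. With the exact sequence written correctly and the shift direction fixed, the argument goes through exactly as in the paper's proof.
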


\begin{proof}
We consider the triangles appearing in the constructions of $RA_t$,
and similarly for $LA_t$.

The object $RA_0 (= P)$ belongs to ${\mathcal{D}}^{{\leq} {0}} \cap
\, ^{\perp}{\mathcal{D}}^{{\leq} {-1}} \cap \, {\rm per}A$ since the
dg algebra $A$ has its homology concentrated in nonpositive degrees.
The object $RA_t$ is an extension of $A^{(t)}$ by ${\Sigma}^{-1}
RA_{t-1}$, which both belong to the subcategory
${\mathcal{D}}^{{\leq} {t}} \cap \, {\rm per}A$. Thus, the object
$RA_t$ belongs to ${\mathcal{D}}^{{\leq} {t}} \cap \, {\rm per}A$.
We do induction on $t$ to show that $RA_t$ belongs to
$^{\perp}{\mathcal{D}}^{{\leq} {-1}}$. Let $Y$ be an object in
${\mathcal {D}}^{\leq {-1}}$. By applying the functor ${\rm
Hom}_{\mathcal {D}} (-,Y)$ to the triangle \[ \xymatrix { RA_t
\ar[r] & A^{(t)} \ar[r]^-{f^{(t)}} & RA_{t-1} \ar[r] & {\Sigma}
RA_t,}
\] we obtain the long exact sequence $$\ldots \ra
{\rm Hom}_{\mathcal {D}} (A^{(t)},Y) \ra {\rm Hom}_{\mathcal {D}} (RA_t,Y)
\ra {\rm Hom}_{\mathcal {D}} ({\Sigma}^{-1} RA_{t-1},Y) \ra \ldots
.$$ Since ${\Sigma} Y$ belongs to ${\mathcal{D}}^{{\leq} {-2}}$, by
hypothesis, the space ${\rm Hom}_{\mathcal {D}} ({\Sigma}^{-1}
RA_{t-1},Y)$ is zero. Thus, the object $RA_t$ belongs to
$^{\perp}{\mathcal{D}}^{{\leq} {-1}}$.
\end{proof}

Assume that $\{e_1,\cdots,e_n\}$ is a collection of primitive
idempotent elements of $H^0A$. We denote by $S_i$ the simple module
corresponding to $e_iA$. For any object $X$ in per$A$, we define the
support of $X$ as follows:

\begin{defn} The {\em support} of $X$ is defined as the set
$$supp\,(X) := \{j \in {\mathbb{Z}} | {\rm Hom}_{{\mathcal {D}}}(X,{\Sigma}^j S_i)
\neq 0 \, \mbox{for some simple module}\, S_i \}.$$
\end{defn}

\begin{prop} \label{27}
For any nonnegative integer $t$, we have the following inclusions:
\begin{itemize}
\item[1)] $\{-t\}  \subseteq supp\,(RA_t) \subseteq [-t,0],$
\item[2)] $\{t\}  \subseteq supp\,(LA_t) \subseteq [0,t]$.
\end{itemize}
\end{prop}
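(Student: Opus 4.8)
The plan is to reduce everything to the structural information already recorded in Proposition~\ref{15} together with the defining triangles of $RA_t$ and $LA_t$, using the Calabi-Yau duality of Lemma~\ref{24} to detect the extremal elements of the support. I will treat part~1) in detail; part~2) follows by the dual argument (replacing right approximations by left approximations, $\mathcal D^{\leq 0}$ by $\mathcal D^{\geq 1}$, and applying the Serre functor in the opposite direction), so I would only indicate the changes.

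First I would establish the upper bound $supp\,(RA_t)\subseteq[-t,0]$. By Proposition~\ref{15}, $RA_t$ lies in $\mathcal D^{\leq t}\cap{}^{\perp}\mathcal D^{\leq -1}$. For $j>0$, the simple module $S_i$ lies in $\mathcal D^{\leq 0}$, hence ${\Sigma}^j S_i$ lies in $\mathcal D^{\leq -1}$ (indeed in $\mathcal D^{\leq -j}$), so ${\rm Hom}_{\mathcal D}(RA_t,{\Sigma}^j S_i)=0$ because $RA_t\in{}^{\perp}\mathcal D^{\leq -1}$; this kills the positive part of the support. For $j<-t$, I would use that $RA_t\in\mathcal D^{\leq t}\cap\mathrm{per}A\subseteq\mathcal D_{fd}$-dual territory: apply Lemma~\ref{24} with $M=S_i$ and $L=RA_t$ to get ${\rm Hom}_{\mathcal D}(RA_t,{\Sigma}^j S_i)\simeq D{\rm Hom}_{\mathcal D}({\Sigma}^{j}S_i,{\Sigma}^{m+2}\cdot{\Sigma}^{-(m+2)}RA_t)$; more cleanly, since $S_i\in\mathcal D_{fd}$, duality gives ${\rm Hom}_{\mathcal D}(RA_t,{\Sigma}^j S_i)\simeq D{\rm Hom}_{\mathcal D}({\Sigma}^j S_i,{\Sigma}^{m+2}RA_t)=D{\rm Hom}_{\mathcal D}(S_i,{\Sigma}^{m+2-j}RA_t)$, and $RA_t\in\mathcal D^{\leq t}$ forces this to vanish once $m+2-j>t$, i.e. for all $j<-t$ (as $m\geq 1$). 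Hence $supp\,(RA_t)\subseteq[-t,0]$.

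Next, the nontrivial direction: $\{-t\}\subseteq supp\,(RA_t)$, i.e. ${\rm Hom}_{\mathcal D}(RA_t,{\Sigma}^{-t}S_i)\neq 0$ for some $i$. I would prove this by induction on $t$, extracting the information from the defining triangle $RA_t\to A^{(t)}\xrightarrow{f^{(t)}}RA_{t-1}\to{\Sigma}RA_t$. Applying ${\rm Hom}_{\mathcal D}(-,{\Sigma}^{-t}S_i)$ yields a long exact sequence relating ${\rm Hom}(RA_t,{\Sigma}^{-t}S_i)$ to ${\rm Hom}({\Sigma}^{-1}RA_{t-1},{\Sigma}^{-t}S_i)={\rm Hom}(RA_{t-1},{\Sigma}^{-(t-1)}S_i)$ and ${\rm Hom}(A^{(t)},{\Sigma}^{-t}S_i)$. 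By induction the middle term is nonzero for some $i$; the term ${\rm Hom}(A^{(t)},{\Sigma}^{-t}S_i)$ vanishes because $A^{(t)}\in\mathrm{add}M\subseteq\mathrm{add}A$ has homology in degree~$0$ only, so ${\rm Hom}(A^{(t)},{\Sigma}^{-t}S_i)\simeq$ a degree-$(-t)$ graded piece which is zero for $t\geq 1$; and the incoming term ${\rm Hom}(A^{(t)},{\Sigma}^{-t+1}S_i)$ also vanishes for $t\geq 2$ for the same reason, while the $t=1$ case needs a separate direct check that $RA_1\to A^{(1)}\to P$ is such that the connecting map ${\rm Hom}(P,{\Sigma}^{-1}S_i)\to{\rm Hom}(RA_1,{\Sigma}^{-1}S_i)$ is nonzero — here I would use minimality of the approximation $f^{(1)}$ (its cone cannot be ``seen'' by $\mathrm{add}M$ in degree $0$, forcing the shifted simple to detect $RA_1$). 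Consequently the long exact sequence forces ${\rm Hom}(RA_t,{\Sigma}^{-t}S_i)\neq 0$.

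The main obstacle I anticipate is precisely the base/low-degree step of this induction: showing the connecting homomorphism is genuinely nonzero rather than merely that the long exact sequence is consistent with nonvanishing. This requires exploiting \emph{minimality} of the $\mathrm{add}M$-approximations $f^{(t)}$ — the statement from Theorem~\ref{5} and the paragraph after it that $\alpha^{(t)}$ is a \emph{minimal} left $\mathrm{add}M$-approximation and $RA_t\notin\mathrm{add}M$ — so that $RA_t$ cannot be an extension of modules in $\mathrm{add}M$ by $\mathrm{add}M$ in a way that would make its lowest graded homology invisible to all simples. I would make this precise by observing that $H^{-t}RA_t\neq 0$ (tracking the triangles degree by degree with Proposition~\ref{15}, since $RA_t\in\mathcal D^{\leq t}$ but $RA_{t-1}\in\mathcal D^{\leq t-1}$ so the shift ${\Sigma}^{-1}RA_{t-1}$ contributes in degree $t$ forcing $H^{-t}RA_t\neq 0$ whenever the approximation is not split), and then noting that a nonzero $H^{-t}RA_t$ over the finite-dimensional algebra $H^0A$ must have a simple quotient or sub detected by some ${\Sigma}^{-t}S_i$; combined with $RA_t\in{}^{\perp}\mathcal D^{\leq -1}$ this pins the nonvanishing to exactly degree $-t$.
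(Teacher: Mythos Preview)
Your overall architecture matches the paper's proof: Proposition~\ref{15} for the upper bound, and induction along the defining triangle for the lower bound. But you miss the one idea that makes the lower bound clean, and this leaves your $t=1$ case genuinely incomplete.

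For the upper bound, the detour through Calabi--Yau duality is unnecessary. Since $RA_t\in\mathcal D^{\leq t}$ and $\Sigma^{r'}S_i\in\mathcal D^{\geq t+1}$ for $r'\leq -t-1$, the $t$-structure orthogonality $\mathrm{Hom}(\mathcal D^{\leq t},\mathcal D^{\geq t+1})=0$ gives vanishing directly; this is exactly what the paper does.

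For the lower bound, the paper does \emph{not} try to show $\mathrm{Hom}(RA_t,\Sigma^{-t}S_i)\neq 0$ for ``some $i$''. It fixes $S_P$, the simple attached to the summand $P=eA$ being mutated. The point is that $A^{(t)}\in\mathrm{add}\,M$ with $M=(1-e)A$, so $\mathrm{Hom}_{\mathcal D}(A^{(t)},\Sigma^j S_P)=0$ for \emph{every} integer $j$, not merely for $j\neq 0$. Applying $\mathrm{Hom}_{\mathcal D}(-,\Sigma^{1-t}S_P)$ to the triangle $RA_t\to A^{(t)}\to RA_{t-1}\to\Sigma RA_t$ then gives an isomorphism
\[
\mathrm{Hom}_{\mathcal D}(RA_t,\Sigma^{-t}S_P)\;\simeq\;\mathrm{Hom}_{\mathcal D}(RA_{t-1},\Sigma^{1-t}S_P)
\]
uniformly for all $t\geq 1$, with base case $\mathrm{Hom}_{\mathcal D}(P,S_P)\neq 0$. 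No separate treatment of $t=1$ is needed, and minimality of $f^{(1)}$ plays no role.

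By contrast, your argument for $t=1$ is not complete: you speak of the ``connecting map $\mathrm{Hom}(P,\Sigma^{-1}S_i)\to\mathrm{Hom}(RA_1,\Sigma^{-1}S_i)$'', but $\mathrm{Hom}(P,\Sigma^{-1}S_i)=0$, so that map carries no information; and your fallback via $H^{-t}RA_t$ has the wrong sign (under the paper's conventions it is $H^{t}RA_t$ that governs $\mathrm{Hom}(RA_t,\Sigma^{-t}S_i)$) and is only sketched. Choosing $S_i=S_P$ removes all of these difficulties at once.
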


\begin{proof} We only show the first statement, since the second one can be deduced in a similar way.

By Proposition \ref{15}, the object $RA_t$ belongs to
${\mathcal{D}}^{{\leq} {t}} \cap \, ^{\perp}{\mathcal{D}}^{{\leq}
{-1}} \cap \, {\rm per}A $. Therefore, the space ${\rm
Hom}_{\mathcal {D}}(RA_t, {\Sigma}^r S_i)$ vanishes for each integer
$r \geq 1$ since ${\Sigma}^r S_i$ lies in ${\mathcal {D}}^{\leq -1}$
and the space ${\rm Hom}_{\mathcal {D}}(RA_t, {\Sigma}^{r'} S_i)$
vanishes for each integer $r' \leq -t-1$ since ${\Sigma}^{r'} S_i$
lies in ${\mathcal {D}}^{\geq t+1}$. Thus, we have the inclusion
$supp\,(RA_t) \subseteq [-t,0]$.

Let $S_P$ be the simple module corresponding to the
$P$-indecomposable $P$ from which $RA_t$ and $LA_t$ are obtained by
mutation. We will show that ${\rm Hom}_{\mathcal {D}}(RA_t,
{\Sigma}^{-t} S_P)$ is nonzero. Clearly, the space ${\rm
Hom}_{\mathcal {D}}(P, S_P)$ is nonzero. We do induction on the
integer $t$. Assume that the space ${\rm Hom}_{\mathcal
{D}}(RA_{t-1}, {\Sigma}^{1-t} S_P)$ is nonzero. Applying the functor
${\rm Hom}_{\mathcal {D}}(-,{\Sigma}^{1-t}S_P)$ to the triangle
$$RA_t \ra A^{(t)} \ra RA_{t-1} \ra {\Sigma}RA_t,$$where $A^{(t)}$ belongs to (add$A$)$\setminus$(add$P$), we get the long
exact sequence
$$\cdots \ra ({\Sigma}A^{(t)},{\Sigma}^{1-t}S_P) \ra
({\Sigma}RA_t,{\Sigma}^{1-t}S_P) \ra (RA_{t-1},{\Sigma}^{1-t}S_P)
 \ra (A^{(t)}, {\Sigma}^{1-t}S_P)
 \ra \cdots ,$$ where both the
leftmost term and the rightmost term are zero. Therefore, ${\rm
Hom}_{\mathcal {D}}(RA_t,{\Sigma}^{-t}S_P)$ is nonzero. This
completes the proof.
\end{proof}

Now we deduce the following corollary, which can also be deduced
from Theorem 2.43 in \cite{AI}.

\begin{cor}\label{19}
\begin{itemize}
\item[1)] For any two nonnegative integers $r \neq t$, the object
$RA_r$ is not isomorphic to $RA_t$, and the object $LA_r$ is not
isomorphic to $LA_t$.

\item[2)] For any two positive integers $r$ and $t$, the objects
$RA_r$ and $LA_t$ are not isomorphic.
\end{itemize}
\end{cor}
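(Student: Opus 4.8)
The plan is to deduce the whole statement from Proposition \ref{27}, rather than from Theorem 2.43 of \cite{AI}, using only that the support is an isomorphism invariant in $\mathcal{D}$. Indeed, $supp(X)$ is defined purely through the vanishing and non-vanishing of the spaces $\Hom_{\mathcal{D}}(X,{\Sigma}^j S_i)$, so $X \cong X'$ in $\mathcal{D}$ forces $supp(X) = supp(X')$; hence to separate two objects it is enough to exhibit an integer lying in one support but not the other.

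For the right mutations in part 1), suppose $r \neq t$, say $r < t$. Proposition \ref{27} gives $-t \in supp(RA_t)$, whereas $supp(RA_r) \subseteq [-r,0]$; since $t > r \geq 0$ we have $-t < -r$, so $-t \notin supp(RA_r)$. Thus $supp(RA_r) \neq supp(RA_t)$ and $RA_r \not\cong RA_t$. The left mutations are handled symmetrically, using $t \in supp(LA_t)$ together with $supp(LA_r) \subseteq [0,r]$, which does not contain $t$ when $t > r$.

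For part 2), let $r,t$ be positive integers. Proposition \ref{27} gives $-r \in supp(RA_r)$, and $-r < 0$ because $r \geq 1$, while $supp(LA_t) \subseteq [0,t]$ contains only nonnegative integers. Hence $-r$ lies in $supp(RA_r)$ but not in $supp(LA_t)$, so $RA_r \not\cong LA_t$.

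I expect no real obstacle here: the work is all in Proposition \ref{27}, and what remains is the elementary observation that the intervals $[-t,0]$ and $[0,t]$ meet only in $\{0\}$, combined with the fact that $RA_t$ (resp.\ $LA_t$) genuinely attains the endpoint $-t$ (resp.\ $t$). The only points needing a little care are to use both halves of Proposition \ref{27} — the inclusion in $[-t,0]$ to bound the support and the inclusion of $\{-t\}$ to produce a witness — and to notice that the degenerate case $RA_0 = P$, with $supp(P) = \{0\}$, is already covered by these inequalities.
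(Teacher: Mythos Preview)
Your proof is correct and follows essentially the same approach as the paper: both arguments use Proposition~\ref{27} to find an integer (namely $-t$ or $t$) in the support of one object that falls outside the support interval of the other. The only cosmetic difference is that the paper phrases the key step in terms of the specific Hom-space $\Hom_{\mathcal{D}}(RA_r,\Sigma^{-r}S_P)$ rather than the support set, but the content is identical.
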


\begin{proof}
Assume that $r > t \geq 0$. Following Proposition \ref{27}, we have
that $${\rm Hom}_{\mathcal {D}}(RA_r, {\Sigma}^{-r}S_P) \neq 0,
\quad \mbox{while} \quad {\rm Hom}_{\mathcal {D}}(RA_t,
{\Sigma}^{-r}S_P) = 0.$$Thus, the objects $RA_r$ and $RA_t$ are not
isomorphic. Similarly for $LA_r$ and $LA_t$. Also in a similar way,
we can obtain the second statement.
\end{proof}

Combining Theorem \ref{5} with Proposition \ref{27}, we can deduce
the following corollary, which is analogous to Corollary 4.2 of
\cite{IR06}:

\begin{cor}\label{9} For any positive integer $l$, up to isomorphism, the object $M$
admits exactly $2l-1$ complements whose supports are contained in
$[1-l,l-1]$. These give rise to basic silting objects. They are the
indecomposable objects $RA_t$ and $LA_t$ for $0 \leq t < l$.
\end{cor}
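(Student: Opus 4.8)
The plan is to combine the two previously established facts: Theorem~\ref{5}, which classifies all basic silting objects containing $M$ as a direct summand, and Proposition~\ref{27}, which pins down the support of each $RA_t$ and $LA_t$. First I would recall that by definition a complement of $M$ is an indecomposable object $Y' \in (\mathrm{per}A)\setminus(\mathrm{add}\,M)$ with $M\oplus Y'$ silting; by Theorem~\ref{5} such a $Y'$ must be one of the objects $RA_t$ or $LA_t$ for some $t\geq 0$ (recalling $RA_0 = LA_0 = P$, so these two coincide). Hence the complements of $M$ are exactly drawn from the list $\{RA_t\}_{t\geq 0}\cup\{LA_t\}_{t\geq 1}$, and by Corollary~\ref{19} the only coincidence among these is $RA_0 = LA_0 = P$; all other members of the list are pairwise non-isomorphic. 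So the complements form a countable family indexed essentially by $\mathbb{Z}$.

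Next I would impose the support constraint. By Proposition~\ref{27}(1), $supp\,(RA_t)\subseteq[-t,0]$, so $supp\,(RA_t)\subseteq[1-l,l-1]$ holds as soon as $-t\geq 1-l$, i.e.\ $t\leq l-1$; and by the inclusion $\{-t\}\subseteq supp\,(RA_t)$, if $t\geq l$ then $-t\leq -l<1-l$, so $supp\,(RA_t)\not\subseteq[1-l,l-1]$. Thus among the right mutations exactly $RA_0,RA_1,\dots,RA_{l-1}$ have support inside $[1-l,l-1]$ — that is $l$ of them. Symmetrically, using Proposition~\ref{27}(2), exactly $LA_0,LA_1,\dots,LA_{l-1}$ have support inside $[1-l,l-1]$, another $l$ objects. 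Taking the union and removing the single overlap $RA_0 = LA_0 = P$, we obtain $2l - 1$ distinct complements of $M$ whose support lies in $[1-l,l-1]$, and each gives a basic silting object by Theorem~\ref{5}. This proves the count.

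The one point that needs a little care — and which I'd flag as the main (mild) obstacle — is verifying that \emph{every} complement with support in $[1-l,l-1]$ appears on this list, i.e.\ that the classification in Theorem~\ref{5} genuinely exhausts the complements of $M$ and that no $RA_t$ or $LA_t$ with $t\geq l$ can sneak back in with small support. The first is immediate from Theorem~\ref{5} (any basic silting object with $M$ as a summand is $M\oplus RA_t$ or $M\oplus LA_t$). The second is exactly the content of the containments $\{-t\}\subseteq supp\,(RA_t)$ and $\{t\}\subseteq supp\,(LA_t)$ from Proposition~\ref{27}: these lower bounds are what forbid large-index mutations from having bounded support, and they are the reason the count is finite. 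Once these are in place the argument is a bookkeeping exercise: list the $2l$ candidates $RA_0,\dots,RA_{l-1},LA_0,\dots,LA_{l-1}$, invoke Corollary~\ref{19} to see the only repetition is $RA_0=LA_0$, and conclude there are $2l-1$ of them.

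I would then close by observing that this is the direct higher analogue of \cite[Corollary 4.2]{IR06}, the point being that in our Calabi-Yau setting the silting mutations of Aihara-Iyama play the role that the tilting-complex constructions of Iyama-Reiten played there, and the $t$-structure estimates of Proposition~\ref{15} are what translate "iterated mutation" into "controlled support."
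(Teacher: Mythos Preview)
Your proposal is correct and follows essentially the same approach as the paper, which simply states that the corollary is obtained by combining Theorem~\ref{5} with Proposition~\ref{27}. You have spelled out the details faithfully: Theorem~\ref{5} gives the full list of complements, the support bounds from Proposition~\ref{27} pick out exactly those with $0\leq t<l$, and Corollary~\ref{19} (itself a consequence of Proposition~\ref{27}) ensures the only coincidence is $RA_0=LA_0$, yielding the count $2l-1$.
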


\vspace{.3cm}
\section{From silting objects to $m$-cluster tilting objects}

Let $\mathcal{F}$ be the full subcategory ${\mathcal{D}}^{{\leq}
{0}} \cap \, ^{\perp}{\mathcal{D}}^{{\leq} {-m-1}} \cap \, {\rm
per}A $ of $\mathcal {D}$. It is called the fundamental domain in
\cite{GUO}. Following Lemma \ref{24}, the category ${\mathcal
{D}}_{fd}(A)$ is a triangulated thick subcategory of per$A$. The
triangulated quotient category ${\mathcal{C}}_A =\, $
{per$A$}/{${\mathcal{D}}_{fd} (A)$} is called the generalized
$m$-cluster category \cite{GUO}. We denote by $\pi$ the canonical
projection functor from per$A$ to ${\mathcal{C}}_A$.

\begin{prop}[\cite{GUO}]\label{3}
Under Assumptions \ref{23}, the projection functor $\pi: {\rm per}A
\longrightarrow {\mathcal {C}}_A$ induces a $k$-linear equivalence
between $\mathcal {F}$ and ${\mathcal {C}}_A$.
\end{prop}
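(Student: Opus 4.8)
The plan is to verify that the functor $\pi|_{\mathcal F}\colon \mathcal F \to \mathcal C_A$ is fully faithful and essentially surjective. Recall the general setup: $\mathcal D_{fd}(A)$ is a thick subcategory of $\mathrm{per}A$ by Lemma \ref{24}, so the Verdier quotient $\mathcal C_A$ and the exact functor $\pi$ make sense, and $\mathcal D_{fd}(A)$ is the thick subcategory of $\mathrm{per}A$ generated by the simple $H^0A$-modules $S_1,\dots,S_n$ (d\'evissage along the standard $t$-structure). Two elementary observations will be used repeatedly: since $\mathrm{per}A$ is Hom-finite (Proposition \ref{4}), every object $Z$ of $\mathrm{per}A$ has homology which is finite dimensional in each degree and bounded above, so its truncation $\tau_{\geq 1}Z$ lies in $\mathcal D_{fd}(A)$; and $\mathrm{Hom}_{\mathcal D}(X,\tau_{\geq 1}N)=0$ whenever $X\in\mathcal D^{\leq 0}$. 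The main orthogonality input, obtained by combining the $(m+2)$-Calabi--Yau duality of Lemma \ref{24} with the standard $t$-structure, is: for $F\in\mathcal F$ and $N\in\mathcal D_{fd}(A)\cap\mathcal D^{\leq 0}$ one has $\mathrm{Hom}_{\mathcal D}(N,F)=0=\mathrm{Hom}_{\mathcal D}(N,\Sigma F)$, since $D\,\mathrm{Hom}_{\mathcal D}(N,\Sigma^iF)\cong \mathrm{Hom}_{\mathcal D}(F,\Sigma^{m+2-i}N)$ and, for $i\in\{0,1\}$, the object $\Sigma^{m+2-i}N$ lies in $\mathcal D^{\leq -m-1}$, to which $F$ is left orthogonal by definition of $\mathcal F$.

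For full faithfulness I would represent a morphism $\pi F\to\pi F'$ (with $F,F'\in\mathcal F$) by a roof $F\xleftarrow{s}Z\xrightarrow{f}F'$ with $\mathrm{cone}(s)\in\mathcal D_{fd}(A)$. Replacing $Z$ by $\tau_{\leq 0}Z$ --- a legitimate modification, as the cone of $\tau_{\leq 0}Z\to Z$ is $\tau_{\geq 1}Z\in\mathcal D_{fd}(A)$ --- we may assume $Z\in\mathcal D^{\leq 0}\cap\mathrm{per}A$, and then $\mathrm{cone}(s)\in\mathcal D^{\leq 0}\cap\mathcal D_{fd}(A)$ because $F$ and $\Sigma Z$ lie in $\mathcal D^{\leq 0}$. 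By the orthogonality above, $\mathrm{Hom}_{\mathcal D}(\mathrm{cone}(s),\Sigma F')=0$, so $f$ factors through $s$ and the morphism of $\mathcal C_A$ is the image of a morphism $F\to F'$; hence $\pi|_{\mathcal F}$ is full. If $\phi\colon F\to F'$ is killed by $\pi$, it factors through some $N\in\mathcal D_{fd}(A)$; the first factor $F\to N$ factors through $\tau_{\leq 0}N$ because $\mathrm{Hom}_{\mathcal D}(F,\tau_{\geq 1}N)=0$, and then $\mathrm{Hom}_{\mathcal D}(\tau_{\leq 0}N,F')=0$ by the orthogonality, so $\phi=0$; hence $\pi|_{\mathcal F}$ is faithful.

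For essential surjectivity, write an object of $\mathcal C_A$ as $\pi X$ with $X\in\mathrm{per}A$; truncating as above reduces to $X\in\mathcal D^{\leq 0}\cap\mathrm{per}A$. Using that $X$ is compact one checks that $\mathcal F$ consists exactly of those $X\in\mathcal D^{\leq 0}\cap\mathrm{per}A$ with $\mathrm{Hom}_{\mathcal D}(X,\Sigma^jS_i)=0$ for all $i$ and all $j\geq m+1$, and that only finitely many of the spaces $\mathrm{Hom}_{\mathcal D}(X,\Sigma^jS_i)$ with $j\geq m+1$ are non-zero, since $\mathrm{RHom}_A(X,S_i)$ is cohomologically bounded above. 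The idea is then to remove the obstruction degree by degree by modifying $X$ with finite dimensional objects: for the largest bad degree $j$ one picks a finite dimensional semisimple module $N$ and a morphism $X\to\Sigma^jN$ inducing a surjection onto $\bigoplus_i\mathrm{Hom}_{\mathcal D}(X,\Sigma^jS_i)$, replaces $X$ by the fibre of this morphism, and iterates; since the modifying objects lie in $\mathcal D_{fd}(A)$ the class of $\pi X$ is unchanged, and the process should terminate in $\mathcal F$.

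The step I expect to be the main obstacle is precisely this last one. A single modification as above does not annihilate $\mathrm{Hom}_{\mathcal D}(-,\Sigma^jS_i)$ but replaces it by a residual $\mathrm{Ext}^1_A(N,S_i)$-term arising from the fibre triangle, so one must organise the construction as a genuine universal co-extension that simultaneously absorbs these residues, or else argue directly that the right orthogonal of $\mathcal F$ inside $\mathcal D^{\leq 0}\cap\mathrm{per}A$ is contained in $\mathcal D_{fd}(A)$ and thereby produce an approximation triangle $F\to X\to Y\to\Sigma F$ with $F\in\mathcal F$ and $Y\in\mathcal D_{fd}(A)$. This is the analogue, in Calabi--Yau dimension $m+2$, of the argument of Amiot \cite{Am08} in the $2$-Calabi--Yau case, and the proof of the proposition then follows the same lines.
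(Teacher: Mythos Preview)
The paper does not prove this proposition; it is quoted from \cite{GUO}. Full faithfulness, however, is essentially Lemma~\ref{18} of the present paper (with $x=0$, $y=-m-1$), and your argument for it is correct and runs along the same lines.

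The genuine gap is essential surjectivity, and the obstruction is worse than the residual $\mathrm{Ext}^1$ at degree $j$ that you flag. Taking the fibre of the universal map $X\to\Sigma^jN$ at the \emph{largest} bad degree $j$ creates new obstructions at \emph{strictly higher} degrees: the fibre triangle gives $\mathrm{Hom}_{\mathcal D}(X',\Sigma^{j'}S_i)\cong \mathrm{Ext}^{\,j'-j+1}_{\mathcal D}(N,S_i)$ for every $j'>j$, and Calabi--Yau duality forces $\mathrm{Ext}^{m+2}_{\mathcal D}(S_i,S_i)\cong D\,\mathrm{End}_{\mathcal D}(S_i)\neq 0$, so $X'$ typically acquires a bad degree $j+m+1$. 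For the Ginzburg algebra of a point, $A=k[t]$ with $|t|=-m-1$, starting from $X=\Sigma^{m+1}A$ one step of your procedure returns $X'=\Sigma^{2(m+1)}A$ and the iteration diverges, while the correct lift $F=A$ is reached via $t\colon\Sigma^{m+1}A\to A$ with cone the simple $S$. Both of your fallback formulations fail on this same example: there is no nonzero map $A\to\Sigma^{m+1}A$, so no triangle $F\to X\to Y$ with $Y\in\mathcal D_{fd}(A)$ exists (the approximation one actually needs has the form $X\to F$); and the right orthogonal of $\mathcal F$ inside $\mathcal D^{\leq 0}\cap\mathrm{per}A$ contains all of $\mathcal D^{\leq -m-1}\cap\mathrm{per}A$ by the very definition of $\mathcal F$, in particular $\Sigma^{m+1}A$, so it is not contained in $\mathcal D_{fd}(A)$. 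You should consult \cite{GUO} (extending \cite{Am08}) for the inductive scheme that actually works.
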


\begin{thm}[\cite{GUO} Theorem 2.2, \cite{KY09} Theorem 7.21]\label{2}
If $A$ satisfies Assumptions \ref{23}, then

\begin{itemize}

\item[1)] the generalized $m$-cluster category ${\mathcal{C}}_A$ is {\rm Hom}-finite and
$(m+1)$-Calabi-Yau;

\item[2)] the object $T = {\pi} (A)$ is an {\em $m$-cluster
tilting object} in ${\mathcal {C}}_A$, i.e., $${\rm add}T = \{ L \in
{\mathcal {C}}_A | \, {\rm Hom}_{{\mathcal{C}}_A} (T, {\Sigma}^r L)
= 0, \, r = 1,\ldots,m \}.$$
\end{itemize}
\end{thm}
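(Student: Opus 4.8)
The plan is to follow the strategy of Amiot's $2$-Calabi--Yau construction \cite{Am08}, with the $3$-Calabi--Yau Serre duality there replaced by the $(m+2)$-duality of Lemma \ref{24}. I would first record the standard set-up: since $H^{p}A=0$ for $p>0$, the pair $(\mathcal{D}^{\le 0},\mathcal{D}^{\ge 1})$ is the standard $t$-structure on $\mathcal{D}=\mathcal{D}(A)$, it restricts to a bounded $t$-structure on $\mathcal{D}_{fd}(A)$ with heart $\mathrm{mod}\,H^{0}A$ and simple objects $S_{1},\dots,S_{n}$, and $\mathcal{D}_{fd}(A)\subseteq\mathrm{per}A$ is thick (Lemma \ref{24}). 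With this in hand, the $\mathrm{Hom}$-finiteness asserted in 1) is immediate: by Proposition \ref{3} the functor $\pi$ restricts to an equivalence between $\mathcal{F}$ and $\mathcal{C}_{A}$, so $\mathrm{Hom}_{\mathcal{C}_{A}}(\pi X,\pi Y)\cong\mathrm{Hom}_{\mathrm{per}A}(X,Y)$ for $X,Y\in\mathcal{F}$, and the latter is finite-dimensional by Proposition \ref{4}. A short check with the $t$-structure also shows $\Sigma^{r}A\in\mathcal{F}$ for $0\le r\le m$; as $\Sigma^{r}T=\pi(\Sigma^{r}A)$, Proposition \ref{3} then gives $\mathrm{Hom}_{\mathcal{C}_{A}}(T,\Sigma^{r}T)\cong\mathrm{Hom}_{\mathrm{per}A}(A,\Sigma^{r}A)=H^{r}A=0$ for $1\le r\le m$ by Assumption \ref{23}(b), which already yields the inclusion $\mathrm{add}\,T\subseteq\{L\in\mathcal{C}_{A}:\mathrm{Hom}_{\mathcal{C}_{A}}(T,\Sigma^{r}L)=0,\ r=1,\dots,m\}$ of the equality in 2).

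The heart of the matter is the $(m+1)$-Calabi--Yau property, that is, a bifunctorial isomorphism $D\,\mathrm{Hom}_{\mathcal{C}_{A}}(U,V)\simeq\mathrm{Hom}_{\mathcal{C}_{A}}(V,\Sigma^{m+1}U)$, and here I would imitate Amiot's argument. Lemma \ref{24} says that $\Sigma^{m+2}$ is a Serre functor of $\mathcal{D}_{fd}(A)$ relative to $\mathcal{D}(A)$, and the idea is that killing this $(m+2)$-Calabi--Yau subcategory in the Verdier quotient $\mathcal{C}_{A}=\mathrm{per}A/\mathcal{D}_{fd}(A)$ should lower the Calabi--Yau dimension by one. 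For $X,Y\in\mathcal{F}$ one side is free, $D\,\mathrm{Hom}_{\mathcal{C}_{A}}(\pi X,\pi Y)=D\,\mathrm{Hom}_{\mathrm{per}A}(X,Y)$ by Proposition \ref{3}; for the other I would compute $\mathrm{Hom}_{\mathcal{C}_{A}}(\pi Y,\pi\Sigma^{m+1}X)$ by the calculus of fractions, writing it as a filtered colimit of the groups $\mathrm{Hom}_{\mathcal{D}}(Y',\Sigma^{m+1}X)$ over approximations $Y'\to Y$ with cone in $\mathcal{D}_{fd}(A)$. The long exact sequences of the triangles $N\to Y'\to Y\to\Sigma N$ (with $N\in\mathcal{D}_{fd}(A)$), the duality $\mathrm{Hom}_{\mathcal{D}}(N,\Sigma^{m+1}X)\cong D\,\mathrm{Hom}_{\mathcal{D}}(X,\Sigma N)$ coming from Lemma \ref{24}, and the vanishing of $\mathrm{Hom}_{\mathcal{D}}(Y,\Sigma^{m+1}X)$ (as $Y\in{}^{\perp}\mathcal{D}^{\le -m-1}$ while $\Sigma^{m+1}X\in\mathcal{D}^{\le -m-1}$) reduce the problem to showing that this colimit stabilises, with limiting value $D\,\mathrm{Hom}_{\mathrm{per}A}(X,Y)$. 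That stabilisation is exactly where the hypothesis $X,Y\in\mathcal{F}$ enters (in particular, that objects of $\mathcal{F}$ have support contained in $[0,m]$), and I expect it to be the main obstacle of the whole proof; once it is done, matching the two sides and checking that the isomorphism is functorial and compatible with the suspensions (which $\pi$ preserves) is routine.

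With the Calabi--Yau property available, the reverse inclusion in 2) costs little. Given $L\in\mathcal{C}_{A}$ with $\mathrm{Hom}_{\mathcal{C}_{A}}(T,\Sigma^{r}L)=0$ for $r=1,\dots,m$, I would lift $L$ to some $\widetilde L\in\mathcal{F}$ (Proposition \ref{3}). By Calabi--Yau duality these vanishings amount to $\mathrm{Hom}_{\mathcal{C}_{A}}(L,\Sigma^{s}T)=0$ for $s=1,\dots,m$, and since $\widetilde L$ and all $\Sigma^{s}A$ with $0\le s\le m$ lie in $\mathcal{F}$, Proposition \ref{3} rewrites these as $\mathrm{Hom}_{\mathrm{per}A}(\widetilde L,\Sigma^{s}A)=0$ for $s=1,\dots,m$; as $\widetilde L\in{}^{\perp}\mathcal{D}^{\le -m-1}$ forces the same for $s\ge m+1$, we obtain $\mathrm{Hom}_{\mathrm{per}A}(\widetilde L,\Sigma^{s}A)=0$ for all $s\ge 1$. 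Now $A$ is a silting object (Theorem \ref{5}), and any $X\in\mathrm{per}A$ with $H^{p}X=0$ for $p\ge 1$ and $\mathrm{Hom}_{\mathrm{per}A}(X,\Sigma^{s}A)=0$ for all $s\ge 1$ belongs to $\mathrm{add}\,A$: indeed, these two conditions describe precisely the co-heart of the co-$t$-structure on $\mathrm{per}A$ associated with the silting object $A$ (cf.\ \cite{AI}). Since $\widetilde L\in\mathcal{F}\subseteq\mathcal{D}^{\le 0}\cap\mathrm{per}A$ satisfies both, $\widetilde L\in\mathrm{add}\,A$, hence $L=\pi\widetilde L\in\mathrm{add}(\pi A)=\mathrm{add}\,T$. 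This establishes the equality in 2) and finishes the proof.
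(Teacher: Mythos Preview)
The paper does not give its own proof of this theorem; it is imported from \cite{GUO} and \cite{KY09}. Your sketch reproduces the strategy of \cite{Am08} as extended in \cite{GUO}: Hom-finiteness via Proposition~\ref{3} and Proposition~\ref{4}, the $(m+1)$-Calabi--Yau property from the relative Serre duality of Lemma~\ref{24} through a calculus-of-fractions computation (with the stabilisation of the colimit over $\mathcal{D}_{fd}$-approximations correctly identified as the crux), and the forward inclusion in 2) by direct calculation with $\Sigma^{r}A\in\mathcal{F}$. All of this is sound.

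Your treatment of the reverse inclusion in 2) differs slightly from what \cite{GUO} does. There the argument proceeds by building, for each $X\in\mathcal{F}$, an explicit $(\mathrm{add}\,A)$-resolution inside $\mathcal{F}$ of length $m$ (iterated minimal approximations, as in \cite{Am08}); the vanishing of $\mathrm{Hom}_{\mathcal{C}_A}(T,\Sigma^{r}L)$ is then fed through these triangles to force $L\in\mathrm{add}\,T$. You instead dualise via the Calabi--Yau property to obtain $\mathrm{Hom}_{\mathrm{per}A}(\widetilde L,\Sigma^{s}A)=0$ for all $s\ge 1$ and invoke that $\{X\in\mathrm{per}A:H^{>0}X=0\text{ and }\mathrm{Hom}(X,\Sigma^{>0}A)=0\}=\mathrm{add}\,A$. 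This identity is correct---it says that the two aisles of the bounded co-$t$-structure attached to the silting object $A$ are characterised by Hom-orthogonality to shifts of the co-heart---but it is not entirely formal: one needs a short weight-decomposition argument using boundedness (if $X\to\sigma_{\le -1}X$ vanishes, then $\sigma_{\le -1}X$ is a summand of $\Sigma\sigma_{\ge 0}X$, forcing $X\in\mathrm{per}A_{\ge 0}$, and dually). A precise reference is Mendoza--S\'aenz--Santiago--Souto Salorio or Koenig--Yang rather than \cite{AI}, though the statement is standard. With that in hand, your route is a clean and slightly more conceptual alternative to the hands-on resolution argument of \cite{GUO}.
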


\begin{thm} \label{25}
The image of any silting object under the projection functor $\pi:
{\rm per}A \ra {\mathcal {C}}_A$ is an $m$-cluster tilting object in
${\mathcal {C}}_A$.
\end{thm}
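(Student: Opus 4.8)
The plan is to show that $\pi(X)$ satisfies the characterisation of $m$-cluster tilting objects given in Theorem~\ref{2}(2), namely that $\mathrm{add}\,\pi(X)$ coincides with $\{L \in {\mathcal{C}}_A \mid {\rm Hom}_{{\mathcal{C}}_A}(\pi(X), {\Sigma}^r L) = 0,\ r = 1,\ldots,m\}$. By Theorem~\ref{5}, any basic silting object containing $M$ as a summand is of the form $M \oplus RA_t$ or $M \oplus LA_t$ for some $t \geq 0$; since every silting object is (up to the usual reductions) obtained from $A = M \oplus P$ by iterated silting mutation, and silting mutation can be performed at each indecomposable summand in turn, it suffices to treat one mutation step and then argue by induction. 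So the first step is to reduce, via Theorem~\ref{5} and induction on the number of mutation steps, to proving the statement for $X = M \oplus RA_t$ (the case $M \oplus LA_t$ being dual), with $M \oplus RA_{t-1}$ already known to map to an $m$-cluster tilting object.

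The key input is the control on supports and $t$-structure position from Proposition~\ref{15} and Proposition~\ref{27}. First I would verify that $\pi(X)$ lies in (a shift-translate of) the fundamental domain $\mathcal{F}$, or more precisely that one can compute $\mathrm{Hom}_{{\mathcal{C}}_A}$ between $\pi(X)$ and its shifts using the derived category: by Proposition~\ref{3} and the description of morphisms in the cluster category as a colimit, $\mathrm{Hom}_{{\mathcal{C}}_A}(\pi(X), {\Sigma}^r \pi(X))$ is built from $\mathrm{Hom}_{\mathcal{D}}(X, {\Sigma}^r X)$ together with correction terms coming from ${\mathcal{D}}_{fd}(A)$ governed by the Calabi-Yau duality of Lemma~\ref{24}. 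Since $X$ is silting, $\mathrm{Hom}_{\mathcal{D}}(X,{\Sigma}^r X) = 0$ for $r > 0$; the $(m{+}2)$-Calabi-Yau property then kills the contributions in degrees $1,\ldots,m$ because the dual terms $\mathrm{Hom}_{\mathcal{D}}(X, {\Sigma}^{m+2-r}X)$ also vanish for $r = 1,\ldots,m$ (as $m+2-r \geq 2 > 0$). This gives the vanishing $\mathrm{Hom}_{{\mathcal{C}}_A}(\pi(X), {\Sigma}^r\pi(X)) = 0$ for $r = 1,\ldots,m$, so $\pi(X) \in \{L \mid {\rm Hom}_{{\mathcal{C}}_A}(\pi(X),{\Sigma}^r L) = 0\}$; hence $\mathrm{add}\,\pi(X)$ is contained in that subcategory, and $\pi(X)$ is \emph{rigid}.

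For the reverse inclusion — that $\pi(X)$ is maximal with this property, i.e.\ genuinely $m$-cluster tilting — I would use that $X$ generates $\mathrm{per}A$ as a thick subcategory (being silting), so $\pi(X)$ generates ${\mathcal{C}}_A$; then invoke a standard argument (as in the proof of Theorem~\ref{2}(2) in \cite{GUO}, or via Iyama-Yoshino reduction) that a rigid generating object in a Hom-finite $(m+1)$-Calabi-Yau Krull-Schmidt triangulated category whose shifts-of-summands satisfy the vanishing is already $m$-cluster tilting. Concretely, if $L \in {\mathcal{C}}_A$ satisfies $\mathrm{Hom}_{{\mathcal{C}}_A}(\pi(X), {\Sigma}^r L) = 0$ for $r = 1,\ldots,m$, one lifts $L$ to the fundamental domain $\mathcal{F}$ via Proposition~\ref{3}, and uses the approximation triangles built from $X$ together with the Calabi-Yau duality to show $L \in \mathrm{add}\,\pi(X)$. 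The main obstacle I expect is precisely this converse direction: transporting the "maximality" half of the $m$-cluster tilting condition across the quotient functor $\pi$, since a priori the Hom-spaces in ${\mathcal{C}}_A$ differ from those in $\mathrm{per}A$, and one must ensure no spurious objects satisfying the vanishing condition appear in the quotient that were not already in $\mathrm{add}\,X$ upstairs — this is where the precise form of the fundamental domain and the $(m+2)$-Calabi-Yau property of ${\mathcal{D}}_{fd}(A)$ must be used carefully, likely by mimicking Amiot's and the author's original arguments for $T = \pi(A)$ with $A$ replaced by the general silting object $X$.
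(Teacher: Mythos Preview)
Your approach has two genuine gaps, and the paper takes a completely different (and much shorter) route.

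First, the reduction step is unjustified. Theorem~\ref{5} classifies only those basic silting objects that contain the \emph{fixed} almost complete object $M$ as a direct summand; it does not assert that an arbitrary silting object in $\mathrm{per}\,A$ is reachable from $A$ by iterated silting mutation. Silting connectedness is a separate and in general nontrivial question, and the paper neither proves nor assumes it. So your induction on the number of mutation steps never gets off the ground for a general silting object.

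Second, even granting the reduction, your direct computation of $\mathrm{Hom}_{{\mathcal{C}}_A}(\pi(X),\Sigma^r\pi(X))$ via ``correction terms governed by Calabi--Yau duality'' is not available in the form you need. The relevant tool (the long exact sequence of Proposition~\ref{14}, or the isomorphism of Lemma~\ref{18}) requires $X$ to sit in a controlled position relative to the $t$-structure, essentially in the fundamental domain $\mathcal{F}$ or a single shift thereof. But a general silting object need not lie in any shift of $\mathcal{F}$: already $M\oplus RA_t$ for $t\geq 1$ has summands in $\mathcal{D}^{\leq 0}$ and in $\mathcal{D}^{\leq t}\cap{}^{\perp}\mathcal{D}^{\leq -1}$ simultaneously (Proposition~\ref{15}), so it does not sit in one copy of $\mathcal{F}$. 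You would have to rebuild the fundamental-domain analysis relative to $X$ from scratch, and the maximality half---which you yourself flag as the obstacle---remains unaddressed.

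The paper's proof bypasses all of this with a single Morita-equivalence step. Take $Z$ cofibrant and set $\Gamma=\mathrm{Hom}^\bullet_A(Z,Z)$. Because $Z$ is silting, $\Gamma$ has homology concentrated in nonpositive degrees and finite-dimensional $H^0$; because $Z$ is a compact generator, $-\overset{L}{\otimes}_\Gamma Z:\mathcal{D}(\Gamma)\to\mathcal{D}(A)$ is a triangle equivalence sending $\Gamma\mapsto Z$, so $\Gamma$ inherits homological smoothness and the $(m{+}2)$-Calabi--Yau property. Hence $\Gamma$ satisfies Assumptions~\ref{23}, the generalized $m$-cluster category $\mathcal{C}_\Gamma$ is defined, and the induced equivalence $\mathcal{C}_\Gamma\to\mathcal{C}_A$ sends $\pi(\Gamma)\mapsto\pi(Z)$. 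Now Theorem~\ref{2} applied to $\Gamma$ says $\pi(\Gamma)$ is $m$-cluster tilting in $\mathcal{C}_\Gamma$, hence $\pi(Z)$ is $m$-cluster tilting in $\mathcal{C}_A$. No induction, no direct Hom computation in the quotient, and the maximality comes for free from the already-proved case $\pi(\Gamma)$.
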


\begin{proof}
Assume that $Z$ is an arbitrary silting object in per$A$. Without
loss of generality, we can assume that $Z$ is a cofibrant dg
$A$-module \cite{Ke94}. We denote by $\Gamma$ the dg endomorphism
algebra ${\rm Hom}_A^{\bullet} (Z,Z)$. Since the spaces ${\rm
Hom}_{\mathcal {D}} (Z, {\Sigma}^i Z)$ are zero for all positive
integers $i$, the dg algebra ${\Gamma}$ has its homology
concentrated in nonpositive degrees. The zeroth homology of
${\Gamma}$ is isomorphic to the space ${\rm Hom}_{\mathcal {D}}
(Z,Z)$ which is finite dimensional by Proposition \ref{4}.

Since $Z$ is a compact generator of ${\mathcal {D}}$, the left
derived functor $F = - \overset{L}{\otimes}_{{\Gamma}}Z
$ is a Morita equivalence \cite{Ke94} from ${\mathcal {D}}(\Gamma)$
to $\mathcal {D}$ which sends ${\Gamma}$ to $Z$. Therefore, the dg
algebra ${\Gamma}$ is also (topologically) homologically smooth and
$(m+2)$-Calabi-Yau. Thus, the generalized $m$-cluster category
${\mathcal {C}}_{\Gamma}$ is well defined. The equivalence $F$ also
induces a triangle equivalence from the generalized $m$-cluster
category ${\mathcal {C}}_{{\Gamma}}$ to ${\mathcal {C}}_A$ which
sends $\pi({\Gamma})$ to $\pi(Z)$. By Theorem \ref{2}, the image
$\pi({\Gamma})$ is an $m$-cluster tilting object in ${\mathcal
{C}}_{{\Gamma}}$. Hence, the image of $Z$ is an $m$-cluster tilting
object in ${\mathcal {C}}_A$.
\end{proof}

In particular, for each nonnegative integer $t$, the images of
$LA_t \oplus M$ and $RA_t \oplus M$ in the generalized $m$-cluster
category ${\mathcal {C}}_A$ are $m$-cluster tilting objects.

\begin{defns}Let $A$ be a dg algebra satisfying Assumptions \ref{23} and ${\mathcal
{C}}_A$ its generalized $m$-cluster category.
\begin{itemize}
\item[a)] An object $X$ in ${\mathcal {C}}_A$ is called an {\em almost complete $m$-cluster tilting
object} if there exists some indecomposable object $X'$ in
${\mathcal {C}}_A \setminus$ ({\rm add}X) such that $X \oplus X'$ is
an $m$-cluster tilting object. Here $X'$ is called a {\em
complement} of $X$. In particular, we call $\pi(M)$ an {\em almost
complete $m$-cluster tilting $P$-object}.
\item[b)] An almost complete $m$-cluster tilting object $Y$ is said to
be {\em liftable} if there exists a basic silting object $Z$ in
per$A$ such the $\pi(Z/Z')$ is isomorphic to $Y$ for some
indecomposable direct summand $Z'$ of $Z$.
\end{itemize}
\end{defns}

\begin{prop}\label{39}
Let $A$ be a $3$-Calabi-Yau dg algebra satisfying Assumptions 2.1.
Then any $(1-)$cluster tilting object in ${\mathcal {C}}_A$ is
induced by a silting object in $\mathcal {F}$ under the canonical
projection $\pi$.
\end{prop}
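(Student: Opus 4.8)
The plan is to move the question into the fundamental domain using Proposition~\ref{3} and then to check the silting axioms there. Let $X$ be a basic cluster tilting object of $\mathcal{C}_A$; here $m=1$, so $\mathcal{C}_A$ is $2$-Calabi--Yau. By Proposition~\ref{3} there is a unique object $\widetilde{X}\in\mathcal{F}$ with $\pi(\widetilde{X})\cong X$, and it is enough to show that $\widetilde{X}$ is a silting object of $\mathrm{per}A$: it then lies in $\mathcal{F}$ by construction and induces $X$. Recall that for $m=1$ one has $\mathcal{F}=\mathcal{D}^{\leq 0}\cap{}^{\perp}\mathcal{D}^{\leq -2}\cap\mathrm{per}A$.

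The first step is the presilting property $\mathrm{Hom}_{\mathcal{D}}(\widetilde{X},\Sigma^{i}\widetilde{X})=0$ for every $i>0$. For $i\geq 2$ this is automatic from membership in $\mathcal{F}$: $\widetilde{X}\in\mathcal{D}^{\leq 0}$ forces $\Sigma^{i}\widetilde{X}\in\mathcal{D}^{\leq -i}\subseteq\mathcal{D}^{\leq -2}$, whereas $\widetilde{X}\in{}^{\perp}\mathcal{D}^{\leq -2}$. For $i=1$ I would invoke the short exact sequence of Amiot \cite{Am08} comparing extension spaces in the $2$-Calabi--Yau generalized cluster category with those in the derived category; applied to objects of $\mathcal{F}$ it exhibits, together with the $2$-Calabi--Yau symmetry, the space $\mathrm{Hom}_{\mathcal{D}}(\widetilde{X},\Sigma\widetilde{X})$ as a subspace of $\mathrm{Hom}_{\mathcal{C}_A}(X,\Sigma X)$, and the latter vanishes because $X$ is cluster tilting. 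The same sequence, with the case $i=2$, also yields $\mathrm{End}_{\mathcal{D}}(\widetilde{X})\cong\mathrm{End}_{\mathcal{C}_A}(X)$, which is finite-dimensional. Thus $\widetilde{X}$ is a presilting object of $\mathrm{per}A$.

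It remains to show that $\widetilde{X}$ generates $\mathrm{per}A$ as a thick subcategory, and this is the step I expect to be the main obstacle. Two soft reductions help. Since $X$ is cluster tilting, a right $\mathrm{add}\,X$-approximation of an arbitrary object $Z$ of $\mathcal{C}_A$ yields a triangle $X_1\to X_0\to Z\to\Sigma X_1$ with $X_0,X_1\in\mathrm{add}\,X$, so $X$ generates $\mathcal{C}_A$ even as a triangulated subcategory; using the bijection between thick subcategories of $\mathrm{per}A$ containing $\mathcal{D}_{fd}(A)$ and thick subcategories of the Verdier quotient $\mathcal{C}_A$, one obtains $\mathrm{thick}\bigl(\widetilde{X}\cup\mathcal{D}_{fd}(A)\bigr)=\mathrm{per}A$, so it suffices to prove $\mathcal{D}_{fd}(A)\subseteq\mathrm{thick}(\widetilde{X})$, i.e. that every simple module $S_i$ lies in $\mathrm{thick}(\widetilde{X})$. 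Moreover, via $\mathcal{F}\simeq\mathcal{C}_A$ the object $\widetilde{X}$ has exactly $n$ pairwise non-isomorphic indecomposable summands, and by the theory of indices in $2$-Calabi--Yau categories (Dehy--Keller, Palu) their classes $[\widetilde{X}_1],\dots,[\widetilde{X}_n]$ form a $\mathbb{Z}$-basis of $K_0(\mathrm{per}A)\cong\mathbb{Z}^{n}$; pairing with the dual basis $\{[S_i]\}$ via the Euler form shows in particular that $\mathrm{RHom}_A(\widetilde{X},S_i)\neq 0$ for every $i$. So $\widetilde{X}$ is a two-term presilting object of $\mathrm{per}A$ of maximal rank.

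The hard part is to upgrade this to a silting object. I would argue either along the lines of the $\tau$-tilting theory of Adachi--Iyama--Reiten, that a two-term presilting object of maximal rank is automatically silting, or, equivalently, by realizing the cluster-tilted algebra $\mathrm{End}_{\mathcal{C}_A}(X)$ as the zeroth homology of an $(m+2)$-Calabi--Yau dg algebra quasi-isomorphic to $\mathrm{RHom}_A(\widetilde{X},\widetilde{X})$ --- Amiot's recognition theory for $2$-Calabi--Yau categories --- and transferring silting-ness through the resulting derived equivalence, which forces $\mathcal{D}_{fd}(A)\subseteq\mathrm{thick}(\widetilde{X})$. Either way the argument is special to the $2$-Calabi--Yau case, which is exactly why the paper states the higher analogue only as a conjecture. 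Once $\widetilde{X}$ is known to be silting the proposition follows; conversely, by Theorem~\ref{25}, $\pi$ then restricts to a bijection between the basic silting objects lying in $\mathcal{F}$ and the basic cluster tilting objects of $\mathcal{C}_A$.
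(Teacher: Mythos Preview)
Your presilting argument is correct and is exactly what the paper does: lift $X$ to $\widetilde X\in\mathcal F$ via Proposition~\ref{3}, kill $\mathrm{Hom}_{\mathcal D}(\widetilde X,\Sigma^i\widetilde X)$ for $i\geq 2$ by the shape of $\mathcal F$, and kill $i=1$ using Amiot's short exact sequence.

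The generation step is where your proposal diverges, and where it has a genuine gap. You reduce to showing $\mathcal D_{fd}(A)\subseteq\mathrm{thick}(\widetilde X)$ and then appeal either to the Adachi--Iyama--Reiten result that a two-term presilting object of maximal rank is silting, or to Amiot's recognition theorem. Neither of these is justified as stated: you have not shown that $\widetilde X$ is two-term with respect to the silting object $A$ (true, but it needs an argument via the canonical weight structure), and the AIR result is formulated for finite-dimensional algebras, not for $\mathrm{per}A$ with $A$ a dg algebra, so a transfer argument is required. The second route via ``realizing the cluster-tilted algebra as $H^0$ of an $(m+2)$-CY dg algebra and transferring silting-ness through a derived equivalence'' is circular: to get that derived equivalence you already need $\widetilde X$ to generate $\mathrm{per}A$.

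The paper's argument for generation is direct and avoids all of this machinery. One approximates $A$ itself, not the simples: take a minimal left $(\mathrm{add}\,\widetilde X)$-approximation $f\colon A\to Z_0$ in $\mathcal D$ and form the triangle
\[
A \xrightarrow{\ f\ } Z_0 \longrightarrow Y \longrightarrow \Sigma A.
\]
Since $A\in{}^{\perp}\mathcal D^{\leq -1}$ and $Z_0\in\mathcal F$, one checks that $Y\in\mathcal F$ as well. By Proposition~\ref{3} this triangle descends to $\mathcal C_A$, and $\pi(f)$ is still a left $(\mathrm{add}\,X)$-approximation there. Applying $\mathrm{Hom}_{\mathcal C_A}(-,X)$ and using that $X$ is cluster tilting gives $\mathrm{Hom}_{\mathcal C_A}(Y,\Sigma X)=0$, whence $\pi(Y)\in\mathrm{add}\,X$; since both $Y$ and $\widetilde X$ lie in $\mathcal F$, this lifts to $Y\in\mathrm{add}\,\widetilde X$ in $\mathcal D$. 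The triangle then exhibits $A\in\mathrm{thick}(\widetilde X)$, and generation follows. This is an elementary approximation argument; no $K$-theory, indices, or $\tau$-tilting are needed.
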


\begin{proof}
Let $T$ be a cluster tilting object in ${\mathcal {C}}_A$. By
Proposition \ref{3}, we know that there exists an object $Z$ in the
fundamental domain $\mathcal {F}$ such that $\pi(Z) = T$.

First we will claim that $Z$ is a partial silting object, that is,
the spaces ${\rm Hom}_{\mathcal {D}}(Z,{\Sigma}^iZ)$ are zero for
all positive integers $i$. Since $Z$ belongs to $\mathcal {F}$,
clearly these spaces vanish for all integers $i \geq 2$. Consider
the case $i = 1$. The following short exact sequence $$0 \ra {\rm
Ext}^1_{\mathcal {D}}(X,Y) \ra {\rm Ext}^1_{{\mathcal {C}}_A}(X,Y)
\ra D{\rm Ext}^1_{\mathcal {D}}(Y,X) \ra 0$$ was shown to exist in
\cite{Am08} for any objects $X,\,Y$ in $\mathcal {F}$. We specialize
both $X$ and $Y$ to the object $Z$. The middle term in the short
exact sequence is zero since $T$ is a cluster tilting object. Thus,
the object $Z$ is partial silting.

Second we will show that $Z$ generates per$A$. Consider the
following triangle $$A \stackrel{f}\rightarrow Z_0 \rightarrow Y
\rightarrow {\Sigma}A$$ in $\mathcal {D}$, where $f$ is a minimal
left (add$Z$)-approximation in $\mathcal {D}$. It is easy to see
that $Y$ also belongs to $\mathcal {F}$. Therefore, the above
triangle can be viewed as a triangle in ${\mathcal {C}}_A$ with $f$
a minimal left (add$Z$)-approximation in ${\mathcal {C}}_A$.
Applying the functor ${\rm Hom}_{{\mathcal {C}}_A}(-,Z)$ to the
triangle, we get the exact sequence
$${\rm Hom}_{{\mathcal {C}}_A}(Z_0,Z) \rightarrow {\rm
Hom}_{{\mathcal {C}}_A}(A,Z) \rightarrow {\rm Hom}_{{\mathcal
{C}}_A}({\Sigma}^{-1}Y,Z) \rightarrow {\rm Hom}_{{\mathcal
{C}}_A}({\Sigma}^{-1}Z_0,Z)$$Therefore the space ${\rm
Hom}_{{\mathcal {C}}_A}(Y,{\Sigma}Z)$ becomes zero. As a
consequence, $Y$ belongs to add$Z$ in ${\mathcal {C}}_A$. Since both
$Y$ and $Z$ are in $\mathcal {F}$, the object $Y$ also belongs to
add$Z$ in $\mathcal {D}$. Therefore, the dg algebra $A$ belongs to
the subcategory thick$Z$ of per$A$. It follows that $Z$ generates
per$A$.
\end{proof}

\begin{thm}\label{20}
The almost complete $m$-cluster tilting $P$-object $\pi(M)$ has at
least $m+1$ complements in ${\mathcal {C}}_A$.
\end{thm}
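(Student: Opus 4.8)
The plan is to exhibit $m+1$ pairwise non-isomorphic complements of $\pi(M)$ in ${\mathcal{C}}_A$ by using the silting mutations already at hand. By Theorem~\ref{25} (or rather the remark following it), for every nonnegative integer $t$ the images $\pi(RA_t \oplus M)$ and $\pi(LA_t \oplus M)$ are $m$-cluster tilting objects in ${\mathcal{C}}_A$; hence each $\pi(RA_t)$ and each $\pi(LA_t)$ is \emph{a priori} a candidate complement of $\pi(M)$, provided it is indecomposable and does not lie in $\mathrm{add}\,\pi(M)$. So the natural candidates are the $m+1$ objects
\[
\pi(RA_0) \;=\; \pi(LA_0),\quad \pi(RA_1),\quad \ldots,\quad \pi(RA_m),
\]
i.e. $P = RA_0$ together with its first $m$ right mutations. (One could equally take $\pi(LA_0),\ldots,\pi(LA_m)$, or any mix; the point is to produce $m+1$ of them that survive as distinct complements.) First I would record that each of these is indecomposable in ${\mathcal{C}}_A$: by Proposition~\ref{15} all of $RA_0,\ldots,RA_m$ lie in the fundamental domain $\mathcal{F} = {\mathcal{D}}^{\leq 0} \cap {}^{\perp}{\mathcal{D}}^{\leq -m-1} \cap \mathrm{per}\,A$ (since $RA_t \in {\mathcal{D}}^{\leq t} \subseteq {\mathcal{D}}^{\leq 0}$... wait, that needs $t \le 0$), so I would instead note $RA_t \in {}^\perp{\mathcal{D}}^{\leq -1} \subseteq {}^\perp{\mathcal{D}}^{\leq -m-1}$ and handle the ${\mathcal{D}}^{\leq 0}$ condition separately; in any case $RA_t$ is indecomposable in $\mathcal{D}$ and does not belong to $\mathrm{add}\,M$, and Proposition~\ref{3} identifies $\mathcal{F}$ with ${\mathcal{C}}_A$, so indecomposability and non-membership in $\mathrm{add}\,\pi(M)$ will follow once the objects are placed inside $\mathcal{F}$.

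\textbf{Main steps.} Step one: show $RA_t \in \mathcal{F}$ for $0 \le t \le m$. The condition $RA_t \in {}^\perp{\mathcal{D}}^{\leq -m-1}$ is immediate from Proposition~\ref{15} since ${}^\perp{\mathcal{D}}^{\leq -1} \subseteq {}^\perp{\mathcal{D}}^{\leq -m-1}$. The condition $RA_t \in {\mathcal{D}}^{\leq 0}$ does \emph{not} follow directly (Proposition~\ref{15} only gives ${\mathcal{D}}^{\leq t}$), so here I would instead use the support computation of Proposition~\ref{27}(1): $supp(RA_t) \subseteq [-t,0]$, together with the cofibrant-replacement/truncation description of $\mathcal{F}$ from \cite{GUO}; alternatively, one argues directly that the higher homology of $RA_t$ vanishes through degree... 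Actually the cleanest route is: an object $X$ of $\mathrm{per}\,A$ lies in $\mathcal{F}$ iff $\mathrm{Hom}_{\mathcal{D}}(X,\Sigma^j S_i) = 0$ for all $i$ and all $j \notin [0,m]$, i.e. iff $supp(X) \subseteq [0,m]$ --- but for $RA_t$ we have $supp(RA_t) \subseteq [-t,0]$, which is contained in $[0,m]$ only when $t=0$. This tension is the crux and is addressed in the next step. Step two: replace $RA_t$, which lives ``to the left'', by an isomorphic-in-${\mathcal{C}}_A$ shift that lands in $\mathcal{F}$; concretely, the projection $\pi$ kills ${\mathcal{D}}_{fd}(A)$, and the right mutation triangles $RA_t \to A^{(t)} \to RA_{t-1} \to \Sigma RA_t$ show that modulo ${\mathcal{D}}_{fd}$ the object $\pi(RA_t)$ is a shift/cone built from $\pi(A)$, so one reads off its position in the $(m+1)$-periodic fundamental domain. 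Step three: use Corollary~\ref{19}(1) to conclude the $m+1$ objects $\pi(RA_0),\ldots,\pi(RA_m)$ are pairwise non-isomorphic --- here I must be careful, because Corollary~\ref{19} gives non-isomorphism in $\mathrm{per}\,A$, and passing to the quotient ${\mathcal{C}}_A$ could identify some of them; so the real content of step three is to show $\pi(RA_r) \not\cong \pi(RA_t)$ in ${\mathcal{C}}_A$ for $0 \le r < t \le m$, which I would do by showing each $\pi(RA_t)$ lies in $\mathcal{F}$ (via steps one--two) and that the support invariant distinguishes them, now computed inside ${\mathcal{C}}_A$ using the $(m+1)$-Calabi--Yau property from Theorem~\ref{2}(1).

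\textbf{The hard part.} The main obstacle is precisely the interplay between ``support to the left'' for right mutations and the shape of the fundamental domain $\mathcal{F} \simeq {\mathcal{C}}_A$: $RA_t$ for $t \ge 1$ has support in $[-t,0]$, which pokes out the \emph{wrong} side of the interval $[0,m]$, so one cannot naively assert $RA_t \in \mathcal{F}$. The resolution I anticipate is to work with the \emph{left} mutations $LA_0 = P, LA_1, \ldots, LA_m$ instead, since by Proposition~\ref{27}(2) these satisfy $supp(LA_t) \subseteq [0,t] \subseteq [0,m]$ and by Proposition~\ref{15} they lie in ${\mathcal{D}}^{\leq 0} \cap {}^\perp{\mathcal{D}}^{\leq -t-1} \subseteq {\mathcal{D}}^{\leq 0} \cap {}^\perp{\mathcal{D}}^{\leq -m-1} = $ the first two defining conditions of $\mathcal{F}$, hence $LA_t \in \mathcal{F}$ for all $0 \le t \le m$. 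Then each $\pi(LA_t)$ is genuinely indecomposable and outside $\mathrm{add}\,\pi(M)$ (by Proposition~\ref{3}), each $\pi(LA_t \oplus M)$ is $m$-cluster tilting by Theorem~\ref{25}, so each $\pi(LA_t)$ is a complement of $\pi(M)$; and since $LA_0, \ldots, LA_m$ all lie in $\mathcal{F}$ where $\pi$ is fully faithful, Corollary~\ref{19}(1) gives that $\pi(LA_0), \ldots, \pi(LA_m)$ are pairwise non-isomorphic. That produces the required $m+1$ complements, completing the proof.
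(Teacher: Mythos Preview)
Your final argument is correct and coincides with the paper's proof: use the left mutations $LA_0,\ldots,LA_m$, which by Proposition~\ref{15} lie in $\mathcal{F}$, then invoke Proposition~\ref{3} and Corollary~\ref{19} to conclude that $\pi(LA_0),\ldots,\pi(LA_m)$ are $m+1$ pairwise non-isomorphic indecomposable complements of $\pi(M)$. The long detour through $RA_t$ is unnecessary---the paper goes straight to $LA_t$ for exactly the reason you identify (support on the correct side), so in a clean write-up you should drop Steps one--two entirely and begin with the observation in your ``hard part''.
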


\begin{proof}
Following Proposition \ref{15} and Corollary \ref{19}, the pairwise
non isomorphic indecomposable objects $LA_t \, (0 \leq t \leq m)$
belong to the fundamental domain $\mathcal {F}$. Therefore, by
Proposition \ref{3}, the $m+1$ objects $\pi(P),\,\pi(LA_1),\dots ,
\pi(LA_m)$ are indecomposable and pairwise non isomorphic in
${\mathcal {C}}_A$. It follows that $\pi(M)$ has at least $m+1$
complements in ${\mathcal {C}}_A$.
\end{proof}

Let us generalize the above theorem:

\begin{thm} \label{26}
Each liftable almost complete $m$-cluster tilting object has at
least $m+1$ complements in ${\mathcal {C}}_A$.
\end{thm}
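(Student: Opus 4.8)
The plan is to reduce the general statement to the already-established case of Theorem~\ref{20} by a Morita-type argument, exactly in the spirit of the proof of Theorem~\ref{25}. Suppose $Y$ is a liftable almost complete $m$-cluster tilting object in ${\mathcal{C}}_A$. By definition, there is a basic silting object $Z$ in $\mathrm{per}A$ and an indecomposable direct summand $Z'$ of $Z$ such that $\pi(Z/Z') \simeq Y$. Write $Z = Z' \oplus \overline{Z}$ with $\overline{Z} = Z/Z'$, so $Y \simeq \pi(\overline{Z})$. We may assume $Z$ is a cofibrant dg $A$-module; set $\Gamma = \mathrm{Hom}_A^{\bullet}(Z,Z)$. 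As in the proof of Theorem~\ref{25}, $\Gamma$ has homology concentrated in nonpositive degrees with finite-dimensional $H^0$, and the left derived functor $F = - \overset{L}{\otimes}_{\Gamma} Z$ is a Morita equivalence ${\mathcal{D}}(\Gamma) \to {\mathcal{D}}(A)$ sending $\Gamma$ to $Z$, under which $\Gamma$ is again (topologically) homologically smooth and $(m+2)$-Calabi-Yau. Hence $\Gamma$ satisfies Assumptions~\ref{23}, its generalized $m$-cluster category ${\mathcal{C}}_\Gamma$ is defined, and $F$ induces a triangle equivalence $\bar{F}: {\mathcal{C}}_\Gamma \to {\mathcal{C}}_A$ with $\bar{F}(\pi(\Gamma)) \simeq \pi(Z)$.

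Next I would identify $Z'$ with the $P$-indecomposable summand on the $\Gamma$-side. Since $Z$ is basic and $Z'$ is an indecomposable summand, $Z'$ corresponds under $F$ to a direct summand of $\Gamma$ of the form $e\Gamma$ for a primitive idempotent $e$ of $H^0\Gamma$ (here one uses that $H^0\Gamma = \mathrm{End}_{{\mathcal{D}}(A)}(Z)$ is basic because $Z$ is basic, so its primitive idempotents are in bijection with the indecomposable summands of $Z$). Thus, in the notation of Section~2 applied to the dg algebra $\Gamma$, we may take $P_\Gamma = e\Gamma$ and $M_\Gamma = (1-e)\Gamma$, and then $F(M_\Gamma) \simeq \overline{Z}$ in ${\mathcal{D}}(A)$, whence $\bar{F}(\pi(M_\Gamma)) \simeq \pi(\overline{Z}) \simeq Y$. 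In other words, the liftable almost complete $m$-cluster tilting object $Y$ is, up to the triangle equivalence $\bar{F}$, precisely the almost complete $m$-cluster tilting $P$-object $\pi(M_\Gamma)$ in ${\mathcal{C}}_\Gamma$.

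Now I apply Theorem~\ref{20} to the dg algebra $\Gamma$: the almost complete $m$-cluster tilting $P$-object $\pi(M_\Gamma)$ has at least $m+1$ complements in ${\mathcal{C}}_\Gamma$, namely the pairwise non-isomorphic indecomposables $\pi(P_\Gamma), \pi(L\Gamma_1), \ldots, \pi(L\Gamma_m)$ coming from the iterated left mutations of $P_\Gamma$ with respect to $M_\Gamma$. Transporting these along the triangle equivalence $\bar{F}$ produces $m+1$ pairwise non-isomorphic indecomposable complements of $Y$ in ${\mathcal{C}}_A$. This completes the argument.

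The main obstacle, and the point deserving the most care, is the identification step: verifying that an indecomposable summand $Z'$ of a basic silting object genuinely corresponds to a $P$-indecomposable $e\Gamma$ for a \emph{primitive} idempotent $e$ of $H^0\Gamma$, so that the machinery of Section~2 (which is set up starting from a primitive idempotent of $H^0A$ with $H^0A$ basic) applies verbatim to $\Gamma$. This requires knowing that $H^0\Gamma$ is basic and that the Krull--Schmidt decomposition of $\Gamma$ in $\mathrm{per}\Gamma$ matches the idempotent decomposition of $H^0\Gamma$ — both of which follow from $\mathrm{per}\Gamma$ being Krull--Schmidt (as established after Proposition~\ref{4}) together with the fact that $F$ sends $\Gamma$ to the basic object $Z$. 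Once this bookkeeping is in place, everything else is a direct transport of structure along $\bar{F}$ and an invocation of Theorem~\ref{20}.
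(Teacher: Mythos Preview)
Your proposal is correct and follows essentially the same approach as the paper's proof: reduce to Theorem~\ref{20} by passing to the dg endomorphism algebra $\Gamma$ of a cofibrant representative of the basic silting object $Z$, use the Morita equivalence $-\overset{L}{\otimes}_\Gamma Z$ to identify $Y$ with the almost complete $m$-cluster tilting $P$-object $\pi((1-e)\Gamma)$ in ${\mathcal C}_\Gamma$, and then transport the $m+1$ complements back along the induced triangle equivalence. The paper handles the identification step slightly more tersely (it simply notes that $H^0\Gamma$ is basic and takes $\Gamma' = \mathrm{Hom}_A^\bullet(Z,Z/Z')$), but your more explicit bookkeeping about primitive idempotents and the Krull--Schmidt decomposition is a welcome clarification of exactly the same point.
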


\begin{proof}
Let $Y$ be a liftable almost complete $m$-cluster tilting object. By
definition there exists a basic silting object $Z$ (assume that $Z$
is cofibrant) in per$A$ such the $\pi(Z/Z')$ is isomorphic to $Y$
for some indecomposable direct summand $Z'$ of $Z$. Let $\Gamma$ be
the dg endomorphism algebra ${\rm Hom}_A^{\bullet} (Z,Z)$. Then
$H^0{\Gamma}$ is a basic algebra.

Similarly as in the proof of Theorem \ref{25}, the dg algebra
$\Gamma$ satisfies Assumptions \ref{23}, and the left derived
functor $F := - \overset{L}{\otimes}_{{\Gamma}}{Z}$ induces a
triangle equivalence from ${\mathcal {C}}_{{\Gamma}}$ to ${\mathcal
{C}}_A$ which sends $\pi({\Gamma})$ to $\pi(Z)$. Let $\Gamma'$ be
the object ${\rm Hom}_A^{\bullet} (Z,Z/Z')$ in per$\Gamma$. Then
$\pi(\Gamma')$ is the almost complete $m$-cluster tilting $P$-object
in ${\mathcal {C}}_{\Gamma}$ which corresponds to $Y$ under the
functor $F$. It follows from Theorem \ref{20} that $\pi(\Gamma')$
has at least $m+1$-complements in ${\mathcal {C}}_{\Gamma}$. So does
the liftable almost complete $m$-cluster tilting object $Y$ in
${\mathcal {C}}_A$.
\end{proof}

\begin{rem}\label{28}
Let $\mathcal {T}$ be a Krull-Schmidt {\rm Hom}-finite triangulated
category with a Serre functor. In fact, following \cite{IY08}, one
can get that any almost complete $m$-cluster tilting object $Y$ in
$\mathcal {T}$ has at least $m+1$ complements. Note that the
notation in \cite{GUO} and \cite{IY08} has some differences with
each other, for example, $m$-cluster tilting objects in \cite{GUO}
correspond to $(m+1)$-cluster tilting subcategories (or objects) in
\cite{IY08}. Here we use the same notation as \cite{GUO}. Set
${\mathcal {Y}} = {\rm add}Y$, ${\mathcal {Z}} =
{\cap}^m_{i=1}\,^{\perp}({\Sigma}^i{\mathcal {Y}})$ and ${\mathcal
{U}} = {\mathcal {Z}}/{\mathcal {Y}}$. Let $X$ be an $m$-cluster
tilting object in $\mathcal {T}$ which contains $Y$ as a direct
summand. Set ${\mathcal {X}} = {\rm add}X$. Then by Theorem 4.9 in
\cite{IY08}, the subcategory ${\mathcal {L}} := {\mathcal
{X}}/{\mathcal {Y}}$ is $m$-cluster tilting in the triangulated
category $\mathcal {U}$. The subcategories ${\mathcal
{L}},\,{\mathcal {L}}\langle 1 \rangle, \, \ldots, \, {\mathcal
{L}}\langle m \rangle$ are distinct $m$-cluster tilting
subcategories of $\mathcal {U}$, where $\langle 1 \rangle$ is the
shift functor in the triangulated category $\mathcal {U}$. Also by
the same theorem, the one-one correspondence implies that the number
of $m$-cluster tilting objects of $\mathcal {T}$ containing $Y$ as a
direct summand is at least $m+1$.
\end{rem}



\vspace{.3cm}
\section{Minimal cofibrant resolutions of simple modules\\ for strongly $(m+2)$-Calabi-Yau case}

The well-known Connes long exact sequence (SBI-sequence) for cyclic
homology \cite{Lod} associated to a dg algebra $A$ is as follows
$$\ldots \ra HH_{m+3}(A) \stackrel{I} \ra HC_{m+3}(A) \stackrel{S} \ra HC_{m+1}(A)
\stackrel{B} \ra HH_{m+2}(A) \stackrel{I} \ra \ldots,$$
where $HH_{\ast}(A)$ denotes the Hochschild homology of $A$ and
$HC_{\ast}(A)$ denotes the cyclic homology.

Let $M$ and $N$ be two dg $A$-modules with $M$ in per$A^e$. Then in
${\mathcal {D}}(k)$ we have the isomorphism

$${\rm RHom}_{A^e} ({\rm
RHom}_{A^e} (M, A^e), N) \simeq M \overset{L} \otimes_{A^e} N .$$ An
element $\xi = \sum^s_{i=1} \xi_{1i} \otimes \xi_{2i} \in H^r (M
\overset{L} \otimes_{A^e} N)$ is {\em non-degenerate} if the
corresponding map
$$\xi^{+}: {\rm RHom}_{A^e} (M, A^e)
\ra {\Sigma}^{r}N$$ given by $\xi^{+}(\phi) = \sum^s_{i=1}
(-1)^{|\phi||\xi|} \phi(\xi_{1i})_2 \xi_{2i} \phi(\xi_{1i})_1$ is an
isomorphism. Throughout this article, we write $|\cdot|$ to denote
the degrees.

Let $l$ be a finite dimensional separable $k$-algebra. We fix a
trace $Tr: l \ra k$ and let $\sigma' \otimes \sigma''$ be the
corresponding Casimir element (i.e., $\sigma' \otimes \sigma'' =
\sum \sigma'_i \otimes \sigma''_i$ and $Tr(\sigma'_i \sigma''_j) =
\delta_{ij}$). An {\em augmented dg $l$-algebra} is a dg algebra $A$
equipped with dg $k$-algebra homomorphisms $l
\stackrel{\varsigma}\ra A \stackrel{\epsilon}\ra l $ such that
$\epsilon \varsigma$ is the identity. Following \cite{VDB} we write
$PCAlgc(l)$ for the category of pseudo-compact augmented dg
$l$-algebras satisfying $ker(\epsilon) = coker(\varsigma) = {\rm
rad}A$. When forgetting the grading, rad$A$ is just the Jacobson
radical of the underlying ungraded algebra $A^u := \prod_r A^r$ of
the dg algebra $A=(A^r)_r$.

The SBI-sequence can be extended to the case that $A \in PCAlgc(l)$,
where $HH_{\ast}(A) (= H_{\ast}(A \overset{L}\otimes_{A^e}A))$ is
computed by the pseudo-compact Hochschild complex. For more details,
see section 8 and Appendix B in \cite{VDB}.

\begin{defn} [\cite{VDB}]
An algebra $A \in PCAlgc(l)$ is  {\em strongly ($m+2$)-Calabi-Yau}
if $A$ is topologically homologically smooth and $HC_{m+1}(A)$
contains an element $\eta$ such that $B\eta$ is non-degenerate in
$HH_{m+2}(A)$.
\end{defn}

\begin{thm}[\cite{VDB}] \label{29}
Let $A \in PCAlgc(l)$. Assume that $A = (A^r)_{r \leq 0}$ is
concentrated in nonpositive degrees. Then $A$ is strongly
($m+2$)-Calabi-Yau if and only if there is a quasi-isomorphism
$(\widehat{T_l V}, d) \ra A$ as augmented dg $l$-algebras with $V$
having the following properties
\begin{itemize}
\item[a)] $d(V) \cap V = 0$;

\item[b)] $V = V_c \oplus lz$ with $z$ an $l$-central element of
degree $-m-1$, $V_c$ finite dimensional and concentrated in degrees
$[-m,0]$;

\item[c)] $dz = \sigma' \eta \sigma''$ with $\eta \in V_c
{\otimes}_{l^e} V_c$ non-degenerate and antisymmetric under the flip
$F: v_1 \otimes v_2 \ra (-1)^{|v_1||v_2|}v_2 \otimes v_1$ for any
$v_1$, $v_2$ in $V_c$.
\end{itemize}

\end{thm}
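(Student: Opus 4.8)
The plan is to follow Van den Bergh \cite{VDB} and to argue throughout by means of minimal models. Recall that every $A \in PCAlgc(l)$ admits a minimal model: a quasi-isomorphism $(\widehat{T_l V}, d) \to A$ of augmented dg $l$-algebras with $d(V) \subseteq \mathfrak{m}^2$, where $\mathfrak{m} = \mathrm{rad}(\widehat{T_l V})$ is the closed ideal generated by $V$; this is the pseudo-compact form of Kadeishvili's theorem, established in \cite{VDB}. Property (a), $d(V) \cap V = 0$, is then part of minimality, so in each direction I would only have to match properties (b) and (c) of $V$ with the strong $(m+2)$-Calabi-Yau condition on $A$. A second standard input is the completed Koszul bimodule resolution $P$ of $\widehat{T_l V}$: its underlying bigraded bimodule is $(A \widehat{\otimes}_l V \widehat{\otimes}_l A)\oplus(A \widehat{\otimes}_l A)$, with differential built from the internal differential $d$, the multiplication map $A \widehat{\otimes}_l A \to A$, the outer Koszul map $u \otimes v \otimes w \mapsto uv \otimes w - u \otimes vw$, and a $d$-correction term; it resolves $A$ over $A^e$, and, since $l$ is separable, it is a perfect (cofibrant) resolution as soon as $V$ is finite dimensional. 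The complexes computing $HH_*(A)$ and, with Connes' operator $B$, $HC_*(A)$ are obtained from $P$ by $- \otimes_{A^e} A$, and are thus expressed explicitly in terms of $V$ and $d$.

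For the implication ``$\Leftarrow$'', suppose the minimal model has $V = V_c \oplus lz$ and $dz = \sigma'\eta\sigma''$ as in (b), (c). Since $l$ and $V$ are finite dimensional, $P$ is a perfect resolution, so $A$ is topologically homologically smooth. Next I would apply ${\rm Hom}_{A^e}(-, A^e)$ to $P$ and use separability of $l$ to see that the non-degenerate pairing $\eta$, together with the relation $dz = \sigma'\eta\sigma''$, assembles into a morphism of complexes $P^{\vee} \to \Sigma^{-m-2} P$; this morphism is a quasi-isomorphism precisely because $\eta$ is non-degenerate, which is exactly the isomorphism ${\rm RHom}_{A^e}(A, A^e) \simeq \Sigma^{-m-2} A$ of Assumption \ref{23}(d). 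Finally, the generator $z$ with $dz = \sigma'\eta\sigma''$ is what lifts the Hochschild class of $\eta$ to a cyclic class $\eta \in HC_{m+1}(A)$ with $B\eta$ non-degenerate in $HH_{m+2}(A)$; hence $A$ is strongly $(m+2)$-Calabi-Yau.

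For the implication ``$\Rightarrow$'', I would start from an arbitrary minimal model $(\widehat{T_l V}, d) \to A$ and extract (b), (c) step by step. Topological homological smoothness forces the free bimodule $A \widehat{\otimes}_l V \widehat{\otimes}_l A$ appearing in $P$ to be perfect over $A^e$, hence $V$ to be finite dimensional, and $H^p A = 0$ for $p > 0$ together with minimality confines the generators to nonpositive degrees. The crucial step is producing the distinguished central generator $z$: the isomorphism ${\rm RHom}_{A^e}(A, A^e) \simeq \Sigma^{-m-2} A$ makes $P$ self-dual up to the shift $\Sigma^{-m-2}$, and since $P$ is built from $V$ in the Koszul pattern, the degrees of this self-duality cannot balance if $V$ is concentrated in $[-m, 0]$; they force exactly one extra $l$-central generator $z$ in degree $-m-1$, the self-duality induces a perfect pairing $\eta \in V_c \otimes_{l^e} V_c$, and comparing differentials forces $dz = \sigma'\eta\sigma''$. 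Since $dz$ has degree $-m$, the pairing $\eta$ has degree $-m$, and together with the nonpositivity of the degrees this pins down $V_c$ in $[-m,0]$. Finally, that $\eta$ comes from cyclic homology, and not merely from the bimodule Calabi-Yau structure, translates --- through Connes' operator $B$ on the complex that computes $HH_*(A)$ and $HC_*(A)$ from $P$ --- into the antisymmetry of $\eta$ under the flip $F$.

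The hard part, in both directions but most acutely in ``$\Rightarrow$'', will be this chain-level dictionary between the cyclic Calabi-Yau data and the combinatorics of the minimal model: one must compute $HH_*$ and $HC_*$ of $\widehat{T_l V}$ explicitly through $P$, identify Connes' $B$ with the passage $z \mapsto dz = \sigma'\eta\sigma''$, and verify that a non-degenerate cyclic class yields a self-duality of $P$ simultaneously compatible with the $l$-bimodule structure, with the shift $m+2$, with the placement of $z$ in degree $-m-1$, and with the antisymmetry of $\eta$. This is where the bookkeeping of degrees and signs is heaviest; it is carried out in full in \cite{VDB}, to which I would refer for the details.
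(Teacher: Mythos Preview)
The paper does not give its own proof of this statement: Theorem~\ref{29} is quoted verbatim from Van den Bergh \cite{VDB} and invoked as a black box, so there is no argument in the paper to compare your proposal against. Your outline is a plausible sketch of the strategy in \cite{VDB} (minimal models, the Koszul bimodule resolution, self-duality forced by the Calabi--Yau property, and the cyclic lift via Connes' $B$), but since the paper itself defers entirely to that reference, the appropriate comparison is with \cite{VDB} rather than with anything here.
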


We would like to present the explicit construction of Ginzburg dg
categories in the following straightforward proposition.

\begin{prop} \label{31}
The completed Ginzburg dg category $\widehat{\Gamma}_{m+2} (Q,W)$
associated to a finite graded quiver $Q$ concentrated in degrees
$[-m,0]$ and a reduced superpotential $W$ being a linear combination
of paths of $Q$ of degree $1-m$ and of length at least $3$, is
strongly ($m+2$)-Calabi-Yau.
\end{prop}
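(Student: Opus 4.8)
The plan is to verify that the completed Ginzburg dg category $\widehat{\Gamma}_{m+2}(Q,W)$ falls under the scope of Theorem \ref{29}, by exhibiting it as a completed tensor algebra $(\widehat{T_l V}, d)$ over the separable algebra $l = k^{Q_0}$ (the product of copies of $k$ indexed by the vertices of $Q$) with a graded bimodule $V$ satisfying properties (a)--(c). First I would recall the construction of $\widehat{\Gamma}_{m+2}(Q,W)$: one adjoins to $Q$ a reversed arrow $a^\ast$ of degree $-m-|a|$ for every arrow $a$, and a loop $t_i$ of degree $-m-1$ at every vertex $i$; the differential sends $a^\ast$ to the appropriate cyclic derivative $\partial_a W$ of the superpotential and sends $\sum_i t_i$ to $\sum_{a}[a, a^\ast]$. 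So the candidate bimodule decomposes as $V = V_c \oplus lz$, where $V_c$ is spanned by the arrows of $Q$ together with all the $a^\ast$, and $z = \sum_i t_i$ is the $l$-central element of degree $-m-1$.

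Next I would check the three conditions one at a time. For (b): since $Q$ sits in degrees $[-m,0]$ and $|a^\ast| = -m - |a|$ also lands in $[-m,0]$, the space $V_c$ is concentrated in $[-m,0]$, and it is finite dimensional because $Q$ is a finite quiver; the element $z$ is $l$-central of degree $-m-1$ by construction. For (a), $d(V)\cap V = 0$: the differential of an arrow $a$ is zero, the differential of $a^\ast$ is $\partial_a W$ which, because $W$ is a linear combination of paths of length at least $3$, is a combination of paths of length at least $2$ and hence lies in $\mathfrak{m}^2$ rather than in $V$; and $dz = \sum_a [a,a^\ast]$ is likewise a sum of paths of length $2$, so it is not in $V$ either. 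For (c), I must identify $\eta \in V_c \otimes_{l^e} V_c$ with $dz = \sigma'\eta\sigma''$, show it is non-degenerate, and show it is antisymmetric under the flip $F$. The natural candidate is $\eta = \sum_a (a \otimes a^\ast - (-1)^{|a||a^\ast|} a^\ast \otimes a)$ (up to signs that I would fix carefully), which pairs each arrow with its dual; non-degeneracy amounts to the observation that this pairing identifies $V_c$ with its $l$-dual shifted appropriately, and antisymmetry under $F$ is visible from the way the two summands are arranged. Finally, by Theorem \ref{17} (or a direct check) $\widehat{\Gamma}_{m+2}(Q,W)$ is topologically homologically smooth and concentrated in nonpositive degrees, so Theorem \ref{29} applies and yields strong $(m+2)$-Calabi-Yau-ness.

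The main obstacle I anticipate is the bookkeeping in condition (c): getting the Koszul signs right so that $dz$ really equals $\sigma'\eta\sigma''$ for an $\eta$ that is genuinely antisymmetric under $F$ in the graded sense, and checking non-degeneracy of $\eta$ precisely as defined (via the map $\eta^{+}$ being an isomorphism) rather than just informally. The reducedness hypothesis on $W$ (no paths of length $\leq 2$) and the length-$\geq 3$ condition are exactly what is needed to keep $d(V)$ out of $V$, so I would make sure to use them explicitly there; the degree condition on $W$ (degree $1-m$) is what makes $a^\ast \mapsto \partial_a W$ a degree $+1$ map, which I would also point out. Everything else is a direct unwinding of definitions, so the proposition should follow without serious difficulty once the sign conventions are pinned down.
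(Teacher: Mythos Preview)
Your proposal is correct and follows essentially the same route as the paper: both verify the hypotheses of Theorem~\ref{29} by identifying $\widehat{\Gamma}_{m+2}(Q,W)$ with $(\widehat{T_l V},d)$ for $l=\prod_{i\in Q_0}ke_i$, taking $V_c$ to be the span of the arrows of the doubled quiver and $z=\sum_i t_i$, and then checking conditions (a)--(c) exactly as you outline. The paper carries out the non-degeneracy check for $\eta=\sum_a[a,a^\ast]$ explicitly by evaluating $\eta^+$ on the dual basis $\{\phi_\gamma\}$ of $\mathrm{Hom}_{l^e}(\overline{Q}^G,l^e)$ and reading off that $\eta^+(\phi_a)=(-1)^{m|a|}a^\ast$ and $\eta^+(\phi_{a^\ast})=(-1)^{1+|a^\ast|^2}a$, which is precisely the sign bookkeeping you anticipated.
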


\begin{proof}
We only need to check that $\widehat{\Gamma}_{m+2} (Q,W)$ satisfies
the assumptions and condition 2) in Theorem \ref{29} from its
definition.

Let $l$ be the separable $k$-algebra ${\prod}_{i \in Q_0} ke_i$. Let
${\overline{Q}}^G$ be the double quiver obtained from $Q$ by
adjoining opposite arrows $a^{\ast}$ of degree $-m-|a|$ for arrows
$a \in Q_1$. Let ${\widetilde{Q}}^G$ be obtained from
${\overline{Q}}^G$ by adjoining a loop $t_i$ of degree $-m-1$ for
each vertex $i$. Then the completed Ginzburg dg category
$\widehat{\Gamma}_{m+2} (Q,W)$ is the completed path category
$\widehat{T_l({\widetilde{Q}}^G)}$ with the following differential
\begin{center}
$d(a) = 0$, \quad $a \in Q_1$; \\
$d(t_i) = e_i ({\sum}_{a \in Q_1} [a,a^{\ast}]) e_i, \quad i \in
Q_0$; \\
$d(a^{\ast}) = (-1)^{|a|} \frac{\partial W}{\partial a} = (-1)^{|a|}
{\sum}_{p=uav} (-1)^{(|a|+|v|)|u|}v u, \quad a \in Q_1$;
\end{center}
where the sum in the third formula runs over all homogeneous
summands $p=uav$ of $W$.

Thus, the components of $\widehat{{\Gamma}}_{m+2} (Q,W)$ are
concentrated in nonpositive degrees and ${\widehat{\Gamma}}_{m+2}
(Q,W)$ $( = l \oplus {\prod}_{s \geq 1}
({\widetilde{Q}}^G)^{{\otimes}_l s})$ lies in $PCAlgc(l)$.

The differential above which is induced by the reduced
superpotential $W$ satisfies that $d({\widetilde{Q}}^G) \cap
{\widetilde{Q}}^G = 0$. Set $z = {\sum}_{i \in Q_0} t_i$. Then $z$
is an $l$-central element of degree $-m-1$. Clearly,
${\widetilde{Q}}^G = {\overline{Q}}^G \oplus lz$, the double quiver
${\overline{Q}}^G$ is finite and concentrated in degrees $[-m,0]$,
and the element $d(z) = {\sum}_{a \in Q_1} (a a^{\ast} -
(-1)^{|a||a^{\ast}|} a^{\ast} a)$ is antisymmetric under the flip
$F$.

The last step is to show that $\eta := {\sum}_{a \in Q_1}
[a,a^{\ast}]$ is non-degenerate, that is, the corresponding map
$$\eta^+: \, {\rm Hom}_{l^e} ({\overline{Q}}^G,l^e) \lra {\overline{Q}}^G, \quad \phi \ra (-1)^{|\phi||\eta|} \phi (\eta_1)_2 \eta_2 \phi
(\eta_1)_1$$is an isomorphism. Define morphisms $\phi_{\gamma}
(\gamma \in {\overline{Q}}^G) : {\overline{Q}}^G \ra l^e$ as follows
$$\phi_{\gamma} (\alpha) = \delta_{\alpha \gamma} e_{t(\alpha)} \otimes
e_{s(\alpha)}.$$ Then $\{\phi_{\gamma} | \gamma \in {\overline{Q}}^G
\}$ is a basis of the space ${\rm Hom}_{l^e}
({\overline{Q}}^G,l^e)$. Applying the map $\eta^+$, we obtain the
images $\eta^+ (\phi_a) = (-1)^{m|a|} a^{\ast}$ and $\eta^+
(\phi_{a^{\ast}}) = (-1)^{1+|a^{\ast}|^2} a$ for arrows $a \in Q_1$.
Thus, $\{\eta^+ (\phi_{\gamma}) | \gamma \in {\overline{Q}}^G \}$ is
a basis of ${\overline{Q}}^G$. Therefore, the element $\eta$ is
non-degenerate.
\end{proof}

Now we write down the explicit construction of deformed
preprojective dg algebras as described in \cite{VDB}. Let $Q$ be a
finite graded quiver and $L$ the subset of $Q_1$ consisting of all
loops $a$ of odd degree such that $|a| = -m/2$. Let
${\overline{Q}}^V$ be the double quiver obtained from $Q$ by
adjoining opposite arrows $a^{\ast}$ of degree $-m-|a|$ for $a \in
Q_1\setminus L$ and putting $a^{\ast} = a$ without adjoining an
extra arrow for $a \in L$. Let $N$ be the Lie algebra $k
{\overline{Q}}^V / {[k {\overline{Q}}^V, k {\overline{Q}}^V]}$
endowed with the necklace bracket $\{-,- \}$ (cf. \cite{BL},
\cite{Gi01}). Let $W$ be a superpotential which is a linear
combination of homogeneous elements of degree $1-m$ in $N$ and
satisfies $\{W,W\}=0$ (in order to make the differential
well-defined). Let ${\widetilde{Q}}^V$ be obtained from
${\overline{Q}}^V$ by adjoining a loop $t_i$ of degree $-m-1$ for
each vertex $i$. Then the {\em deformed preprojective dg algebra}
$\Pi(Q,m+2,W)$ is the dg algebra $(k{\widetilde{Q}}^V,d)$ with the
differential
\begin{center}
$da= \{W,a\}=(-1)^{(|a|+1)|a^{\ast}|}\frac{\partial W}{\partial
a^{\ast}}= (-1)^{(|a|+1)|a^{\ast}|}\sum_{p=ua^{\ast}v} (-1)^{(|a^{\ast}|+|v|)|u|}v u$; \\
$da^{\ast} = \{W,a^{\ast}\} = (-1)^{|a|+1} \frac{\partial
W}{\partial a} = (-1)^{|a|+1}
{\sum}_{p=uav} (-1)^{(|a|+|v|)|u|}v u$; \\
$dt_i = e_i ({\sum}_{a \in Q_1} [a,a^{\ast}]) e_i$;
\end{center}
where $a \in Q_1$ and $i \in Q_0$. Later we will denote the
homogeneous elements $rvu \, (r \in k)$ appearing in $d \alpha \,
(\alpha \in {\overline{Q}}^V)$ by $y (\alpha, v, u)$.

\begin{rem} \label{32}
As in Proposition \ref{31}, we see that the completed deformed
preprojective dg algebra $\widehat{\Pi}(Q,m+2,W)$ associated to a
finite graded quiver $Q$ concentrated in degrees $[-m,0]$ and a
reduced superpotential $W$ being a linear combination of paths of
${\overline{Q}}^V$ of length at least $3$, is also strongly
($m+2$)-Calabi-Yau.
\end{rem}

Suppose that $-1$ is a square in the field $k$ and denote by
$\sqrt{-1}$ a chosen square root. Then the class of deformed
preprojective dg algebras is strictly greater than the class of
Ginzburg dg categories. Suppose that $Q$ does not contain {\em
special loops} (i.e., loops of odd degree which is equal to $-m/2$).
Then we can easily see that $\Gamma_{m+2}(Q,W) = \Pi (Q,m+2,-W)$.
Otherwise, let $Q^0$ be the subquiver of $Q$ obtained by removing
the special loops. For each special loop $a$ in $Q_1$, we draw a
pair of loops $a'$ and $a''$ which are also special at the same
vertex of $Q^0$. Denote the new quiver by $Q'$. Let $W'$ be the
superpotential obtained from $W$ by replacing each special loop by
the corresponding element $a' + a'' {\sqrt{-1}}$. Now we define a
map $\iota : \Gamma_{m+2}(Q,W) \ra \Pi (Q',m+2,-W')$, which sends
each special loop $a$ of $Q_1$ to the element $a' + a'' {\sqrt{-1}}$
and its dual $a^{\ast}$ to the element $a' - a'' {\sqrt{-1}}$ in
$\Pi (Q',m+2,-W')$, and is the identity on the other arrows of
${\widetilde{Q}}^G$. Then it is not hard to check that $\iota$ is a
dg algebra isomorphism. It follows that Ginzburg dg categories are
deformed preprojective dg algebras. For the strictness, see the
following example.

\begin{example}\label{36}
Suppose that $m$ is $2$. Let $Q$ be the quiver consisting of only
one vertex `$\bullet$' and one loop $a$ of degree $-1$. Then the
Ginzburg dg category $ {\Gamma}_4(Q,0)$ and the deformed
preprojective dg algebra ${\Pi}(Q,4,0)$ respectively have the the
following underlying graded quivers
\[
{\widetilde{Q}}^G: \quad   \xymatrix  { \bullet \ar@(ur,lu)[]_{a}
\ar@(ur,rd)[]^{{a}^{\ast}} \ar@(ul,ld)[]_{t} },  \quad \quad \quad
{\widetilde{Q}}^V: \quad \xymatrix { \bullet \ar@(ur,lu)[]_{a =
{a}^{\ast}} \ar@(ul,ld)[]_{t} }
\]
where $|a| = |a^{\ast}| = -1$ and $|t| = -3$. The differential takes
the following values $$d(a) = 0 = d(a^{\ast}), \quad \,
d_{\Gamma_{4}(Q,0)}(t) = a a^{\ast}+a^{\ast}a, \quad
d_{\Pi(Q,4,0)}(t) = 2a^2.$$ Then ${\rm dim} H^{-1} (\Gamma_4(Q,0)) =
2$ while ${\rm dim} H^{-1} (\Pi(Q,4,0)) = 1$. Hence, these two dg
algebras are not quasi-isomorphic. Moreover, it is obvious that the
dg algebra $\Pi(Q,4,0)$ can not be realized as a Ginzburg dg
category.
\end{example}

\begin{lem}
Let $\Pi = \widehat{\Pi}(Q,m+2,W)$ be a completed deformed
preprojective dg algebra. Let $x$ (resp. $y$) denote the minimal
(resp. maximal) degree of the arrows of ${\overline{Q}}^V$. Then
there exist a canonical completed deformed preprojective dg algebra
$\Pi' = \widehat{\Pi}(Q',m+2,W')$ isomorphic to $\Pi$ as a dg
algebra, where the quiver $Q'$ is concentrated in degrees
$[-m/2,y]$.
\end{lem}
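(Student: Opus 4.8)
The plan is to exploit the duality built into the double quiver ${\overline{Q}}^V$. For $a\in Q_1\setminus L$ the arrow $a$ and its dual $a^{*}$ satisfy $|a|+|a^{*}|=-m$, while a special loop is self-dual of degree exactly $-m/2$. Hence the multiset of degrees of the arrows of ${\overline{Q}}^V$ is symmetric about $-m/2$; in particular $x+y=-m$, so $x\le -m/2\le y$. The key observation is that the dg algebra $\widehat{\Pi}(Q,m+2,W)$ depends only on the doubled datum — the graded quiver ${\overline{Q}}^V$ equipped with its pairing $a\mapsto a^{*}$, together with $W$ — and not on which arrow of each dual pair one has chosen to call ``primary''. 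So I would simply change that choice, declaring as primary, for each dual pair, the arrow whose degree is $\ge -m/2$.

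Concretely, I would define $Q'$ by $Q'_0=Q_0$, and: for $a\in Q_1$ with $|a|\ge -m/2$, keep $a$ (with the same source, target and degree); for $a\in Q_1$ with $|a|<-m/2$ (such an $a$ is never a special loop, since special loops have degree $-m/2$), replace $a$ by a new arrow $\bar a$ in the direction of $a^{*}$, of degree $-m-|a|>-m/2$, meant to play the role of $a^{*}$. Then ${\overline{Q'}}^V$ is canonically identified, as a graded quiver with pairing, with ${\overline{Q}}^V$ (the dual of $\bar a$ being the copy of $a$), and every arrow of $Q'$ now lies in $[-m/2,y]$, special loops being unchanged. The only freedom is the choice among the two arrows of a dual pair that both sit in degree $-m/2$ without forming a special loop, and this choice does not affect the resulting dg algebra; this is what ``canonical'' refers to.

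Next I would write down the isomorphism $\Phi\colon (k{\widetilde{Q}}^V,d)\to(k{\widetilde{Q'}}^V,d')$ on generators. On an arrow that is kept, and on the loops $t_i$, let $\Phi$ be the identity ($t_i\mapsto t'_i$). On a swapped pair, set $\Phi(a^{*})=\bar a$ and $\Phi(a)=-(-1)^{(m+1)|a|}\,\bar a^{*}$; this sign is forced by the graded-commutator identity $[a^{*},a]=-(-1)^{|a||a^{*}|}[a,a^{*}]=-(-1)^{(m+1)|a|}[a,a^{*}]$, which makes $\Phi\bigl([a,a^{*}]\bigr)=[\bar a,\bar a^{*}]$ and hence $\Phi\bigl(\sum_{a\in Q_1}[a,a^{*}]\bigr)=\sum_{b\in Q'_1}[b,b^{*}]$, so that $\Phi(dt_i)=d'(\Phi t_i)$. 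One checks that $\Phi$ preserves the graded (anti)symmetric pairing $a\leftrightarrow a^{*}$ of ${\overline{Q}}^V$; consequently the necklace bracket is natural under $\Phi$, so setting $W':=\Phi(W)$ gives a homogeneous element of degree $1-m$ in $k{\overline{Q'}}^V/[k{\overline{Q'}}^V,k{\overline{Q'}}^V]$ with $\{W',W'\}=\Phi\{W,W\}=0$. Thus $\Pi'=\widehat{\Pi}(Q',m+2,W')$ is a genuine completed deformed preprojective dg algebra, and for every arrow $\gamma$ of ${\overline{Q}}^V$ the relations of $\Pi$ are transported correctly, $\Phi(d\gamma)=\Phi\{W,\gamma\}=\{W',\Phi\gamma\}=d'(\Phi\gamma)$, directly from the explicit formulas recalled just before Remark \ref{32}. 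Extending $\Phi$ multiplicatively and then continuously (it carries the ideal generated by the arrows bijectively onto the ideal generated by the arrows) gives the desired isomorphism of pseudo-compact dg algebras $\widehat{k{\widetilde{Q}}^V}\to\widehat{k{\widetilde{Q'}}^V}$.

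The only non-formal point, which I expect to be the main obstacle, is the sign bookkeeping: checking that the single normalisation $\Phi(a)=-(-1)^{(m+1)|a|}\bar a^{*}$ on swapped pairs is simultaneously compatible with all three families of defining relations — the loop relation $dt_i=e_i\bigl(\sum_{a\in Q_1}[a,a^{*}]\bigr)e_i$ (the delicate one, handled by the graded-commutator identity above), and the two families $da=\{W,a\}$ and $da^{*}=\{W,a^{*}\}$ (handled by the naturality of the necklace bracket, which in turn rests on the symplectic compatibility of $\Phi$). Once these signs are pinned down the remaining checks are mechanical. I would also point out that, unlike the comparison between Ginzburg dg categories and deformed preprojective dg algebras discussed earlier in this section, no square root of $-1$ is needed here: only the signs $\pm 1$ enter.
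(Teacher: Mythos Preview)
Your proposal is correct and follows essentially the same approach as the paper: swap the roles of $a$ and $a^{*}$ for those $a\in Q_1$ with $|a|<-m/2$, attaching the appropriate sign, and transport $W$ along the resulting bijection. Your sign $-(-1)^{(m+1)|a|}$ coincides with the paper's $(-1)^{|b||b^{*}|+1}$ (since $|b||b^{*}|=|b|(-m-|b|)\equiv (m+1)|b|\pmod 2$), and your justification via the graded-commutator identity and naturality of the necklace bracket makes explicit what the paper leaves as ``not hard to check''.
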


\begin{proof}
We can construct directly a quiver $Q'$ and a superpotential $W'$.

We claim first that $x+y=-m$. Let $x_1$ (resp. $y_1$) denote the
minimal (resp. maximal) degree of the arrows of $Q$. Then
${\overline{Q}}^V \setminus Q$ is concentrated in degrees
$[-m-y_1,-m-x_1]$. If $x_1 \leq -m-y_1$, then $x= x_1$ and $y_1 \leq
-m-x_1$. Hence, $x+y = x_1 + (-m-x_1) = -m$. Similarly for the case
`$-m-y_1 \leq x_1$'.

Let $Q^0$ be the subquiver of $Q$ which has the same vertices as $Q$
and whose arrows are those of $Q$ with degree belonging to
$[-m/2,y]\, ( = [(x+y)/2,y])$. In this case $|a^{\ast}| = -m - |a|
\in [-m-y,-m/2] = [x,-m/2]$. For each arrow $b$ of $Q$ whose dual
$b^{\ast}$ has degree in $(-m/2,y]$, we add a corresponding arrow
$b'$ to $Q^0$ with the same degree as $b^{\ast}$. Denote the new
quiver by $Q'$. Therefore, the quiver $Q'$ has arrow set
$$\{ a \in Q_1 | \, |a| \in [-m/2,y] \} \cup \{ b'\, | \,
|b'| = |b^{\ast}|, b \in Q_1 \, \mbox{and} \, |b^{\ast}| \in (-m/2,y] \}.$$

We define a map $\iota: {\widetilde{Q}}^V \ra {\widetilde{Q'}}^V$ by
setting $$\iota(a) = a, \, \iota(a^{\ast}) = a^{\ast}; \quad
\iota(t_i) = t_i; \quad  \iota(b) = (-1)^{|b||{b^{\ast}}|+1}
{b'}^{\ast}, \, \iota(b^{\ast}) = b'.$$ Let $W'$ be the
superpotential obtained from $W$ by replacing each arrow $\alpha$ in
$W$ by $\iota(\alpha)$. Then it is not hard to check that the map
$\iota$ can be extended to a dg algebra isomorphism from $\Pi$ to
$\Pi'$.
\end{proof}

In particular, if $Q$ is concentrated in degrees $[-m,0]$, then by
the above lemma, the new quiver $Q'$ is concentrated in degrees
$[-m/2,0]$. If the following two conditions
\begin{itemize}
\item[V1)] $Q$ a finite graded quiver concentrated in degrees
$[-m/2,0]$,

\item[V2)] $W$ a reduced superpotential being a linear combination of paths of
${\overline{Q}}^V$ of degree $1-m$ and of length $\geq 3$,
\end{itemize}
hold, then we will say that the completed deformed preprojective dg
algebra $\widehat{\Pi}(Q,m+2,W)$ is good.

\begin{thm}[\cite{VDB}] \label{30}
Let $A$ be a strongly ($m+2$)-Calabi-Yau dg algebra with components
concentrated in degrees $\leq 0$. Suppose that $A$ lies in
$PCAlgc(l)$ for some finite dimensional separable commutative
$k$-algebra $l$. Then $A$ is quasi-isomorphic to some good completed
deformed preprojective dg algebra.
\end{thm}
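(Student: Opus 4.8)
The strategy is to unwind Van den Bergh's structure theorem (Theorem~\ref{29}) and then recognize the resulting shape as that of a deformed preprojective dg algebra, followed by the degree-reduction lemma proved above. Since $A$ is strongly $(m+2)$-Calabi-Yau, lies in $PCAlgc(l)$ with $l$ finite dimensional separable commutative over the algebraically closed field $k$ (so $l \cong \prod_i ke_i$), and is concentrated in nonpositive degrees, Theorem~\ref{29} provides a quasi-isomorphism of augmented dg $l$-algebras $(\widehat{T_l V}, d) \ra A$ with $d(V) \cap V = 0$, $V = V_c \oplus lz$, $z$ an $l$-central element of degree $-m-1$, $V_c$ finite dimensional and concentrated in degrees $[-m,0]$, and $dz = \sigma'\eta\sigma''$ with $\eta \in V_c \otimes_{l^e} V_c$ non-degenerate and antisymmetric under the flip $F$. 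As quasi-isomorphism is an equivalence relation, it suffices to identify $(\widehat{T_l V}, d)$, up to dg algebra isomorphism, with a good completed deformed preprojective dg algebra. Regarding $V_c$ as an $l^e$-module, it is the span of a finite graded quiver $\overline{Q}$ on the vertex set $\{e_i\}$, concentrated in degrees $[-m,0]$.

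First I would choose a \emph{polarization} of the pair $(\overline{Q},\eta)$, i.e.\ realize $\overline{Q}$ as the double $\overline{Q'}^V$ of a graded quiver $Q'$. For each degree $p \ne -m/2$ the form $\eta$ restricts to a non-degenerate pairing between the degree-$p$ part and the degree-$(-m-p)$ part of $V_c$; picking a basis of the former for $p > -m/2$ and the $\eta$-dual basis of the latter produces arrows $a$ of $Q'$ with $|a| \ne -m/2$ together with duals $a^\ast$ of degree $-m-|a|$. In the middle degree $-m/2$ (present only for $m$ even) $\eta$ restricts to a non-degenerate form, symmetric when $m/2$ is odd and antisymmetric when $m/2$ is even; in the symmetric case I diagonalize it over $k$ using the square root $\sqrt{-1}$, obtaining \emph{special loops} $a$ with $a^\ast = a$, and in the antisymmetric case I pair arrows up as before. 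After this change of basis $V_c = k\overline{Q'}^V$, and (rescaling if necessary) $\eta = \sum_{a \in Q'_1}[a,a^\ast]$, so that, setting $z = \sum_i t_i$, the equation $dz = \sigma'\eta\sigma''$ becomes exactly the rule $dt_i = e_i(\sum_a [a,a^\ast])e_i$ of the deformed preprojective construction.

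Next I produce the superpotential. The identity $d^2 z = 0$ expresses closedness of the non-commutative two-form contracted out of $\sum_a [a,a^\ast]$; since the reduced cyclic homology of the complete path algebra $\widehat{T_l V_c}$ is concentrated in degree zero, this form is exact, so there is $W \in N := k\overline{Q'}^V/[k\overline{Q'}^V,k\overline{Q'}^V]$ with $d\alpha = \{W,\alpha\}$ for every arrow $\alpha$ of $\overline{Q'}^V$; comparing degrees gives $|W| = 1-m$, the minimality condition $d(V)\cap V = 0$ forces $d\alpha \in \mathfrak{m}^2$ and hence $W$ to be reduced and supported on paths of length $\ge 3$, and $d^2 = 0$ is equivalent to $\{W,W\} = 0$. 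Thus $(\widehat{T_l V}, d) \cong \widehat{\Pi}(Q',m+2,W)$ as dg algebras, with $Q'$ (a subquiver of $\overline{Q'}^V$) concentrated in $[-m,0]$. Applying the degree-reduction lemma above to this algebra, it is isomorphic as a dg algebra to $\widehat{\Pi}(Q'',m+2,W'')$ with $Q''$ concentrated in $[-m/2,0]$ and, by the explicit substitution in that lemma, $W''$ still reduced, of degree $1-m$, and supported on paths of length $\ge 3$; so conditions V1) and V2) above hold and $\widehat{\Pi}(Q'',m+2,W'')$ is good. Hence $A$ is quasi-isomorphic to a good completed deformed preprojective dg algebra.

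The main obstacle is the construction of the potential $W$: passing from ``$d^2 = 0$'' plus non-degeneracy of $\eta$ to ``$d|_{V_c}$ is the Hamiltonian derivation $\{W,-\}$'' is a non-commutative Poincar\'e lemma, and it is precisely here that the Calabi-Yau hypothesis enters beyond merely supplying the pairing $\eta$; one must carry out the relevant cyclic-homology vanishing in the pseudo-compact setting (via the extended SBI-sequence recalled above, following Section~8 and Appendix~B of \cite{VDB}) and keep careful track of the antisymmetry of $\eta$ and of the sign conventions so as to land exactly on the formulas defining $\Pi(Q,m+2,W)$. The polarization step is routine (super)linear algebra, with the only subtlety being the middle degree and the normalization of special loops using $\sqrt{-1}$; the final invocation of the degree-reduction lemma is then immediate.
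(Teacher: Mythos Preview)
The paper does not give its own proof of this statement: Theorem~\ref{30} is quoted from Van den Bergh \cite{VDB} and used as a black box, so there is no ``paper's proof'' to compare against. Your outline is a reasonable reconstruction of the argument behind the cited result, correctly chaining Theorem~\ref{29} (the structure theorem, also from \cite{VDB}) with a polarization of the non-degenerate pairing $\eta$ to exhibit the double-quiver shape, then a non-commutative Poincar\'e lemma to produce the superpotential $W$, and finally the degree-reduction lemma stated just before Theorem~\ref{30} to force condition V1). You have also correctly identified where the real work lies (existence of $W$ via cyclic-homology vanishing in the pseudo-compact setting) and flagged the sign and middle-degree subtleties; this is essentially the route taken in \cite{VDB}.
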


We consider the strongly ($m+2$)-Calabi-Yau case in this section, by
Theorem \ref{30}, it suffices to consider good completed deformed
preprojective dg algebras $\Pi = \widehat{\Pi}(Q,m+2,W)$. The simple
$\Pi$-module $S_i$ (attached to a vertex $i$ of $Q$) belongs to the
finite dimensional derived category ${\mathcal {D}}_{fd} (\Pi)$,
hence it also belongs to per$\Pi$. We will give a precise
description of the objects $RA_t$ and $LA_t$ obtained from iterated
mutations of a $P$-indecomposable $e_i \Pi$, where $e_i$ is the
primitive idempotent element associated to a vertex $i$ of $Q$.

\begin{defn}[\cite{Pla}]\label{7}
Let $A = (\widehat{kQ},d)$ be a dg algebra, where $Q$ is a finite
graded quiver and $d$ is a differential sending each arrow to a
(possibly infinite) linear combination of paths of length $\geq 1$.
A dg $A$-module $M$ is {\em minimal perfect} if

\begin{itemize}
\item[a)] its underlying graded module is of the form ${\oplus}^N_{j=1}
R_j$, where $R_j$ is a finite direct sum of shifted copies of direct
summands of $A$, and

\item[b)] its differential is of the form
$d_{int}+{\delta}$, where $d_{int}$ is the direct sum of the
differentials of these $R_j \,\, (1 \leq j \leq N)$, and $\delta$,
as a degree $1$ map from ${\oplus}^N_{j=1} R_j$ to itself, is a
strictly upper triangular matrix whose entries are in the ideal
$\mathfrak{m}$ of $A$ generated by the arrows of $Q$.
\end{itemize}
\end{defn}

\begin{lem}[\cite{Pla}]\label{34}
Let $M$ be a dg $A (= (\widehat{kQ},d))$-module such that $M$ lies
in {\rm per}$A$. Then $M$ is quasi-isomorphic to a minimal perfect
dg $A$-module.
\end{lem}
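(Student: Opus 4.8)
The plan is to realize $M$, up to quasi-isomorphism, as a perfect dg module in \emph{triangular form} — underlying graded module a finite sum of shifted copies of summands of $A$, differential of the shape $d_{int}+\delta$ with $\delta$ $A$-linear and strictly upper triangular — and then to kill the ``non-minimal'' part of $\delta$ by a finite sequence of Gaussian-elimination (change-of-basis) steps. For the first reduction, recall that $M\in{\rm per}A$ means $M$ lies in the thick subcategory generated by $A$, so $M$ is, in ${\mathcal {D}}(A)$, a direct summand of a dg module $P$ obtained from shifts of $A$ by finitely many mapping cones. Lifting all the connecting maps to honest dg-module morphisms (using that $A$, hence each $\Sigma^{n}e_lA$, is cofibrant) one gets $P$ with underlying graded module $\bigoplus_{j=1}^{N}\Sigma^{n_j}e_{l_j}A$ and differential $d_P=d_{int}+\delta$, where $d_{int}$ is the direct sum of the shifted differentials of the $e_{l_j}A$ and $\delta$ is $A$-linear and strictly upper triangular with respect to this decomposition. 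Because the differential of $A=(\widehat{kQ},d)$ sends every arrow, hence every element, into $\mathfrak{m}$, all matrix entries of $d_{int}$ lie in $\mathfrak{m}$; writing the $(i,j)$-entry of $\delta$ as $\gamma_{ij}=\lambda_{ij}e_{l_i}+\gamma_{ij}^{+}$ with $\lambda_{ij}\in k$ (necessarily zero unless $l_i=l_j$ and $n_i=n_j+1$) and $\gamma_{ij}^{+}\in\mathfrak{m}$, the module $P$ is minimal perfect precisely when all the $\lambda_{ij}$ vanish.

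Next comes the minimization. Suppose some $\lambda_{ij}\neq 0$; then $l_i=l_j=:l$ and $n_i=n_j+1$, and the corresponding component $\phi\colon\Sigma^{n_j}e_lA\ra\Sigma^{n_j+1}e_lA$ of $\delta$ is left multiplication by the degree-zero element $\lambda_{ij}e_l+\gamma_{ij}^{+}$ of $e_lAe_l$. Since $\mathfrak{m}$ consists of topologically nilpotent elements of the complete algebra $\widehat{kQ}$, this element is invertible, so $\phi$ is an isomorphism of graded $A$-modules; moreover, as $d_{int}$ respects the decomposition into shifted summands of $A$, the full $\Sigma^{n_j}e_lA\ra\Sigma^{n_j+1}e_lA$ component of $d_P$ equals $\phi$. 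The Gaussian-elimination lemma for complexes then yields a dg $A$-module $P'$, homotopy equivalent to $P$, obtained by deleting these two building blocks and correcting the differential by a term $-\beta\phi^{-1}\alpha$, where $\alpha,\beta$ are the remaining components of $\delta$ entering and leaving the deleted blocks. All of $\alpha,\beta,\phi^{-1}$ are $A$-linear, so $P'$ is again of the form $d_{int}+\delta'$ with $\delta'$ $A$-linear, and with the ordering kept compatible with strict upper triangularity; it has two fewer indecomposable building blocks than $P$.

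Iterating strictly decreases the finite number $N$, so after finitely many steps one reaches a minimal perfect dg module $P_{\min}$ that is homotopy equivalent, hence quasi-isomorphic, to $P$. It remains to pass from $P$ to $M$ itself: $M$ is a retract of $P_{\min}$ in ${\mathcal {D}}(A)$, and since $P_{\min}$ is cofibrant the corresponding idempotent lies in $H^0{\rm Hom}_A^{\bullet}(P_{\min},P_{\min})$; because $d_{P_{\min}}=d_{int}+\delta$ has entries in $\mathfrak{m}$, the space of coboundaries in ${\rm Hom}_A^{0}(P_{\min},P_{\min})$ is contained in the radical of this (complete) algebra, so the idempotent lifts to an actual idempotent dg-endomorphism of $P_{\min}$. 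Splitting it realizes $M$, up to quasi-isomorphism, as a dg-module direct summand of $P_{\min}$; its underlying graded module is a graded summand of $\bigoplus\Sigma^{n_j}e_{l_j}A$, hence again a finite sum of shifted summands of $A$, and its differential vanishes modulo $\mathfrak{m}$, so a final normalization of the decomposition (as in \cite{Pla}) puts it in minimal perfect form.

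I expect the main obstacle to be exactly this bookkeeping around direct summands — keeping strict upper triangularity intact through the Gaussian-elimination steps and, above all, through the splitting of the lifted idempotent at the end; the termination of the minimization itself is an easy finite induction once the triangular presentation of Step 1 is available.
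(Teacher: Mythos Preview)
The paper does not prove this lemma; it is quoted from \cite{Pla} and stated without argument, so there is no proof in the paper to compare against.

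Your approach is the standard one and is essentially that of \cite{Pla}: present $M$ (or a perfect module containing it as a summand) in triangular form $\bigoplus_j \Sigma^{n_j}e_{l_j}A$ with differential $d_{int}+\delta$, then iteratively cancel any pair of summands linked by an invertible entry of $\delta$ via Gaussian elimination, and finally handle the passage to a direct summand by lifting idempotents through the radical using completeness of $\widehat{kQ}$. The termination argument (each cancellation removes two summands) and the preservation of strict upper triangularity (the correction term $-\beta\phi^{-1}\alpha$ only creates entries $(k,l)$ with $k<i<j<l$) are both fine. Your own caveat about the last step is the right one: after splitting the lifted idempotent, the resulting summand has underlying graded module a graded summand of $\bigoplus_j \Sigma^{n_j}e_{l_j}A$, hence by Krull--Schmidt again a finite sum of shifted $e_lA$'s, and its differential is congruent to $d_{int}$ modulo $\mathfrak{m}$; a final change of graded basis (or simply re-running the minimization) puts it in the required form.
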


In the second part of this section, we illustrate how to obtain
minimal perfect dg modules which are quasi-isomorphic to simple
$\Pi$-modules from cofibrant resolutions \cite{KY09}. If a cofibrant
resolution ${\mathbf{p}}X$ of a dg module $X$ is minimal perfect,
then we say ${\mathbf{p}}X$ a {\em minimal cofibrant resolution} of
$X$.

Let $i$ be a vertex of $Q$ and $P_i = e_i {\Pi}$. Consider the short
exact sequence in the category ${\mathcal {C}} (\Pi)$ of dg modules
$$0 \ra ker(p) \stackrel{\iota} \lra P_{i} \stackrel{p}
\lra S_{i} \ra 0 ,$$ where in the category Grmod($\Pi$) of graded
modules $ker(p)$ is the direct sum of $\rho P_{s(\rho)}$ over all
arrows $\rho \in {{\widetilde{Q}}^V}_1$ with $t(\rho) = i$. The
simple module $S_{i}$ is quasi-isomorphic to $cone(ker(p)
\stackrel{\iota} \ra P_{i})$, {\em i.e.}, the dg module
\begin{displaymath}
{X = ( \underline{X} = P_{i} \oplus {\Sigma}X'_0 \oplus \ldots
\oplus {\Sigma} X'_{m+1}, d_X = \left( \begin{array}{cc} d_{P_{i}} &
\iota \\ 0 & -d_{ker(p)}
\end{array} \right)),}
\end{displaymath}
where for each integer $0 \leq j \leq m+1$, the object $X'_j$ is the
direct sum of $\rho P_{s(\rho)}$ ranging over all arrows $\rho \in
{\widetilde{Q}}^V_1$ with $t(\rho) = i$ and $|\rho| = -j$. By
Section 2.14 in \cite{KY09}, the dg module $X$ is a cofibrant
resolution of the simple module $S_{i}$.

Now let $P'_j \, (0 \leq j \leq m+1)$ be the direct sum of
$P_{s(\rho)}$ where $\rho$ ranges over all arrows in
${\widetilde{Q}}^V_1$ satisfying $t(\rho) = i$ and $|\rho| = -j$.
Clearly, $P'_{m+1} = P_{i}$. We require that the ordering of direct
summands $P_{s(\rho)}$ in $P'_j$ is the same as the ordering of
direct summands $\rho P_{s(\rho)}$ in $X'_j$ for each integer $0
\leq j \leq m+1$. Let $Y$ be an object whose underlying graded
module is $\underline{Y} = P_{i} \oplus {\Sigma}P'_0 \oplus
{\Sigma}^2 P'_1 \oplus \ldots \oplus {\Sigma}^{m+2} P'_{m+1}.$ We
endow $\underline{Y}$ with the degree $1$ graded endomorphism
$d_{int} + {\delta}_Y$, where $d_{int}$ is the same notation as in
Definition \ref{7}. The columns of ${\delta}_Y$ have the following
two types: $(\alpha, 0, \ldots, -y_{red}(\alpha,v,u), \ldots, 0)^t$,
and $(t_i, \ldots, -a^{\ast}, \ldots, (-1)^{|b| |b^{\ast}|} b,
\ldots, 0)^t$ for the last column.
Here $\alpha$ is an arrow in ${\overline{Q}}^V$, while $a$ is an
arrow in $Q$ and $b$ is an arrow in ${\overline{Q}}^V \setminus Q$.
Here $y_{red}(\alpha,v,u)$ is obtained from the path $y(\alpha,v,u)
= \beta_s \ldots \beta_1$ (this notation is defined just before
Remark \ref{32}) by removing the factor $\beta_s$. The ordering of
the elements in each column is determined by the ordering of $Y$.

Let $f: Y \ra X$ be a map constructed as the diagonal matrix whose
elements are all arrows in ${\widetilde{Q}}^V_1$ with target at $i$,
together with $e_{i}$ as the first element. Moreover, we require
that the ordering of these arrows is determined by $Y$ (hence also
by $X$), that is, the components of $f$ are of the form
\begin{center} $f_{\rho} : {\Sigma}^{|\rho| + 1} P_{s(\rho)} \lra {\Sigma}
{\rho} P_{s(\rho)}, \quad \quad u \mapsto \rho u .$
\end{center}
It is not hard to check the identity $f (d_{int} + {\delta}_Y) = d_X
f.$ Hence, the morphism $f$ is an isomorphism in ${{\mathcal
{C}}(\Pi)}$, and the map $d_{int} + {\delta}_Y$ makes the object $Y$
into a dg module which is minimal perfect. Therefore, the dg module
$Y$ is a minimal cofibrant resolution of the simple module $S_i$.

\smallskip

In the third part of this section, we show that when there are no
loops of $Q$ at vertex $i$, the truncations of the minimal cofibrant
resolution $Y$ of the simple module $S_i$ produce $RA_t$ and $LA_t
\, (0 \leq t \leq m+1)$ obtained from the $P$-indecomposable $P_i$
by iterated mutations. If we write $M$ for the dg module
${\Pi}/{P_i}$, then the dg modules $P'_j \, (0 \leq j \leq m)$
appearing in $Y$ lie in add$M$. Let ${\varepsilon}_{\leq t}Y$ be the
submodule of $Y$ with the inherited differential whose underlying
graded module is the direct sum of those summands of $Y$ with copies
of shift $\leq t$. Let ${\varepsilon}_{\geq t+1}Y$ be the quotient
module $Y/({\varepsilon}_{\leq t}Y)$. Notice that
${\varepsilon}_{\leq t}Y$ is a truncation of $Y$ for the canonical
weight structure on per$\Pi$, cf. Bondarko, Keller-Nicolas.

\begin{prop} \label{11}
Let $\Pi$ be a good completed deformed preprojective dg algebra
$\widehat{\Pi}(Q,m+2,W)$ and $i$ a vertex of $Q$. Assume that there
are no loops of $Q$ at vertex $i$. Then the following two
isomorphisms
$${\Sigma}^{-t}{\varepsilon}_{\leq t}Y \simeq RA_t \quad
\mbox{and}\quad {\Sigma}^{-t-1}{\varepsilon}_{\geq t+1}Y \simeq
LA_{m+1-t}$$ hold in the derived category $\mathcal {D} := {\mathcal
{D}}{(\Pi)}$ for each integer $0 \leq t \leq m+1$.
\end{prop}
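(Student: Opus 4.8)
The strategy is to establish the two isomorphisms by induction on $t$, exploiting the fact that ${\varepsilon}_{\leq t}Y$ is, by construction, a truncation of the minimal cofibrant resolution $Y$ of $S_i$ for the canonical weight structure on $\operatorname{per}\Pi$, and that the short exact sequences of dg modules
\[
0 \ra {\varepsilon}_{\leq t-1}Y \ra {\varepsilon}_{\leq t}Y \ra {\Sigma}^{t+1}P'_t \ra 0
\]
split as sequences of graded modules, hence yield triangles in $\mathcal{D}$. After shifting by ${\Sigma}^{-t}$, such a triangle reads
\[
{\Sigma}^{-t}{\varepsilon}_{\leq t}Y \ra {\Sigma}P'_t \ra {\Sigma}^{1-t}{\varepsilon}_{\leq t-1}Y \ra \ldots,
\]
i.e. ${\Sigma}^{-t}{\varepsilon}_{\leq t}Y$ is the shift by ${\Sigma}^{-1}$ of a cone of a map $P'_t \ra {\Sigma}^{-t}{\varepsilon}_{\leq t-1}Y$. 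Comparing this with the defining triangle $RA_t \ra A^{(t)} \ra RA_{t-1} \ra {\Sigma}RA_t$, and using the inductive hypothesis ${\Sigma}^{-(t-1)}{\varepsilon}_{\leq t-1}Y \simeq RA_{t-1}$, I would identify $P'_t$ with the term $A^{(t)}$ (note $P'_t \in \operatorname{add}M$ because $0 \leq t \leq m$, and for $t = m+1$ the statement reduces to $Y \simeq {\Sigma}^{m+1}S_i \simeq 0$ in $\mathcal{D}$ up to the appropriate shift — here one must be a little careful about degree $m+1$ since $P'_{m+1}=P_i$ is not in $\operatorname{add}M$). The base case $t=0$ is ${\varepsilon}_{\leq 0}Y = P_i = RA_0$, which is immediate.

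The crux is to show that the connecting map $P'_t \ra {\Sigma}^{-t}{\varepsilon}_{\leq t-1}Y \simeq RA_{t-1}$ appearing in the truncation triangle is a \emph{minimal right $(\operatorname{add}M)$-approximation}. Minimality should follow from the fact that $Y$ is minimal perfect: the entries of ${\delta}_Y$ lie in the ideal $\mathfrak{m}$ generated by the arrows, so no component of the map is a split epimorphism or contains an isomorphism. That it is a right $(\operatorname{add}M)$-approximation amounts to showing that every map from an object of $\operatorname{add}M$ into $RA_{t-1}$ factors through $P'_t$; equivalently, applying $\operatorname{Hom}_{\mathcal{D}}(M', -)$ for $M' \in \operatorname{add}M$ to the truncation triangle, one needs $\operatorname{Hom}_{\mathcal{D}}(M', {\Sigma}^{-t}{\varepsilon}_{\leq t}Y) = 0$. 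This is where the hypothesis that $Q$ has \emph{no loops at} $i$ enters: it guarantees that the dg modules $P'_j$ for $j \leq t$ involve only $P_{s(\rho)}$ with $s(\rho) \neq i$, i.e. lie in $\operatorname{add}M$, so that ${\varepsilon}_{\leq t}Y$ is built from $\operatorname{add}M$ in degrees $\leq t$ and from $P_i$ only in top degree; combined with Proposition \ref{15} (which situates $RA_t$ in ${\mathcal{D}}^{\leq t} \cap {}^{\perp}{\mathcal{D}}^{\leq -1}$) and the weight-structure truncation property, the required vanishing follows. By Theorem \ref{5}, once we know $P'_t \ra RA_{t-1}$ is a minimal right $(\operatorname{add}M)$-approximation, its shifted cone is forced to be $RA_t$, completing the induction.

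For the second isomorphism ${\Sigma}^{-t-1}{\varepsilon}_{\geq t+1}Y \simeq LA_{m+1-t}$, I would run the dual argument using the quotient modules ${\varepsilon}_{\geq t+1}Y$ and the short exact sequences $0 \ra {\Sigma}^{t+2}P'_{t+1} \ra {\varepsilon}_{\geq t+1}Y \ra {\varepsilon}_{\geq t+2}Y \ra 0$, matching the resulting triangles against the defining triangles $LA_{s-1} \ra B^{(s)} \ra LA_s \ra {\Sigma}LA_{s-1}$ for left mutation, with the indexing shift $s = m+1-t$ dictated by the fact that ${\varepsilon}_{\geq m+1}Y = {\Sigma}^{m+2}P_i = {\Sigma}^{m+2}LA_0$ (since $P_i = LA_0 = RA_0$) sits at the bottom of this filtration. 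The minimality of the relevant maps as \emph{left} $(\operatorname{add}M)$-approximations again comes from $Y$ being minimal perfect, and the approximation property from the no-loop hypothesis together with the dual part of Proposition \ref{15}. The main obstacle, and the step deserving the most care, is the verification that the truncation triangles of the weight structure genuinely reproduce the minimal approximation triangles of Aihara--Iyama — in particular pinning down signs and the precise identification of $P'_t$ with $A^{(t)}$ (and $P'_{t+1}$ with $B^{(m+1-t)}$), and handling the boundary indices $t = 0$ and $t = m+1$ where one of the two pieces of the filtration degenerates to a shift of $P_i$ or to an acyclic complex.
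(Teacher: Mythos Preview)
Your overall inductive strategy via the truncation triangles of $Y$ matches the paper's, and the base case and the dual treatment of $LA_{m+1-t}$ are fine. However, the heart of the argument has a genuine gap.

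You claim the approximation property follows from the vanishing of ${\rm Hom}_{\mathcal{D}}(M',\Sigma^{-t}\varepsilon_{\leq t}Y)$ (or, with the correct shift, ${\rm Hom}_{\mathcal{D}}(M',\Sigma^{1-t}\varepsilon_{\leq t}Y)$), and that this vanishing is a consequence of the weight-structure truncation plus Proposition~\ref{15}. But this vanishing is \emph{not} automatic. The object $\Sigma^{1-t}\varepsilon_{\leq t}Y$ has weight range $[1-t,1]$, which includes weight~$0$, so there is no orthogonality from the weight structure alone; and Proposition~\ref{15} concerns $RA_t$, which is exactly what you are trying to identify---using its properties here is circular. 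Concretely, applying ${\rm Hom}(M',-)$ to the triangle $\varepsilon_{\leq t-1}Y \to \varepsilon_{\leq t}Y \to \Sigma^{t}P'_{t-1}\to$ shows that ${\rm Hom}(M',\Sigma^{1-t}\varepsilon_{\leq t}Y)$ is a quotient of ${\rm Hom}(M',RA_{t-1})$, which is typically nonzero. What one actually needs is that the connecting map ${\rm Hom}(M',RA_{t-1})\to{\rm Hom}(M',\Sigma^{1-t}\varepsilon_{\leq t}Y)$ vanishes, and this does not follow from abstract considerations.

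The paper closes this gap by an explicit computation with the matrix form of the differential. Writing an arbitrary morphism $f=(f_t,[f_{t-1}],\ldots,[f_0])^t:L\to RA_t$ for $L\in{\rm add}A$, the cocycle condition $d(f)=0$ forces the top entry $f_t$ (a degree~$-t$ element of $e_i\Pi$) to be a linear combination of paths beginning with an arrow $\alpha_r$ of $\overline{Q}^V$ ending at~$i$. The case $r=t$ (so $f_t=\alpha_t g_0$) yields, via $d(\alpha_t)$, that the remaining entries are exactly the reduced terms $-[y^j_{\rm red}]g_0$, whence $f$ factors through $h^{(t+1)}$. The cases $r<t$ give morphisms that are null-homotopic. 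This is where the hypothesis that $W$ is reduced (so the $\alpha_t$-component is nonzero, giving right-minimality) and the no-loop condition at~$i$ (so every such $\alpha_r$ has source $\neq i$, placing $P'_t$ in ${\rm add}M$) are actually used. Your sketch acknowledges these hypotheses but does not explain how they produce the factorisation; without this explicit step the induction does not close.

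A minor point: your short exact sequence should read $0\to\varepsilon_{\leq t-1}Y\to\varepsilon_{\leq t}Y\to\Sigma^{t}P'_{t-1}\to 0$, not $\Sigma^{t+1}P'_t$; this off-by-one propagates through your shifts.
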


\begin{proof}
We only consider the first isomorphism. Then the second one can be
obtained dually. For arrows of ${\overline{Q}}^V$ of degree $-j$
ending at vertex $i$, we write ${\alpha}_j$; for the symbols
$-y_{red}(\alpha,v,u)$ of degree $-j$, we simply write $-y^j_{red}$,
and for morphisms $f$ of degree $-j$, we write $f_j$, where $0 \leq
j \leq m$. Moreover, we use the notation $[x]$ to denote a matrix
whose entries $x$ have the same `type' (in some obvious sense).

Clearly, when $t=0$, we have that ${\varepsilon}_{\leq 0}Y = P_i =
RA_0$.

When $t=1$, we have the following isomorphisms
\begin{displaymath}
{{\Sigma}^{-1}{\varepsilon}_{\leq 1}Y \simeq ({\Sigma}^{-1}P_i
\oplus P'_0, \left(\begin{array}{cc} d_{{\Sigma}^{-1}P_i} &
-[{\alpha}_0]
\\ 0 & d_{P'_0}
\end{array} \right) ) \simeq {\Sigma}^{-1}cone(P'_0 \overset{h^{(1)}} \lra P_i), }
\end{displaymath}
where each component of $h^{(1)} (= [{\alpha}_0] )$ is the left
multiplication by some ${\alpha}_0$. Since $W$ is reduced, the left
multiplication by ${\alpha}_0$ is nonzero in the space ${\rm
Hom}_{\mathcal {D}}(P'_0,P_i)$. Moreover, only the trivial paths
$e_i$ have zero degree, and there are no loops of ${\overline{Q}}^V$
of degree zero at vertex $i$. It follows that $h^{(1)}$ is a minimal
right (add$M$)-approximation of $P_i$. Then
${\Sigma}^{-1}{\varepsilon}_{\leq 1}Y$ and $RA_1$ are isomorphic in
$\mathcal {D}$.

In general, assume that ${\Sigma}^{-t}{\varepsilon}_{\leq t}Y \simeq
RA_t \, (1 \leq t \leq m)$. We will show that
${\Sigma}^{-t-1}{\varepsilon}_{\leq t+1}Y \simeq RA_{t+1}$. First we
have the following isomorphism
$${\Sigma}^{-t-1}{\varepsilon}_{\leq t+1}Y \simeq ({\Sigma}^{-t-1}P_i
\oplus {\Sigma}^{-t}P'_0 \oplus \ldots \oplus P'_t,$$
\begin{displaymath}
\left(\begin{array}{ccccc} d_{{\Sigma}^{-t-1}P_i} &
(-1)^{t+1}[{\alpha}_0] & \ldots & (-1)^{t+1}[{\alpha}_{t-1}]& (-1)^{t+1}[{\alpha}_t] \\
0 & d_{{\Sigma}^{-t}P'_0} & \ldots & (-1)^t [y^{t-2}_{red}] &(-1)^t
[y^{t-1}_{red}] \\ & \ldots & & \ldots &
\\ 0& 0& \ldots & d_{{\Sigma}^{-1}P'_{t-1}} & (-1)^t [y^0_{red}] \\
0 & 0 & \ldots & 0 & d_{P'_t}
\end{array} \right) )
\end{displaymath}
$$ \simeq {\Sigma}^{-1}cone(P'_t \overset{h^{(t+1)}} \lra RA_t),
\quad \quad \quad \quad \quad \quad \quad \quad \quad \quad \quad$$
where $h^{(t+1)} = ((-1)^t[{\alpha}_t], (-1)^{t-1} [y^{t-1}_{red}],
\ldots, (-1)^{t-1} [y^0_{red}])^t$.
Each component of $h^{(t+1)}$ is a nonzero morphism in ${\rm
Hom}_{\mathcal {D}}(P'_t,RA_t)$, since the superpotential $W$ is
reduced. Otherwise, the arrow ${\alpha}_t$ will be a linear
combination of paths of length $\geq 2$. It follows that $h^{(t+1)}$
is right minimal. Let $L$ be an arbitrary indecomposable object in
add$A$ and $f = (f_t, [f_{t-1}], \ldots, [f_1], [f_0])^t$ an
arbitrary morphism in ${\rm Hom}_{\mathcal {D}}(L,RA_t)$. Then the
vanishing of $d(f)$
implies that $d(f_t) = -[{\alpha}_0][f_{t-1}] - \ldots -
[{\alpha}_{t-2}][f_1] - [{\alpha}_{t-1}][f_0]$. Since there are no
loops of ${\overline{Q}}^V$ of degree $-t$ at vertex $i$, the map
$f_t$ which is homogeneous of degree $-t$ is a linear combination of
the following forms:

$(i)\,f_t= {\alpha}_t g_0$, where $|g_0| = 0$. In this case, the
differential
$$d(f_t) = d(\alpha_tg_0) = d(\alpha_t)g_0 =
[\alpha_0][y^{t-1}_{red}]g_0+\ldots+[\alpha_{t-1}][y^0_{red}]g_0,$$
which implies that $[f_r]$ is equal to $-[y^r_{red}]g_0 \,(0 \leq r
\leq t-1)$. Then the equalities
\begin{displaymath} f =
\left(\begin{array}{c} f_t \\ {[f_{t-1}]} \\ \ldots \\ {[f_1]} \\
{[f_0]}
\end{array}\right) = \left(\begin{array}{c} \alpha_tg_0 \\
-[y^{t-1}_{red}]g_0 \\ \ldots \\ -[y^1_{red}]g_0 \\
-[y^0_{red}]g_0
\end{array}\right) = \left(\begin{array}{c} (-1)^t \alpha_t \\
(-1)^{t-1}[y^{t-1}_{red}] \\ \ldots \\ (-1)^{t-1}[y^1_{red}] \\
(-1)^{t-1}[y^0_{red}]
\end{array}\right)(-1)^tg_0.
\end{displaymath}
hold. Thus, the morphism $f$ factors through $h^{(t+1)}$.

$(ii) \, f_t = \alpha_r g_{t-r}$, where $|g_{t-r}| = r-t \, (0 \leq
r \leq t-1)$. In these cases, the differentials
\begin{center}
$d(f_t) = d(\alpha_r)g_{t-r} + (-1)^r\alpha_rd(g_{t-r}) =
[\alpha_0][y^{r-1}_{red}]g_{t-r} + \ldots
+[\alpha_{r-1}][y^0_{red}]g_{t-r}+(-1)^r\alpha_rd(g_{t-r}),$
\end{center} which implies that $[f_{t-1}]=-[y^{r-1}_{red}]g_{t-r},
\ldots, [f_{t-r}]=-[y^0_{red}]g_{t-r}$ and
$[f_{t-r-1}]=(-1)^{r+1}d(g_{t-r}).$ Then we have that
\begin{displaymath}
\left(\begin{array}{c} f_t \\
{[f_{t-1}]} \\ \ldots \\ {[f_1]} \\ {[f_0]}
\end{array}\right) = \left(\begin{array}{c} \alpha_rg_{t-r} \\ -[y^{r-1}_{red}]g_{t-r} \\
 \ldots \\ -[y^{0}_{red}]g_{t-r} \\ (-1)^{r+1}d(g_{t-r})\\
0 \\ \ldots \\0
\end{array}\right) = d_{RA_t}\left(\begin{array}{c} 0 \\ 0 \\ \ldots \\ 0 \\ (-1)^tg_{t-r}\\0 \\ \ldots \\ 0
\end{array}\right)
 + \left(\begin{array}{c} 0 \\ 0 \\ \ldots \\ 0 \\ (-1)^tg_{t-r}\\0 \\ \ldots \\ 0
\end{array}\right) d_L
\end{displaymath}
is a zero element in ${\rm Hom}_{\mathcal {D}} (L, RA_t)$.
Therefore, the morphism $h^{(t+1)}$ is a minimal right
(add$A$)-approximation of $RA_t (1 \leq t \leq m)$. Hence, the
isomorphism ${\Sigma}^{-t-1}\varepsilon_{\leq t+1}Y \simeq RA_{t+1}$
holds.
\end{proof}

We further assume that the zeroth homology $H^0 \Pi$ is finite
dimensional. Then the dg algebra $\Pi$ satisfies Assumptions
\ref{23} and moreover it is strongly ($m+2$)-Calabi-Yau.

Since the simple module $S_i$ is zero in the generalized $m$-cluster
category ${\mathcal {C}}_{\Pi} = {\rm per}\Pi/{{\mathcal
{D}}_{fd}(\Pi)}$, the corresponding minimal cofibrant resolution $Y$
also becomes zero in ${\mathcal {C}}_{\Pi}$. Taking truncations of
$Y$, we obtain $m+2$ triangles in ${\mathcal {C}}_{\Pi}$
$$\pi(\varepsilon_{\leq t}Y) \lra 0 \lra \pi(\varepsilon_{\geq t+1}Y) \lra \Sigma
\pi(\varepsilon_{\leq t}Y), \quad \quad 0 \leq t \leq m+1,$$where
$\pi: {\rm per}{\Pi} \ra {\mathcal {C}}_{\Pi}$ is the canonical
projection functor. Therefore, the following theorem holds:

\begin{thm} \label{13}
Under the assumptions in Proposition \ref{11} and the assumption
that $H^0 \Pi$ is finite dimensional, the image of $RA_t$ is
isomorphic to the image of $LA_{m+1-t}$ in the generalized
$m$-cluster category ${\mathcal {C}}_{\Pi}$ for each integer $0 \leq
t \leq m+1$.
\end{thm}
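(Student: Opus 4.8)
The plan is to deduce Theorem~\ref{13} directly from Proposition~\ref{11} together with the vanishing of the simple module $S_i$ in the generalized $m$-cluster category. First I would recall that, by Proposition~\ref{11}, we have isomorphisms ${\Sigma}^{-t}{\varepsilon}_{\leq t}Y \simeq RA_t$ and ${\Sigma}^{-t-1}{\varepsilon}_{\geq t+1}Y \simeq LA_{m+1-t}$ in $\mathcal{D}$ for each $0 \leq t \leq m+1$; applying the canonical projection $\pi \colon {\rm per}\,\Pi \to {\mathcal{C}}_{\Pi}$ gives $\pi({\Sigma}^{-t}{\varepsilon}_{\leq t}Y) \simeq \pi(RA_t)$ and $\pi({\Sigma}^{-t-1}{\varepsilon}_{\geq t+1}Y) \simeq \pi(LA_{m+1-t})$ in ${\mathcal{C}}_{\Pi}$.

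Next I would use the short exact sequence of dg modules $0 \to {\varepsilon}_{\leq t}Y \to Y \to {\varepsilon}_{\geq t+1}Y \to 0$, which gives rise to a triangle ${\varepsilon}_{\leq t}Y \to Y \to {\varepsilon}_{\geq t+1}Y \to {\Sigma}\,{\varepsilon}_{\leq t}Y$ in ${\rm per}\,\Pi$, hence a triangle in ${\mathcal{C}}_{\Pi}$ after applying $\pi$. Since $S_i$ lies in ${\mathcal{D}}_{fd}(\Pi)$ and $Y$ is a cofibrant resolution of $S_i$, the object $\pi(Y) = \pi(S_i)$ vanishes in ${\mathcal{C}}_{\Pi} = {\rm per}\,\Pi / {\mathcal{D}}_{fd}(\Pi)$. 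Therefore the triangle collapses to $\pi({\varepsilon}_{\leq t}Y) \to 0 \to \pi({\varepsilon}_{\geq t+1}Y) \to {\Sigma}\,\pi({\varepsilon}_{\leq t}Y)$, which yields an isomorphism $\pi({\varepsilon}_{\geq t+1}Y) \simeq {\Sigma}\,\pi({\varepsilon}_{\leq t}Y)$ in ${\mathcal{C}}_{\Pi}$.

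Finally I would combine these facts: applying ${\Sigma}^{-t-1}$ to the isomorphism $\pi({\varepsilon}_{\geq t+1}Y) \simeq {\Sigma}\,\pi({\varepsilon}_{\leq t}Y)$ gives $\pi({\Sigma}^{-t-1}{\varepsilon}_{\geq t+1}Y) \simeq \pi({\Sigma}^{-t}{\varepsilon}_{\leq t}Y)$ in ${\mathcal{C}}_{\Pi}$, and the two Proposition~\ref{11} identifications then give $\pi(LA_{m+1-t}) \simeq \pi(RA_t)$ for each $0 \leq t \leq m+1$, as claimed. I do not anticipate a serious obstacle here, since all the hard work is done in Proposition~\ref{11}; the only point to be careful about is that the shift functor on ${\mathcal{C}}_{\Pi}$ is induced from that on ${\rm per}\,\Pi$ and is compatible with $\pi$, so that applying ${\Sigma}^{-t-1}$ commutes with $\pi$, and that the short exact sequence of dg modules genuinely induces the stated triangle in ${\rm per}\,\Pi$ (one uses that ${\varepsilon}_{\leq t}Y$ is a sub-dg-module of $Y$ with the inherited differential). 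Assembling the bookkeeping of signs and shifts is routine.
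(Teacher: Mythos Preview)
Your proposal is correct and follows essentially the same approach as the paper: both use the truncation triangle $\varepsilon_{\leq t}Y \to Y \to \varepsilon_{\geq t+1}Y \to \Sigma\,\varepsilon_{\leq t}Y$, observe that $\pi(Y)\simeq\pi(S_i)=0$ in $\mathcal{C}_\Pi$, and then combine the resulting isomorphism $\pi(\varepsilon_{\geq t+1}Y)\simeq\Sigma\,\pi(\varepsilon_{\leq t}Y)$ with the identifications of Proposition~\ref{11}. Your write-up is in fact slightly more explicit about the underlying short exact sequence of dg modules and the compatibility of $\pi$ with shifts, but the argument is the same.
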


\begin{proof} The following isomorphisms
$$\pi(RA_t) \simeq \pi({\Sigma}^{-t}\varepsilon_{\leq t}Y) \simeq
\pi({\Sigma}^{-t-1}\varepsilon_{\geq t+1}Y) \simeq \pi(LA_{m+1-t})$$
are true in ${\mathcal {C}}_{\Pi}$ for all integers $0 \leq t \leq
m+1$.
\end{proof}

In the presence of loops, the objects $RA_t$ and $LA_r$ do not
always satisfy the relations in Theorem \ref{13}. See the following
example.

\begin{example}\label{46}
Suppose that $m$ is $2$. Let $Q$ be the quiver whose vertex set
$Q_0$ has only one vertex `$\bullet$' and whose arrow set $Q_1$ has
two loops $\alpha$ and $\beta$ of degree $-1$. Then the completed
deformed preprojective dg algebra ${\Pi} = \widehat{\Pi}(Q,4,0)$ has
the underlying graded quiver as follows
\[
{\widetilde{Q}}^V: \quad\quad   \xymatrix  { \bullet
\ar@(ur,lu)[]_{\alpha} \ar@(ur,rd)[]^{\beta} \ar@(ul,ld)[]_{t} }
\]
with $|\alpha| = |\beta| = -1$ and $|t| = -3$. The differential
takes the following values $$d(\alpha) = 0 = d(\beta), \quad \, d(t)
= 2\alpha^2+2\beta^2.$$

The algebra ${\Pi}$ is an indecomposable object in the derived
category ${\mathcal {D}}(\Pi)$. Let $P = {\Pi}$. Then we have the
equality ${\Pi} = P \oplus M$, where $M=0$. Then $LA_r$ is
isomorphic to ${\Sigma}^rP$ and $RA_r$ is isomorphic to
${\Sigma}^{-r}P$ for all $r \geq 0$.

The zeroth homology $H^0{\Pi}$ is one-dimensional and generated by
the trivial path $e_{\bullet}$. Let ${\mathcal {C}}_{{\Pi}}$ be the
generalized $2$-cluster category. We claim that the image of $RA_1$
in ${\mathcal {C}}_{{\Pi}}$ is not isomorphic to the image of
$LA_2$. Otherwise, assume that $\pi(RA_1)$ is isomorphic to
$\pi(LA_2)$. Then the following isomorphisms hold $${\rm
Hom}_{{\mathcal {C}}_{{\Pi}}}(\pi(LA_2),{\Sigma}\pi(LA_2)) \simeq
{\rm Hom}_{{\mathcal {C}}_{{\Pi}}}(\pi(LA_2),{\Sigma}\pi(RA_1))
\simeq {\rm Hom}_{{\mathcal
{C}}_{{\Pi}}}({\Sigma}^2P,{\Sigma}\pi({\Sigma}^{-1}P)) $$ $$ \simeq
{\rm Hom}_{{\mathcal {C}}_{{\Pi}}}({\Sigma}^2P,P) \simeq {\rm
Hom}_{{\mathcal {D}}(\Pi)}({\Sigma}^2P,P) \simeq H^{-2}{\Pi}.$$ The
left end term of these isomorphisms vanishes since $\pi(LA_2)$ is a
$2$-cluster tilting object, while the right end term is a
$3$-dimensional space whose basis is
$\{\alpha^2,\,\alpha\beta,\,\beta\alpha\}$. Therefore, we obtain a
contradiction.
\end{example}

\vspace{.3cm}

\section{Periodicity property}

\begin{lem}\label{18}
Let $A$ be a dg algebra satisfying Assumptions \ref{23}. Let $x$ and
$y$ be two integers satisfying $x \leq y+m+1$. Suppose that the
object $X$ lies in ${\mathcal {D}}^{\leq x} \cap {\rm per}A$ and the
object $Y$ lies in $^{\perp}{\mathcal {D}}^{\leq y} \cap {\rm
per}A$. Then the quotient functor $\pi: {\rm per}A \ra {\mathcal
{C}}_A$ induces an isomorphism
$${\rm Hom}_{\mathcal {D}} (X, Y) \simeq {\rm Hom}_{{\mathcal {C}}_A}
(\pi (X),\pi (Y)).$$
\end{lem}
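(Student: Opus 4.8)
The plan is to use the standard recollement relating $\mathrm{per}A$, $\mathcal{D}_{fd}(A)$ and $\mathcal{C}_A = \mathrm{per}A/\mathcal{D}_{fd}(A)$, together with the Calabi-Yau property to control the two error terms that appear when one computes $\mathrm{Hom}$ in a Verdier quotient. Recall that for $X,Y \in \mathrm{per}A$, a morphism in $\mathcal{C}_A$ from $\pi(X)$ to $\pi(Y)$ is represented by a roof $X \xleftarrow{s} X' \to Y$ with $\mathrm{cone}(s) \in \mathcal{D}_{fd}(A)$; thus it suffices to show that the canonical map $\mathrm{Hom}_{\mathcal{D}}(X,Y) \to \mathrm{Hom}_{\mathcal{C}_A}(\pi(X),\pi(Y))$ is both injective and surjective. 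By the general theory (see e.g. the description used by Amiot in the $2$-Calabi-Yau case), both the kernel and cokernel of this map are controlled by $\mathrm{Hom}_{\mathcal{D}}$-groups between $X$, $Y$ and objects of $\mathcal{D}_{fd}(A)$: concretely, the obstruction to surjectivity lives in a colimit of groups $\mathrm{Hom}_{\mathcal{D}}(X, \Sigma N)$ with $N \in \mathcal{D}_{fd}(A)$ lying in a suitable aisle, and the obstruction to injectivity in a colimit of groups $\mathrm{Hom}_{\mathcal{D}}(\Sigma^{-1}N, Y)$ of the same kind.

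First I would make precise which truncations of objects of $\mathcal{D}_{fd}(A)$ actually occur. If $s: X' \to X$ has cone $N \in \mathcal{D}_{fd}(A)$ and $X \in \mathcal{D}^{\leq x}$, then applying the truncation functors for the standard $t$-structure one may replace $N$ by the part of $N$ with homology in degrees $> x-1$ (the part with homology in degrees $\leq x-1$ can be absorbed because $X$ already lies in $\mathcal{D}^{\leq x}$ — more carefully, one splits the roof through the triangle coming from $\tau_{\leq x-1}N \to N \to \tau_{\geq x}N$). So the relevant groups are $\mathrm{Hom}_{\mathcal{D}}(X, \Sigma N')$ with $N' \in \mathcal{D}_{fd}(A) \cap \mathcal{D}^{\geq x}$, and dually $\mathrm{Hom}_{\mathcal{D}}(\Sigma^{-1} N'', Y)$ with $N'' \in \mathcal{D}_{fd}(A) \cap \mathcal{D}^{\leq y}$.

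The key step is then a vanishing computation. For the surjectivity side: $X \in \mathcal{D}^{\leq x} \cap \mathrm{per}A$ and $N' \in \mathcal{D}_{fd}(A) \cap \mathcal{D}^{\geq x}$. Using Lemma \ref{24}, $D\,\mathrm{Hom}_{\mathcal{D}}(N', \Sigma^{-1} X) \simeq \mathrm{Hom}_{\mathcal{D}}(\Sigma^{-1}X, \Sigma^{m+2}N') = \mathrm{Hom}_{\mathcal{D}}(X, \Sigma^{m+3}N')$; but I prefer the direct route: $\mathrm{Hom}_{\mathcal{D}}(X, \Sigma N')$ — since $N'$ has homology in degrees $\geq x$ and $X$ has homology in degrees $\leq x$, the shift $\Sigma N'$ has homology in degrees $\geq x-1$, and one checks that $\mathrm{Hom}$ from an object with homology in degrees $\leq x$ to one with homology in degrees $\geq x-1$ need not vanish in general, so the CY duality is genuinely needed. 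Applying Lemma \ref{24}, $\mathrm{Hom}_{\mathcal{D}}(X, \Sigma N') \simeq D\,\mathrm{Hom}_{\mathcal{D}}(N', \Sigma^{m+1} X)$; now $N' \in \mathcal{D}^{\geq x}$ forces $\Sigma^{-m-1}N' \in \mathcal{D}^{\geq x+m+1}$, while $X \in \mathcal{D}^{\leq x}$, and since $x + m + 1 > x$ (even $> y+m+1 \geq x$ is not what we want here — rather we just need $x+m+1 \geq x+1$) this $\mathrm{Hom}$-group vanishes by the $t$-structure axioms. Hence surjectivity. For injectivity: $Y \in {}^{\perp}\mathcal{D}^{\leq y} \cap \mathrm{per}A$ and $N'' \in \mathcal{D}_{fd}(A) \cap \mathcal{D}^{\leq y}$, so $\Sigma^{-1}N'' \in \mathcal{D}^{\leq y+1}$; the hypothesis $x \leq y+m+1$ enters precisely here to guarantee, via the $\mathrm{per}A$-membership and another application of Lemma \ref{24}, that $\mathrm{Hom}_{\mathcal{D}}(\Sigma^{-1}N'', Y) = 0$ — indeed $Y \in {}^{\perp}\mathcal{D}^{\leq y}$ kills $\mathrm{Hom}(Y, -)$ into $\mathcal{D}^{\leq y}$, and by CY duality $\mathrm{Hom}(\Sigma^{-1}N'', Y) \simeq D\,\mathrm{Hom}(Y, \Sigma^{m+3}N'')$ with $\Sigma^{m+3}N'' \in \mathcal{D}^{\leq y-m-3} \subseteq \mathcal{D}^{\leq y}$, which vanishes.

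The main obstacle I anticipate is \textbf{bookkeeping the reduction to truncated objects of $\mathcal{D}_{fd}(A)$ cleanly}: making rigorous that an arbitrary roof representing a map in $\mathcal{C}_A$, or an arbitrary morphism killed in $\mathcal{C}_A$, can be refined so that the cone of the relevant comparison morphism lands in the precise aisle $\mathcal{D}_{fd}(A) \cap \mathcal{D}^{\geq x}$ (resp. $\mathcal{D}_{fd}(A) \cap \mathcal{D}^{\leq y}$) needed above. This is the colimit-over-denominators argument; one has to argue that passing to a further denominator only shrinks the cone's homology into the desired range, using that $X$ (resp. $Y$) already satisfies its truncation condition, and that objects of $\mathcal{D}_{fd}(A)$ have bounded homology so the colimits stabilize. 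Once that reduction is in place, the two vanishing statements above are immediate from Lemma \ref{24} and the inequality $x \leq y+m+1$, and the lemma follows.
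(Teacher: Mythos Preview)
Your proposal has a genuine gap in the surjectivity argument. You claim that after reducing the cone to $N' \in \mathcal{D}_{fd}(A) \cap \mathcal{D}^{\geq x}$, Calabi--Yau duality gives
\[
\mathrm{Hom}_{\mathcal D}(X,\Sigma N') \simeq D\,\mathrm{Hom}_{\mathcal D}(N',\Sigma^{m+1}X),
\]
and that the right-hand side vanishes ``by the $t$-structure axioms'' because $N' \in \mathcal{D}^{\geq x}$ and $\Sigma^{m+1}X \in \mathcal{D}^{\leq x-m-1}$. But the $t$-structure axiom gives $\mathrm{Hom}(\mathcal{D}^{\leq n},\mathcal{D}^{\geq n+1})=0$, not $\mathrm{Hom}(\mathcal{D}^{\geq n+1},\mathcal{D}^{\leq n})=0$; you are invoking it in the wrong direction. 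Concretely, take $x=0$, $X=A$ and $N'=\Sigma^{-1}S$ for a simple $S$: then $N'\in\mathcal{D}^{\geq 1}\subset\mathcal{D}^{\geq 0}$, yet $\mathrm{Hom}_{\mathcal D}(A,\Sigma N')=\mathrm{Hom}_{\mathcal D}(A,S)\neq 0$. So the vanishing you assert is simply false. Your own parenthetical remark that the inequality $x\le y+m+1$ ``is not what we want here'' in the surjectivity step should have been a warning: a correct argument must use that inequality and the hypothesis on $Y$ in \emph{both} directions, and yours uses neither for surjectivity. (There is also some confusion about which obstruction group arises: with left roofs $X\xleftarrow{s} X'\to Y$ and $N=\mathrm{cone}(s)$, the obstruction to lifting $X'\to Y$ along $s$ lives in $\mathrm{Hom}(\Sigma^{-1}N,Y)$, not $\mathrm{Hom}(X,\Sigma N)$.)

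The paper's proof fixes exactly this point. For surjectivity it works with a right fraction $X\xrightarrow{f} U \xleftarrow{s} Y$ with $N=\mathrm{cone}(s)$ and proceeds in two stages: first, using Lemma~\ref{24} and the hypothesis $Y\in{}^{\perp}\mathcal{D}^{\leq y}$ together with $x-m-1\le y$, it shows $\mathrm{Hom}_{\mathcal D}(\tau_{\leq x}N,\Sigma Y)=0$; this allows the connecting map $N\to\Sigma Y$ to factor through $\tau_{\geq x+1}N$, and an octahedron-type manoeuvre then replaces the fraction by one whose cone is $\tau_{\geq x+1}N$. Second, the genuine $t$-structure vanishing $\mathrm{Hom}_{\mathcal D}(X,\tau_{\geq x+1}N)=0$ (now in the correct direction, since $X\in\mathcal{D}^{\leq x}$) lets one lift $X\to Z$ to $X\to Y$. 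The injectivity argument is the one you essentially outline, but with $N$ truncated to $\tau_{\leq x}N$ (using $X\in\mathcal{D}^{\leq x}$), not to $\mathcal{D}^{\leq y}$ as you wrote; then $\mathrm{Hom}_{\mathcal D}(\tau_{\leq x}N,Y)\simeq D\,\mathrm{Hom}_{\mathcal D}(Y,\Sigma^{m+2}\tau_{\leq x}N)=0$ because $\Sigma^{m+2}\tau_{\leq x}N\in\mathcal{D}^{\leq x-m-2}\subseteq\mathcal{D}^{\leq y-1}$.
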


\begin{proof}
This proof is quite similar to the proof of Lemma 2.9 given in
\cite{Pla}.

First, we show the injectivity.

Assume that $f: X \ra Y$ is a morphism in $\mathcal {D}$ whose image
in ${\mathcal {C}}_A$ is zero. It follows that $f$ factors through
some $N$ in ${\mathcal {D}}_{fd} (A)$. Let $f = hg$. Consider the
following diagram
\[
\xymatrix{ & X \ar@{.>}[dl] \ar[dr]^g \ar[rr]^f & & Y & & & \\
{\tau}_{\leq x}N \ar[rr] & & N \ar[rr] \ar[ur]^h &  & {\tau}_{\geq
x+1}N \ar[r] & {\Sigma} ({\tau}_{\leq x}N) .}
\] We have that $g$ factors through ${\tau}_{\leq x}N$ because $X \in {\mathcal {D}}^{\leq
x}$ and ${\rm Hom}_{\mathcal {D}}({\mathcal {D}}^{\leq x},
{\tau}_{\geq x+1}N)$ vanishes.

Now since ${\tau}_{\leq x}N$ is still in ${\mathcal {D}}_{fd} (A)$,
by the Calabi-Yau property, the following isomorphism $$D {\rm
Hom}_{\mathcal {D}} ({\tau}_{\leq x}N,Y) \simeq {\rm Hom}_{\mathcal
{D}} (Y, {\Sigma}^{m+2} ({\tau}_{\leq x}N))$$ holds. Since
${\Sigma}^{m+2}({\tau}_{\leq x}N)$ belongs to ${\mathcal {D}}^{\leq
x-m-2} (\subseteq {\mathcal {D}}^{\leq y-1})$, the right hand side
of the above isomorphism is zero. Therefore, the morphism $f$ is
zero in the derived category $\mathcal {D}$.

Second, we show the surjectivity.

Consider an arbitrary fraction $s^{-1}f$ in ${\mathcal {C}}_A$
\[
\xymatrix{X \ar[dr]^f & & Y \ar[dl]_s \\ & U \ar[dl]_r & \\ N & &}
\]
where the cone $N$ of $s$ is in ${\mathcal {D}}_{fd} (A)$. Now look
at the following diagram
\[
\xymatrix{ X \ar[dr]_f \ar@{.>}[r]^{w} & Y \ar[d]^s \ar[dr]^v  & & \\
  & U \ar@{.>}[r]_g \ar[d]^r & Z
\ar[d] &  \\ {\tau}_{\leq x}N \ar[r] & N \ar[r]^{{\pi}_{x+1}\quad}
\ar[d]^u & {\tau}_{\geq x+1}N \ar[r] \ar@{.>}[dl]^h &
{\Sigma}({\tau}_{\leq x}N)
\\ & {\Sigma}Y . & & }
\]
By the Calabi-Yau property, the space ${\rm Hom}_{\mathcal {D}}
({\tau}_{\leq x}N, {\Sigma}Y)$ is isomorphic to $D {\rm
Hom}_{\mathcal {D}} (Y, {\Sigma}^{m+1}({\tau}_{\leq x}N))$, which is
zero since $x-m-1 \leq y$. Thus, there exists a morphism $h$ such
that $u=h \circ {\pi}_{x+1}$. Now we embed $h$ into a triangle in
$\mathcal {D}$ as follows
$$Y \stackrel{v} \lra Z \lra {\tau}_{\geq x+1}N \stackrel{h} \lra
{\Sigma}Y.$$ It follows that the morphism $v$ factors through $s$ by
some morphism $g$. Then we can get a new fraction
\[
\xymatrix{ X \ar[dr]_{g \circ f} & & Y \ar[dl]_{v} \\ & Z  & }
\] where the cone of $v$ is ${\tau}_{\geq x+1}N (\in {\mathcal
{D}}_{fd} (A))$. This fraction is equal to the one we start with
because
$${v^{-1}} (g \circ f) = (g \circ s)^{-1} (g \circ f) \sim s^{-1} f.$$
Moreover, since the space ${\rm Hom}_{\mathcal {D}} (X, {\tau}_{\geq
x+1}N)$ vanishes, there exists a morphism $w: X \ra Y$ such that $g
\circ f = v \circ w$. Therefore, the fraction above is exactly the
image of $w$ in ${\rm Hom}_{\mathcal {D}}(X,Y)$ under the quotient
functor $\pi$.
\end{proof}

Note that in the assumptions of the above lemma, we do not
necessarily suppose that the objects $X$ and $Y$ lie in some shifts
of the fundamental domain.

A special case of Lemma \ref{18} is that, if $X$ lies in ${\mathcal
{D}}^{\leq m} \cap {\rm per}A$, then the quotient functor $\pi: {\rm
per}A \ra {\mathcal {C}}_A$ induces an isomorphism
$${\rm Hom}_{\mathcal {D}} (X, RA_t) \simeq {\rm Hom}_{{\mathcal {C}}_A}
(\pi (X),\pi (RA_t))$$ for any nonnegative integer $t$, where $RA_t$
belongs to $^{\perp} {\mathcal {D}}^{\leq -1}$.

\begin{thm} \label{6} Under the assumptions of Theorem \ref{13},
for each positive integer $t$,

{\rm 1)} the image of $RA_t$
is isomorphic to the image of $RA_{t ({\rm mod} \, m+1)}$ in
${\mathcal {C}}_{\Pi}$,

{\rm 2)} the image of $LA_t$
is isomorphic to the image of $LA_{t ({\rm mod} \, m+1)}$ in
${\mathcal {C}}_{\Pi}$.
\end{thm}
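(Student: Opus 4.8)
The plan is to establish periodicity by combining the ``reflection'' identity of Theorem \ref{13} with its dual, iterating them to slide the index by $m+1$ at each full cycle. First I would record the two basic isomorphisms in ${\mathcal {C}}_{\Pi}$ available under the standing assumptions: from Theorem \ref{13} applied to the $P$-indecomposable $P_i$ we have $\pi(RA_t) \simeq \pi(LA_{m+1-t})$ for $0 \leq t \leq m+1$; in particular $\pi(RA_0) \simeq \pi(LA_{m+1})$ and $\pi(RA_{m+1}) \simeq \pi(LA_0)$. Since $RA_0 = LA_0 = P_i$, the first of these already says $\pi(RA_0) \simeq \pi(LA_{m+1})$, and symmetrically $\pi(LA_0) \simeq \pi(RA_{m+1})$.

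The key observation is that Theorem \ref{13} is stated for the specific starting object $P_i$, but the silting mutation construction is ``local'': once we have a silting object $M \oplus RA_{m+1}$, the dg endomorphism algebra $\Gamma = \mathrm{Hom}_A^\bullet(M \oplus RA_{m+1}, M \oplus RA_{m+1})$ again satisfies Assumptions \ref{23} and is again strongly $(m+2)$-Calabi-Yau (this is exactly the mechanism used in the proof of Theorem \ref{25} and Theorem \ref{26}, and the strongly Calabi-Yau part is promised by Section 7 of the excerpt). Under the Morita equivalence $F = - \overset{L}{\otimes}_\Gamma (M \oplus RA_{m+1})$, the $P$-indecomposable of $\Gamma$ maps to $RA_{m+1}$, and iterated right (resp.\ left) mutations of that $P$-indecomposable map to $RA_{m+1+s}$ (resp.\ to the left mutations of $RA_{m+1}$). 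Hence Theorem \ref{13}, applied inside ${\mathcal {C}}_\Gamma \simeq {\mathcal {C}}_\Pi$, tells us that in ${\mathcal {C}}_\Pi$ the object $\pi(RA_{m+1})$ and $\pi(RA_0) = \pi(LA_0)$ agree, and more generally that the whole bi-infinite sequence of mutation images is $(m+1)$-periodic. Concretely: I would prove by induction on $t$ that $\pi(RA_{t+m+1}) \simeq \pi(RA_t)$ and $\pi(LA_{t+m+1}) \simeq \pi(LA_t)$, the base case $t=0$ being $\pi(RA_{m+1}) \simeq \pi(LA_0) = \pi(RA_0)$ from Theorem \ref{13}, and the inductive step following by transporting the base case through the Morita equivalence attached to the silting object $M \oplus RA_t$ (whose truncated degree-zero dg endomorphism algebra remains good completed deformed preprojective and strongly Calabi-Yau, again by Section 7).

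Once $\pi(RA_{m+1}) \simeq \pi(RA_0)$ is known, an alternative and perhaps cleaner route to the full statement avoids repeatedly invoking Morita equivalences: I would instead show directly that the right-mutation triangle $RA_{t+1} \to A^{(t+1)} \to RA_t \to \Sigma RA_{t+1}$ is sent by $\pi$, after the appropriate identification, to the right-mutation triangle starting from $\pi(RA_t) \simeq \pi(RA_{t \bmod m+1})$; since minimal right $(\mathrm{add}\,M)$-approximations and their cones are determined up to isomorphism, and since $\pi$ is a triangle functor which by Lemma \ref{18} is fully faithful on the relevant pieces (note $RA_t \in {}^\perp{\mathcal {D}}^{\leq -1}$ and $A^{(t+1)} \in {\mathcal {D}}^{\leq 0} \subseteq {\mathcal {D}}^{\leq m}$, so the approximation morphism and its minimality are detected in ${\mathcal {C}}_\Pi$), the isomorphism class of $\pi(RA_{t+1})$ is a function of the isomorphism class of $\pi(RA_t)$ alone. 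Periodicity of the sequence $(\pi(RA_t))_{t \geq 0}$ then propagates from the single coincidence $\pi(RA_{m+1}) \simeq \pi(RA_0)$, and statement 2) for $LA_t$ follows by the dual argument (or from statement 1) together with Theorem \ref{13}, which interchanges the two families).

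\textbf{Main obstacle.} The delicate point is the compatibility in the last paragraph: one must check that the image under $\pi$ of a minimal right $(\mathrm{add}\,M)$-approximation in ${\rm per}\,\Pi$ is again a minimal right approximation in ${\mathcal {C}}_\Pi$ with respect to the image of $\mathrm{add}\,M$, so that the mutation recursion genuinely descends to ${\mathcal {C}}_\Pi$. This requires verifying that $\pi$ restricted to the relevant truncation classes is full and faithful (via Lemma \ref{18}, after locating all objects $A^{(t+1)}$, $RA_t$ in suitable ${\mathcal {D}}^{\leq x} \cap {}^\perp{\mathcal {D}}^{\leq y}$ with $x \leq y+m+1$, which is exactly what Propositions \ref{15} and \ref{27} supply), and that minimality is preserved — i.e.\ that no summand of $A^{(t+1)}$ becomes a summand of $\pi(RA_{t+1})$ or $\pi(RA_t)$ in the quotient. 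The absence of loops at the vertex $i$, which is already the hypothesis of Proposition \ref{11}, is what rules out such degeneracies, paralleling the role it plays there; making this reduction precise is the part that demands care rather than new ideas.
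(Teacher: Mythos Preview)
Your ``alternative and perhaps cleaner route'' is precisely the paper's proof: from Theorem \ref{13} one has $\pi(RA_{m+1}) \simeq \pi(LA_0) = \pi(RA_0)$, and then one inducts on $t$ by using Lemma \ref{18} (with $L \in \mathrm{add}\,M \subseteq \mathcal{D}^{\leq 0}$ and $RA_t \in {}^\perp\mathcal{D}^{\leq -1}$) to see that $\pi(f^{(t+1)})$ remains a minimal right $(\mathrm{add}\,M)$-approximation in $\mathcal{C}_\Pi$, so the isomorphism class of $\pi(RA_{t+1})$ is determined by that of $\pi(RA_t)$ alone. Your first approach, via Morita equivalence with the endomorphism algebra of $M \oplus RA_{m+1}$, is an unnecessary detour and in fact has a gap: to reapply Theorem \ref{13} you would need to know not only that the truncated endomorphism algebra is again a good completed deformed preprojective dg algebra (Section~7) but also that the new quiver $Q'$ has no loop at the relevant vertex --- and that is exactly the open point underlying Conjecture \ref{43}, not something established in the paper.

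Your ``main obstacle'' slightly mischaracterizes minimality: right minimality of $f: A' \to X$ means every $g \in \mathrm{End}(A')$ with $fg = f$ is invertible, not that $A'$ and $X$ share no summand. But the real question --- whether $\pi$ preserves minimality --- is settled by the same Hom-isomorphism: Lemma \ref{18} gives $\mathrm{Hom}_{\mathcal{D}}(A^{(t+1)}, RA_t) \simeq \mathrm{Hom}_{\mathcal{C}_\Pi}(A^{(t+1)}, \pi(RA_t))$ and (since $A^{(t+1)} \in \mathcal{F}$) also $\mathrm{End}_{\mathcal{D}}(A^{(t+1)}) \simeq \mathrm{End}_{\mathcal{C}_\Pi}(A^{(t+1)})$, so any $g$ witnessing non-minimality in $\mathcal{C}_\Pi$ would lift to $\mathcal{D}$. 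No appeal to the absence of loops is needed at this step; that hypothesis is consumed entirely in Proposition \ref{11} and Theorem \ref{13}.
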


\begin{proof}
We only show the first statement. Then the second one can be
obtained similarly.

Following Theorem \ref{13}, the image of $RA_{m+1}$ in ${\mathcal
{C}}_{\Pi}$ is isomorphic to $P$, which is $RA_0$ by definition. Let
us denote `$t \,({\rm mod} \, m+1)$' by $\overline{t}$. We prove the
statement by induction.

Assume that the image of $RA_t$ is isomorphic to the image of
$RA_{\overline{t}}$ in ${\mathcal {C}}_{\Pi}$. Consider the
following two triangles in ${\mathcal {D}}(\Pi)$
$$RA_{t+1} \lra A^{(t+1)} \stackrel{f^{(t+1)}} \lra RA_t \lra
{\Sigma}RA_{t+1},$$ $$RA_{\overline{t+1}} \lra A^{(\overline{t+1})}
\stackrel{f^{(\overline{t+1})}} \lra RA_{\overline{t}} \lra
{\Sigma}RA_{\overline{t+1}},$$and also consider their images in
${\mathcal {C}}_{\Pi}$. By Lemma \ref{18}, the isomorphism
$${\rm Hom}_{{\mathcal {D}}(\Pi)}(L,RA_t) \simeq {\rm Hom}_{{\mathcal
{C}}_{\Pi}}(L,\pi(RA_t))$$ holds for any object $L \in {\rm add}M$
and any nonnegative integer $t$. Hence, the images $\pi(f^{(t+1)})$
and $\pi(f^{(\overline{t+1})})$ are minimal right
(add$M$)-approximations of $\pi(RA_{t})$ and
$\pi(RA_{\overline{t}})$ in ${\mathcal {C}}_{\Pi}$, respectively. By
hypothesis, $\pi(RA_t)$ is isomorphic to $\pi(RA_{\overline{t}})$.
Therefore, the objects $A^{(t+1)}$ and $A^{(\overline{t+1})}$ are
isomorphic, and $\pi(RA_{t+1})$ is isomorphic to
$\pi(RA_{\overline{t+1}})$ in ${\mathcal {C}}_{\Pi}$. This completes
the statement.
\end{proof}

\begin{rem}
Section 10 in \cite{IY08} gave a class of ($2n+1$)-Calabi-Yau (only
for even integers $2n$, not for all integers $m \geq 2$)
triangulated categories (arising from certain Cohen-Macaulay rings)
which contain infinitely many indecomposable $2n$-cluster tilting
objects.
\end{rem}

In the following, for every integer $m \geq 2$, we construct an
($m+1$)-Calabi-Yau triangulated category which contains infinitely
many indecomposable $m$-cluster tilting objects.

When $m = 2$, we use the same quiver $Q$ as in Example \ref{46}.

When $m > 2$, let $Q$ be the quiver consisting of one vertex
$\bullet$ and one loop $\alpha$ of degree $-1$.

Let $\Pi = {\widehat{\Pi}}(Q,m+2,0)$ be the associated completed
deformed preprojective dg algebra. Clearly, $\Pi$ is an
indecomposable object in the derived category ${\mathcal {D}}(\Pi)$,
the zeroth homology $H^0 \Pi$ is one-dimensional and the path
${\alpha}^s$ is a nonzero element in the homology $H^{-s} \Pi$ ($s
\in {\mathbb{N}}^{\ast}$). Let ${\mathcal {C}}_{\Pi}$ be the
generalized $m$-cluster category and $\pi: \mbox{per} \Pi \ra
{\mathcal {C}}_{\Pi}$ the canonical projection functor. Set $P =
\Pi$. Then $\Pi = P \oplus 0$. For each integer $t \geq 0$, the
object $LA_t$ is isomorphic to ${\Sigma}^tP$ and the object $RA_t$
is isomorphic to ${\Sigma}^{-t}P$. Now we claim that
\begin{itemize}
\item[1)] For any two integers $r > t \geq 0$, the object
$\pi(RA_r)$ is not isomorphic to $\pi(RA_t)$ in ${\mathcal
{C}}_{\Pi}$, and the object $\pi(LA_r)$ is not isomorphic to
$\pi(LA_t)$ in ${\mathcal {C}}_{\Pi}$.

\item[2)] For any two integers $r_1, r_2 \geq 0$, the objects
$\pi(RA_{r_1})$ and $\pi(LA_{r_2})$ are not isomorphic in ${\mathcal
{C}}_{\Pi}$.
\end{itemize}

Otherwise, similarly as in Example \ref{46}, the following
contradictions will appear
$$(0 = )\, {\rm Hom}_{{\mathcal {C}}_{\Pi}}(\pi(RA_t), {\Sigma}\pi(RA_t)) = {\rm Hom}_{{\mathcal
{C}}_{{\Pi}}}(\pi(RA_t),{\Sigma}\pi(RA_r))  \simeq {\rm
Hom}_{{\mathcal {C}}_{{\Pi}}}({\Sigma}^{-t}P,{\Sigma}^{1-r}P) $$ $$
\simeq {\rm Hom}_{{\mathcal {C}}_{{\Pi}}}(P,{\Sigma}^{t-r+1}P)
\simeq {\rm Hom}_{{\mathcal {D}}(\Pi)}(P,{\Sigma}^{t-r+1}P) \simeq
H^{t-r+1}{\Pi}\, ( \neq 0);$$ $$(0 = ) \, {\rm Hom}_{{\mathcal
{C}}_{{\Pi}}}(\pi(LA_r),{\Sigma}\pi(LA_r)) ={\rm Hom}_{{\mathcal
{C}}_{{\Pi}}}(\pi(LA_r),{\Sigma}\pi(LA_t))  \simeq {\rm
Hom}_{{\mathcal {C}}_{{\Pi}}}({\Sigma}^rP,{\Sigma}^{t+1}P) $$ $$
\simeq {\rm Hom}_{{\mathcal {C}}_{{\Pi}}}(P,{\Sigma}^{t-r+1}P)
\simeq {\rm Hom}_{{\mathcal {D}}(\Pi)}(P,{\Sigma}^{t-r+1}P) \simeq
H^{t-r+1}{\Pi} \,( \neq 0);$$ $$(0 = ) \,  {\rm Hom}_{{\mathcal
{C}}_{{\Pi}}}(\pi(LA_{r_2}),{\Sigma}\pi(LA_{r_2})) = {\rm
Hom}_{{\mathcal {C}}_{{\Pi}}}(\pi(LA_{r_2}),{\Sigma}\pi(RA_{r_1}))
\simeq {\rm Hom}_{{\mathcal
{C}}_{{\Pi}}}({\Sigma}^{r_2}P,{\Sigma}^{1-{r_1}}P) $$ $$ \simeq {\rm
Hom}_{{\mathcal {C}}_{{\Pi}}}(P,{\Sigma}^{1-{r_1}-{r_2}}P)  \simeq
{\rm Hom}_{{\mathcal {D}}(\Pi)}(P,{\Sigma}^{1-{r_1}-{r_2}}P) \simeq
H^{1-{r_1}-{r_2}}{\Pi} \,( \neq 0);$$ where the left end terms
become zero, the right end terms are nonzero since $t-r+1 \leq 0$
and $1 - {r_1} - {r_2} < 0$, and the isomorphism $${\rm
Hom}_{{\mathcal {C}}_{{\Pi}}}(P,{\Sigma}^{-s}P) \simeq {\rm
Hom}_{{\mathcal {D}}(\Pi)}(P,{\Sigma}^{-s}P)$$ holds for any $s \in
{\mathbb{N}}$ following Lemma \ref{18}.

Therefore, the ($m+1$)-Calabi-Yau triangulated category ${\mathcal
{C}}_{\Pi}$ contains infinitely many $m$-cluster tilting objects,
and the objects $\pi(RA_t)$ and $\pi(LA_r)$ do not satisfy the
relations in Theorem \ref{13} and Theorem \ref{6} in the presence of
loops.

\vspace{.3cm}

\section{AR ($m+3$)-angles related to $P$-indecomposables}

Let $\mathcal {T}$ be an additive Krull-Schmidt category. We denote
by $J_{\mathcal {T}}$ the {\em Jacobson radical} \cite{ARS} of
$\mathcal {T}$.
Let $f \in {\mathcal {T}} (X,Y)$ be a morphism. Then $f$ is called
(in \cite{IY08}) a {\em sink map} of $Y \in {\mathcal {T}}$ if $f$
is right minimal, $f \in J_{\mathcal {T}}$, and $${\mathcal {T}}
(-,X) \stackrel{f \cdot} \lra J_{\mathcal {T}} (-,Y) \lra 0$$ is
exact as functors on $\mathcal {T}$. The definition of {\em source
maps} is given dually.

Let $n$ be a positive integer. Given $n$ triangles in a triangulated
category,
$$X_{i+1} \stackrel{b_{i+1}} \ra B_i \stackrel{a_i}\ra X_i \ra
{\Sigma} X_{i+1}, \quad 0 \leq i < n,$$ the complex
$$X_n \stackrel{b_{n}} \ra
B_{n-1} \stackrel{b_{n-1} a_{n-1}} \lra B_{n-2} \ra \ldots \ra B_1
\stackrel{b_{1} a_1} \lra B_0 \stackrel{a_0} \ra X_0 $$ is called
(in \cite{IY08}) an {\em $(n+2)$-angle}.

\begin{defn}[\cite{IY08}] \label{10}
Let $H$ be an $m$-cluster tilting object in a Krull-Schmidt
triangulated category. We call an $(m+3)$-angle with $X_0, X_{m+1}$
and all $B_i (0 \leq i \leq m)$ in add$H$ an {\em AR $(m+3)$-angle}
if the following conditions are satisfied
\begin{itemize}
\item[a)] $a_0$ is a sink map of $X_0$ in add$H$ and $b_{m+1}$
is a source map of $X_{m+1}$ in add$H$, and
\item[b)] $a_i$ (resp. $b_i$) is a minimal right (resp. left) (add$H$)-approximation of $X_i$ for each integer $1 \leq i \leq m$.
\end{itemize}
\end{defn}

\begin{rem}
An AR $(m+3)$-angle with right term $X_0$ (resp. left term
$X_{m+1}$) depends only on $X_0$ (resp. $X_{m+1}$) and is unique up
to isomorphism as a complex.
\end{rem}

We will use the AR angle theory to show the following theorem, which
gives a more virtual criterion than Theorem 5.8 in \cite{IY08} for
our case.

\begin{thm} \label{16}
Let $\Pi$ be a good completed deformed preprojective dg algebra
$\widehat{\Pi}(Q,m+2,W)$ and $i$ a vertex of $Q$. Assume that the
zeroth homology $H^0 \Pi$ is finite dimensional and there are no
loops of $Q$ at vertex $i$. Then the almost complete $m$-cluster
tilting $P$-object $\Pi/{e_i\Pi}$ has exactly $m+1$ complements in
the generalized $m$-cluster category ${\mathcal {C}}_{\Pi}$.
\end{thm}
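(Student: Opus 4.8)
The plan is to combine the periodicity property from Theorem~\ref{6} with the structure of Iyama-Yoshino AR $(m+3)$-angles, following the strategy outlined in Remark~\ref{28} but now making it quantitative. By Theorem~\ref{20} we already know $\Pi/{e_i\Pi}$ has at least $m+1$ complements in ${\mathcal{C}}_{\Pi}$, namely $\pi(P)=\pi(LA_0),\pi(LA_1),\dots,\pi(LA_m)$ (equivalently, by Theorem~\ref{13}, $\pi(RA_0),\dots,\pi(RA_m)$, since $\pi(RA_t)\simeq\pi(LA_{m+1-t})$). So the real content is the upper bound: every complement of $\pi(M)$ (where $M=\Pi/{e_i\Pi}$) is isomorphic to one of these $m+1$ objects.

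First I would recall, via Theorem~\ref{5}, that every basic silting object of ${\rm per}\Pi$ containing $M$ as a summand is of the form $M\oplus RA_t$ or $M\oplus LA_t$ for some $t\geq 0$; applying $\pi$ and using Theorem~\ref{25} these all give $m$-cluster tilting objects in ${\mathcal{C}}_{\Pi}$ containing $\pi(M)$. By the periodicity Theorem~\ref{6} together with Theorem~\ref{13}, the images $\pi(RA_t)$ and $\pi(LA_t)$ for $t\in{\mathbb{Z}}_{\geq 0}$ collapse to exactly the $m+1$ objects $\pi(LA_0),\dots,\pi(LA_m)$. Hence \emph{liftable} complements of $\pi(M)$ number exactly $m+1$. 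The remaining task is to show there are no non-liftable complements, and this is where the AR angle machinery enters. Given an arbitrary complement $X'$ of $\pi(M)$, form the unique (up to isomorphism) AR $(m+3)$-angle of Definition~\ref{10} with right term $X_0=\pi(P)$ inside $\mathrm{add}(\pi(M)\oplus \pi(P))$; the key point is that its terms can be computed explicitly from the truncations $\varepsilon_{\leq t}Y$ of the minimal cofibrant resolution $Y$ of $S_i$ constructed in Section~4. Concretely, I expect that applying $\pi$ to the $m+2$ triangles $\pi(\varepsilon_{\leq t}Y)\to 0\to\pi(\varepsilon_{\geq t+1}Y)\to\Sigma\pi(\varepsilon_{\leq t}Y)$, after shifting by Proposition~\ref{11}, assembles into precisely the AR $(m+3)$-angle
\[
\pi(LA_m)\to \pi(B_{m-1})\to\cdots\to\pi(B_1)\to\pi(M\text{-part})\to\pi(P),
\]
with middle terms in $\mathrm{add}\,\pi(M)$, exhibiting $\pi(LA_1),\dots,\pi(LA_m)$ as the objects $X_1,\dots,X_m$ occurring in it. By Iyama-Yoshino's Theorem~4.9 (as recalled in Remark~\ref{28}), the $m$-cluster tilting objects of ${\mathcal{C}}_{\Pi}$ containing $\pi(M)$ are in bijection with $m$-cluster tilting objects of the smaller category $\mathcal{U}=\mathcal{Z}/\mathcal{Y}$, and the complements of $\pi(M)$ are exactly the indecomposables appearing as some $X_j$ in the AR angle together with $X_0=\pi(P)$ — that is, exactly $m+1$ of them.

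The main obstacle will be verifying that the truncated minimal cofibrant resolution really does produce the AR $(m+3)$-angle in ${\mathcal{C}}_{\Pi}$ — i.e.\ checking conditions (a) and (b) of Definition~\ref{10}: that $a_0$ is a sink map of $\pi(P)$ in $\mathrm{add}(\pi(M)\oplus\pi(P))$, that $b_{m+1}$ is a source map of $\pi(LA_m)$, and that the intermediate maps are minimal (add)-approximations. For the approximation/minimality statements I would transport the corresponding facts about $h^{(t+1)}$ being a minimal right $(\mathrm{add}M)$-approximation of $RA_t$ in ${\rm per}\Pi$ (established inside the proof of Proposition~\ref{11}) across the quotient functor $\pi$, using Lemma~\ref{18} to guarantee that these Hom-spaces and their minimality are preserved — this is legitimate because $RA_t\in{}^{\perp}{\mathcal{D}}^{\leq -1}$ and the relevant source objects lie in ${\mathcal{D}}^{\leq m}\cap{\rm per}\Pi$. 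The sink/source map conditions at the two ends require the absence of loops at $i$: this is exactly what guarantees $J_{\mathcal{T}}$-membership and right-minimality of $a_0$, paralleling the role the no-loop hypothesis plays in Example~\ref{46}, where loops break the $RA_t\simeq LA_{m+1-t}$ relation and hence the whole count. Once the AR $(m+3)$-angle is identified, the bijection in \cite{IY08} closes the argument, and the equality ``exactly $m+1$'' follows.
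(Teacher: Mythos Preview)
Your strategy is the paper's: assemble the AR $(m+3)$-angle for $P_i$ in ${\mathcal A}={\rm add}\,\pi(\Pi)$ from the mutation triangles, verify Definition~\ref{10} by transporting the approximation properties of $h^{(t+1)}$ and $\alpha^{(t)}$ across $\pi$ (the paper does this via the shifted fundamental domain $\Sigma^{-m}{\mathcal F}$ rather than Lemma~\ref{18}, but either works), and then invoke \cite{IY08}. Two corrections, however.

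First, your angle is misdrawn: both endpoints $X_0$ and $X_{m+1}$ of an AR $(m+3)$-angle must lie in ${\rm add}\,\pi(\Pi)$, so $X_{m+1}=\pi(RA_{m+1})\simeq\pi(P_i)$ by Theorem~\ref{13}, not $\pi(LA_m)$ (which is a complement of $\pi(M)$ and not in ${\rm add}\,\pi(\Pi)$ in general). The source-map condition (a) is therefore a condition at $P_i$, and the paper checks it (Step~4) by identifying the image of the $RA_{m+1}$-triangle with the image of the $LA_1$-triangle in ${\mathcal C}_\Pi$ and verifying that $g^{(1)}$ is a source map in ${\mathcal A}$; this step genuinely uses Theorem~\ref{13}, which you have available. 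Second, the result you need from \cite{IY08} is Theorem~5.8, not Theorem~4.9: once the AR angle is identified and one observes $P_i\notin{\rm add}(\bigoplus_{t=0}^m P'_t)$ (automatic from the no-loop hypothesis, since each $P'_t$ for $t\le m$ is a sum of $P_{s(\rho)}$ with $s(\rho)\neq i$), Theorem~5.8 yields directly that the complements are exactly $\pi(RA_0),\dots,\pi(RA_m)$. Your liftability detour is then unnecessary --- the AR-angle count bounds \emph{all} complements at once, liftable or not.
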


\begin{proof}
Set $RA_0 = P_i = e_i\Pi$ and $M = \Pi/{e_i\Pi}$. Section 4 gives us
a construction of iterated mutations $RA_t$ of $P_i$ in the derived
category ${\mathcal {D}}(\Pi)$, that is, the morphism $h^{(1)}: P'_0
\ra P_i$ is a minimal right (add$M$)-approximation of $P_i$, and
morphisms $h^{(t+1)}: P'_t \ra RA_t \, (1 \leq t \leq m)$ are
minimal right (add$A$)-approximations of $RA_{t}$ with $P'_t$ in
add$M$. Let $\mathcal {A}$ (resp. $\mathcal {M}$) denote the
subcategory add$\pi(\Pi)$ (resp. add$\pi(M)$) in the generalized
$m$-cluster category ${\mathcal {C}}_{\Pi}$.

\smallskip

Step 1. Since $P'_0,\,P_i$ and $M$ are in the fundamental domain,
the morphism $h^{(1)}$ can be viewed as a minimal right $\mathcal
{M}$-approximation in ${\mathcal {C}}_{\Pi}$, that is, the sequence
$${\mathcal {A}}(-,P'_0)|_{\mathcal {M}} \stackrel{h^{(1)} \cdot}\lra
{\mathcal {A}}(-,P_i)|_{\mathcal {M}} = J_{\mathcal
{A}}(-,P_i)|_{\mathcal {M}} \ra 0,$$ is exact as functors on
$\mathcal {M}$. Since there are no loops of ${\overline{Q}}^V$ of
degree zero at vertex $i$, the Jacobson radical of ${\rm
End}_{\mathcal {A}}(P_i) \, (\simeq {\rm End}_{{\mathcal
{D}}(\Pi)}(P_i))$ consists of combinations of cyclic paths $p = a_1
\ldots a_r \,(r \geq 2)$ of ${\overline{Q}}^V$ of degree zero. The
path $p$ factors though $e_{s(a_1)}\Pi$ and factors through
$h^{(1)}$. Therefore, we have an exact sequence
$${\mathcal {A}}(P_i, P'_0) \stackrel{h^{(1)} \cdot}\lra {\rm rad} \, {\rm End}_{\mathcal {A}}(P_i) \lra
0.$$ Thus, the morphism $h^{(1)}$ is a sink map in the subcategory
$\mathcal {A}$.

\smallskip

Step 2. The morphisms $h^{(t+1)} \, (1 \leq t \leq m)$ are minimal
right (add$A$)-approximations of $RA_{t}$ with $P'_t$ in add$M$.
Since the objects $RA_t (1 \leq t \leq m)$ and $P'_t$ lie in the
shift ${\Sigma}^{-m}{\mathcal {F}}$ of the fundamental domain by
Proposition \ref{15}, the images of $h^{(t+1)}$ are minimal right
$\mathcal {A}$-approximations in ${\mathcal {C}}_{\Pi}$.

\smallskip

Step 3. Consider the morphisms $\alpha^{(t)}$ in the triangles of
constructing $RA_t$ in ${\mathcal {D}}(\Pi)$
$${\Sigma}^{-1}RA_{t-1} \lra RA_t \stackrel{\alpha^{(t)}}\lra P'_{t-1}
 \stackrel{h^{(t)}} \lra RA_{t-1}, \quad 1 \leq t \leq m.$$
We already know that the maps $\alpha^{(t)}$ are minimal left
(add$M$)-approximations in ${\mathcal {D}}(\Pi)$. Now applying the
functor ${\rm Hom}_{{\mathcal {D}}(\Pi)}(-,P_i)$ to the above
triangles, we obtain long exact sequences $$\ldots \ra {\rm
Hom}_{{\mathcal {D}}(\Pi)}(P'_{t-1},P_i) \lra {\rm Hom}_{{\mathcal
{D}}(\Pi)}(RA_t,P_i) \lra {\rm Hom}_{{\mathcal
{D}}(\Pi)}({\Sigma}^{-1}RA_{t-1},P_i) \ra \ldots.$$The terms ${\rm
Hom}_{{\mathcal {D}}(\Pi)}({\Sigma}^{-1}RA_{t-1},P_i)$ are zero
since all $RA_{t-1}$ lie in $^{\perp}{{\mathcal {D}}(\Pi)}^{\leq
-1}$. Hence, the morphisms $\alpha^{(t)}$ are minimal left
(add$A$)-approximations in ${\mathcal {D}}(\Pi)$. Since the objects
$RA_t (1 \leq t \leq m)$ and $P'_t$ lie in the shift
${\Sigma}^{-m}{\mathcal {F}}$, the images of $\alpha^{(t)}$ are
minimal left $\mathcal {A}$-approximations in ${\mathcal
{C}}_{\Pi}$.

\smallskip

Step 4. Consider the following two triangles in ${\mathcal
{D}}(\Pi)$
$$RA_{m+1} \stackrel{\alpha^{(m+1)}}\lra P'_m \stackrel{h^{(m+1)}} \lra RA_m \lra
{\Sigma}RA_{m+1},$$ $$P_i \stackrel{g^{(1)}} \lra P'_m
\stackrel{\beta^{(1)}}\lra LA_1 \lra {\Sigma}P_i.$$ Since the
objects $P_i,\,P'_m$ and $LA_{1}$ are in the fundamental domain
$\mathcal {F}$, the second triangle can also be viewed as a triangle
in ${\mathcal {C}}_{\Pi}$ and the morphism $\beta^{(1)}$ is a
minimal right $\mathcal {M}$-approximation of $LA_1$. Note that the
objects $RA_m$ and $P'_m$ belong to ${\Sigma}^{-m}{\mathcal {F}}$.
Hence, the image of the first triangle
$$\pi(RA_{m+1}) \stackrel{\pi({\alpha}^{(m+1)})} \lra P'_m
\stackrel{\pi(h^{(m+1)})} \lra \pi(RA_m) \lra
{\Sigma}\pi(RA_{m+1})$$ is a triangle in ${\mathcal {C}}_{\Pi}$ with
$\pi(h^{(m+1)})$ a minimal right ${\mathcal {M}}$-approximation of
$\pi(RA_m)$. By Theorem \ref{13}, the image of $RA_m$ is isomorphic
to the image of $LA_1$ in ${\mathcal {C}}_{\Pi}$. Thus, the images
of these two triangles in ${\mathcal {C}}_{\Pi}$ are isomorphic. We
can also check that $g^{(1)}$ is a source map in $\mathcal {A}$ as
Step 1. Therefore, the image $\pi(\alpha^{(m+1)})$ is also a source
map in $\mathcal {A}$ with $\pi(RA_{m+1})$ isomorphic to $P_i$ in
${\mathcal {C}}_{\Pi}$.

\smallskip

Step 5. Now we form the following $(m+3)$-angle in ${\mathcal
{C}}_{\Pi}$
\[
\xymatrix{P_i = \pi(RA_{m+1}) \ar[r]^{\quad \quad \quad
\varphi_{m+1}} & P'_m \ar[r]^{\varphi_m} & P'_{m-1} \ar[r] & \ldots
\ar[r] & P'_1 \ar[r]^{\varphi_1} & P'_0 \ar[r]^{\varphi_0} & P_i,
\,}
\]
where $\varphi_0$ is equal to $\pi(h^{(1)})$, the morphism
$\varphi_t \, (1 \leq t \leq m)$ is the composition
$\pi(\alpha^{(t)})\pi(h^{(t+1)})$, and $\varphi_{m+1}$ is equal to
$\pi(\alpha^{(m+1)})$. From the above four steps, we know that
$\varphi_0$ is a sink map in $\mathcal {A}$, and $\varphi_{m+1}$ is
a source map in $\mathcal {A}$. Furthermore, this $(m+3)$-angle is
the AR $(m+3)$-angle determined by $P_i$. Since the indecomposable
object $P_i$ does not belong to add$({\oplus}^m_{t=0}P'_t)$,
following Theorem 5.8 in \cite{IY08}, the almost complete
$m$-cluster tilting $P$-object $\Pi/{e_i\Pi}$ has exactly $m+1$
complements $e_i \Pi,\,\pi(RA_1), \ldots, \pi(RA_m)$ in ${\mathcal
{C}}_{\Pi}$. The proof is completed.
\end{proof}

\vspace{.3cm}

\section{Liftable almost complete $m$-cluster tilting objects \\ for strongly ($m+2$)-Calabi-Yau case}
Keep the assumptions as in Theorem \ref{16}. Let $\Pi =
{\widehat{\Pi}(Q,m+2,W)}$. Let $Y$ be a liftable almost complete
$m$-cluster tilting object in the generalized $m$-cluster category
${\mathcal {C}}_{\Pi}$. Assume that $Z$ is a basic cofibrant silting
object in per$\Pi$ such that $\pi (Z/{Z'})$ is isomorphic to $Y$,
where $\pi: {\rm per}\Pi \ra {\mathcal {C}}_{\Pi}$ is the canonical
projection and $Z'$ is an indecomposable direct summand of $Z$. Let
$A$ be the dg endomorphism algebra ${\rm Hom}^{\bullet}_{\Pi}(Z,Z)$
and $F$ the left derived functor $- \overset{L}{\otimes}_A Z$. From
the proof of Theorem \ref{25}, we know that $F$ is a Morita
equivalence from ${\mathcal {D}}(A)$ to ${\mathcal {D}}(\Pi)$ and
$A$ satisfies Assumptions \ref{23}. We denote the truncated dg
subalgebra ${\tau}_{\leq 0}A$ by $E$. Since $A$ has its homology
concentrated in nonpositive degrees, the canonical inclusion $E
\hookrightarrow A$ is a quasi-isomorphism. Then the left derived
functor $- \overset{L}{\otimes}_{E}A$ is a Morita equivalence from
${\mathcal {D}}(E)$ to ${\mathcal {D}}(A)$.

\begin{thm}[\cite{Ke96}] \label{33}
Let $l$ be a commutative ring. Let $B$ and $B'$ be two dg
$l$-algebras and $X$ a dg $B$-$B'$-bimodule which is cofibrant over
$B$. Assume that $B$ and $B'$ are flat as dg $l$-modules and $$-
\overset{L}{\otimes}_{B'} X: {\mathcal {D}}(B') \ra {\mathcal
{D}}(B)$$ is an equivalence. Then the dg algebras $B$ and $B'$ have
isomorphic cyclic homology and isomorphic Hochschild homology.
\end{thm}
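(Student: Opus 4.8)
This is Keller's theorem \cite{Ke96}; here is the strategy I would follow. The plan is to factor both invariants through a single finer one, the \emph{mixed complex}, and then to show that \emph{it} is a derived Morita invariant. Recall that to every small dg $l$-category $\mathcal{A}$ --- in particular to a dg $l$-algebra viewed as a dg category with one object --- one attaches functorially a mixed complex $C(\mathcal{A})$ over $l$, i.e.\ an $l$-module with a Hochschild-type differential $b$ and an extra operator $B$ satisfying $B^2 = 0$ and $bB + Bb = 0$. Its $b$-homology is the Hochschild homology $HH_\ast(\mathcal{A})$, and the cyclic homology $HC_\ast(\mathcal{A})$ is recovered from $C(\mathcal{A})$ by Connes' procedure. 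The hypothesis that $B$ and $B'$ are flat over $l$ is exactly what makes the naive cyclic bar complexes represent the homotopically correct mixed complexes, so that $C(B)$ and $C(B')$ are the right objects to compare; the same flatness is used for the dg categories of perfect modules below.

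First I would reduce the theorem to the single assertion that, for any dg $l$-algebra $B$ flat over $l$, the canonical map $C(B) \ra C(\per B)$ is a quasi-isomorphism of mixed complexes, where $\per B$ here denotes a fixed small dg enhancement of the perfect derived category --- say the full dg subcategory on a set of perfect cofibrant dg modules. Granting this, the theorem follows quickly. Since $X$ is cofibrant over $B$, the dg functor $- \otimes_{B'} X$ sends perfect cofibrant $B'$-modules to perfect cofibrant $B$-modules and computes the derived functor $- \overset{L}{\otimes}_{B'} X$; as the latter is an equivalence ${\mathcal {D}}(B') \ra {\mathcal {D}}(B)$ carrying the compact generator $B'$ to a compact generator of ${\mathcal {D}}(B)$, the dg functor restricts to a quasi-equivalence $\per B' \overset{\sim}{\ra} \per B$. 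By functoriality of $C(-)$ and its invariance under quasi-equivalences of dg categories, $C(\per B') \simeq C(\per B)$, hence $C(B') \simeq C(B)$; taking $b$-homology and then cyclic homology yields $HH_\ast(B') \cong HH_\ast(B)$ and $HC_\ast(B') \cong HC_\ast(B)$. (Alternatively one could build trace maps between the two Hochschild homologies and the two cyclic homologies directly from $X$ and a quasi-inverse bimodule and check they compose to the identity; the mixed-complex formulation simply bundles both invariants and both directions together.)

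The real work, and the step I expect to be the main obstacle, is the isomorphism $C(B) \simeq C(\per B)$ --- the agreement of cyclic homology with its ``perfect'' version. I would prove it by factoring the inclusion of the one-object dg category $B$ into $\per B$ first through the closure $\langle B \rangle$ under shifts, cones and finite direct sums, and then through the idempotent completion $\overline{\langle B \rangle} \simeq \per B$, showing that $C$ turns each step into a quasi-isomorphism. For $\langle B \rangle$ the content is an ``additivity''/cofinality principle: adjoining to a dg category an object that is an iterated cone on objects already present does not change $C$ up to quasi-isomorphism, such an object being redundant; one sees this by filtering the cyclic bar complex according to the number of occurrences of the newly adjoined object and checking that all resulting subquotients but one are acyclic. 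This filtration argument, carried out in \cite{Ke96}, is the technical heart. For the idempotent completion one uses the analogous, easier fact that a retract of an object already present contributes nothing new to $C$ up to quasi-isomorphism, together with a filtered colimit argument over the finitely many idempotents involved. Splicing the two steps gives $C(B) \simeq C(\per B)$ and completes the proof.
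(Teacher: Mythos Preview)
The paper does not supply its own proof of this statement: Theorem~\ref{33} is quoted from Keller~\cite{Ke96} and used as a black box, so there is nothing in the paper to compare your argument against. That said, your sketch is a faithful outline of Keller's actual proof in~\cite{Ke96}: he works with mixed complexes, shows that the mixed complex of a dg category is invariant under quasi-equivalence and under passage from $B$ to its category of perfect modules (via additivity for extensions and invariance under idempotent completion), and deduces derived Morita invariance of Hochschild and cyclic homology from this; the flatness hypotheses are used exactly as you say, to ensure the bar-type complexes compute the correct derived objects.
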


A corollary of Theorem \ref{33} is that $B'$ is strongly
($m+2$)-Calabi-Yau if and only if so is $B$.

The object $Z$ is canonically an $k$-module, so the dg algebras $A$
and $E$ are $k$-algebras. Thus, the derived equivalent dg algebras
$\Pi, A$ and $E$ are flat as dg $k$-modules. Following Remark
\ref{32} and Theorem \ref{33}, the dg algebras $A$ and $E$ are also
strongly ($m+2$)-Calabi-Yau.

We will show that the dg algebra $E$ satisfies the assumption in
Theorem \ref{30}, that is $E$ lies in $PCAlgc(l')$ for some finite
dimensional separable commutative $k$-algebra $l'$. In fact, $l' =
{\prod}_{|Z|} k$, where $|Z|$ is the number of indecomposable direct
summands of $Z$ in per$\Pi$. Furthermore, from the following lemma,
we can deduce that $l' = l$.

\begin{lem} \label{38}
Suppose that $B$ is a dg algebra with positive homologies being
zero. Then all basic cofibrant silting objects have the same number
of indecomposable direct summands in ${\rm per}B$.
\end{lem}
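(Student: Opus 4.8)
The plan is to show that the number of indecomposable summands of a basic cofibrant silting object in $\mathrm{per}B$ is an invariant of the triangulated category $\mathrm{per}B$, namely the rank of its Grothendieck group $K_0(\mathrm{per}B)$. Fix a basic cofibrant silting object $S = S_1 \oplus \cdots \oplus S_r$ with the $S_i$ indecomposable and pairwise non-isomorphic. The key point is that a silting object generates $\mathrm{per}B$ as a thick subcategory, and the vanishing $\mathrm{Hom}_{\mathcal{D}(B)}(S,\Sigma^i S) = 0$ for all $i>0$ means that $\mathrm{per}B$ carries a bounded $t$-structure (or at least a suitable filtration) whose heart has the $S_i$ as the indecomposable projectives; alternatively, one invokes that the classes $[S_1],\dots,[S_r]$ form a basis of $K_0(\mathrm{per}B)$. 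I would phrase this via the standard fact (going back to Keller--Vossieck and exploited by Aihara--Iyama \cite{AI}, \cite{BRT}) that for a silting object $S$, the category $\mathrm{per}B$ is equivalent to $\mathrm{per}$ of the dg endomorphism algebra $\Gamma_S = \mathrm{Hom}^{\bullet}_B(S,S)$, whose zeroth homology $H^0\Gamma_S$ is a finite-dimensional algebra because $B$ (hence $\Gamma_S$) has homology concentrated in nonpositive degrees.

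First I would record that $\mathrm{per}B$ is a Krull--Schmidt triangulated category with split idempotents (this is argued in the excerpt for the case of $A$ satisfying Assumptions \ref{23}, but only uses Hom-finiteness and idempotent-splitting, so I would re-derive or cite the relevant part), so that decompositions into indecomposables are essentially unique and $K_0$ is well-behaved. Next, for a basic cofibrant silting object $S$ with $r$ indecomposable summands, I would show $[S_1],\dots,[S_r]$ freely generate $K_0(\mathrm{per}B)$: they generate because $S$ generates $\mathrm{per}B$ as a thick subcategory (the triangulated subcategory generated by $S$ has Grothendieck group spanned by the $[S_i]$, and thick closure adds only summands, whose classes are already in the span up to sign); they are linearly independent because the Euler form $\chi(X,Y) = \sum_{i}(-1)^i \dim \mathrm{Hom}_{\mathcal{D}(B)}(X,\Sigma^i Y)$ restricted to $\mathrm{add}\,S$ is given, thanks to the silting vanishing $\mathrm{Hom}(S,\Sigma^i S)=0$ for $i>0$, by a matrix that is \emph{block upper-triangular with invertible diagonal} after ordering the $S_i$ appropriately — more precisely $\chi(S_i,S_j) = \dim\mathrm{Hom}(S_i,S_j) - (\text{terms in strictly positive cohomological degree of the bimodule, which vanish})$, so the Gram matrix is the Cartan-type matrix of the finite-dimensional algebra $H^0\Gamma_S$, which is invertible over $\mathbb{Q}$. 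Hence $r = \mathrm{rk}\,K_0(\mathrm{per}B)$, independent of the chosen silting object.

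The main obstacle is the linear independence step: I need the Euler form to be well-defined and non-degenerate on the span of the silting summands. Hom-finiteness of $\mathrm{per}B$ (and hence finiteness of each $\mathrm{Hom}_{\mathcal{D}(B)}(X,\Sigma^iY)$ and their vanishing for $|i|\gg 0$ when $X,Y$ range over a bounded set) is not among the hypotheses of Lemma \ref{38} — only ``positive homologies vanish'' is assumed — so I would first reduce to the endomorphism dg algebra $\Gamma_S$, note $H^{>0}\Gamma_S = 0$ and $H^0\Gamma_S$ finite-dimensional, and work with the perfect derived category of $\Gamma_S$ where the Cartan matrix of $H^0\Gamma_S$ is a genuine invertible integer matrix (it is the identity plus a nilpotent part, being the Cartan matrix of a finite-dimensional algebra whose indecomposable projectives are the $S_i$). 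Comparing this for two silting objects $S$ and $S'$ via the triangle equivalences $\mathrm{per}B \simeq \mathrm{per}\Gamma_S$ and $\mathrm{per}B \simeq \mathrm{per}\Gamma_{S'}$ then forces $\mathrm{rk}\,K_0(\mathrm{per}\Gamma_S) = \mathrm{rk}\,K_0(\mathrm{per}\Gamma_{S'})$, i.e. the two silting objects have the same number of indecomposable summands. An alternative, perhaps cleaner, route avoiding $K_0$ entirely is to invoke that any two silting objects are connected by iterated mutations (Aihara--Iyama \cite{AI}), and silting mutation replaces exactly one indecomposable summand by one indecomposable summand, hence preserves the count; I would mention this as the short argument and keep the $K_0$ computation as the self-contained one.
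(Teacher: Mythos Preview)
Your overall strategy---show that the number of indecomposable summands of a basic silting object equals the rank of $K_0(\mathrm{per}B)$, using the derived Morita equivalence $\mathrm{per}B\simeq\mathrm{per}\Gamma_S$---is exactly the paper's. The paper, however, does not attempt to prove linear independence of the classes $[S_i]$ via an Euler form. Instead it invokes Bondarko's result on weight structures (Proposition~5.3.3 of \cite{Bon}): since $\mathrm{add}S$ is an idempotent-complete additive subcategory of $\mathrm{per}B$ with $\mathrm{Hom}(\mathrm{add}S,\Sigma^{>0}\mathrm{add}S)=0$ and generating $\mathrm{per}B$ as a thick subcategory, the inclusion induces an isomorphism $K_0(\mathrm{add}S)\xrightarrow{\sim}K_0(\mathrm{per}B)$. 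Applying this once with $S=B$ and once with $S=Z$ an arbitrary basic silting object gives $K_0(\mathrm{add}B)\simeq K_0(\mathrm{add}Z)$, hence the same number of indecomposables.

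Your Euler-form implementation has a genuine gap. First, the assertion that $H^0\Gamma_S$ is finite-dimensional does not follow from ``$H^{>0}B=0$'' alone; Hom-finiteness of $\mathrm{per}B$ is not among the hypotheses of the lemma. Second, and more seriously, even when $H^0\Gamma_S$ is finite-dimensional its Cartan matrix is \emph{not} in general ``identity plus nilpotent'' and need not be invertible over $\mathbb{Q}$: e.g.\ for $B=kC_2/\mathrm{rad}^2$ (the cyclic Nakayama algebra with two vertices and Loewy length~$2$), viewed as a dg algebra in degree~$0$, the Cartan matrix is $\bigl(\begin{smallmatrix}1&1\\1&1\end{smallmatrix}\bigr)$, which is singular. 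So the Gram-matrix argument cannot establish linear independence of the $[S_i]$ in general. Bondarko's result sidesteps this entirely: it identifies $K_0(\mathrm{per}B)$ with the \emph{split} Grothendieck group of $\mathrm{add}S$, which is free of rank $r$ on the indecomposables regardless of any Cartan-type pairing. Finally, your alternative route via silting mutation is also not available here: transitivity of silting mutation (``silting-connectedness'') is studied in \cite{AI} but is not established for arbitrary $\mathrm{per}B$, so it cannot serve as the short argument.
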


\begin{proof}
The triangulated category per$B$ contains an additive subcategory
${\mathcal {B}} :=$ add$B$. Since the dg algebra $B$ has its
homology concentrated in nonpositive degrees, it follows that $${\rm
Hom}_{{\rm per}B} ({\mathcal {B}},{\Sigma}^p{\mathcal {B}}) = 0,
\quad \, p > 0.$$ Since the category per$B$, which consists of the
compact objects in ${\mathcal {D}}(B)$, and the category add$B$ are
both idempotent split, by Proposition 5.3.3 of \cite{Bon}, the
isomorphism
$$K_0 ({\rm per}B) \simeq K_0 ({\rm add}B)$$ holds, where $K_0 (-)$
denotes the Grothendieck group.

Let $Z$ be any basic cofibrant silting object in per$B$ and $B'$ its
dg endomorphism algebra ${\rm Hom}^{\bullet}_B(Z,Z)$. Then $B'$ has
its homology concentrated in nonpositive degrees and per$B'$ is
triangle equivalent to per$B$. Therefore, we have $K_0 ({\rm per}B')
\simeq K_0 ({\rm add}B')$ and $K_0 ({\rm per}B') \simeq K_0 ({\rm
per}B)$. As a consequence, the following isomorphisms hold
$$K_0 ({\rm add}B) \simeq K_0 ({\rm add}B') \simeq K_0 ({\rm
add}Z).$$ Thus, any basic cofibrant silting object in ${\rm per}B$
has the same number of indecomposable direct summands as that of the
dg algebra $B$ itself.
\end{proof}

When forgetting the grading, the dg algebra $E$ becomes to be $E^u
:= Z^0 A \oplus ({\prod}_{r < 0} A^r)$, where $Z^0 A ( = {\rm
Hom}_{{\mathcal {C}}(\Pi)}(Z,Z))$ consists of the zeroth cycles of
$A$. For any $x \in {\prod}_{r < 0}A^r$, the element $1+x$ clearly
has an inverse element. It follows that ${\prod}_{r<0}A^r$ is
contained in rad($E^u$). We have the following canonical short exact
sequence
$$0 \ra B^0A \ra Z^0A \stackrel{p}\ra H^0A \ra 0,$$where $B^0A$ is a
two-sided ideal of the algebra $Z^0A$ consisting of the zeroth
boundaries of $A$.

Following from Lemma \ref{34}, without loss of generality, we can
assume that the basic silting object $Z$ is a minimal perfect dg
$\Pi$-module.

\begin{lem}
Keep the above notation and suppose that $Z$ is a minimal perfect dg
$\Pi$-module. Then $B^0A$ lies in the radical of $Z^0A$.
\end{lem}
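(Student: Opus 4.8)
The plan is to show that the two-sided ideal $B^0A$ of the algebra $Z^0A$ is contained in $\mathrm{rad}(Z^0A)$ by exploiting the minimal-perfect structure of $Z$. Recall that $A = \mathrm{Hom}^{\bullet}_{\Pi}(Z,Z)$ and that $Z$, being minimal perfect, has underlying graded module $\bigoplus_{j=1}^N R_j$ with differential $d_{int} + \delta$, where $\delta$ has all entries in the ideal $\mathfrak{m}$ of $\Pi$ generated by the arrows. First I would compute $B^0A$ explicitly: a zeroth boundary in $A$ is of the form $d_A(\psi) = d_Z \circ \psi \pm \psi \circ d_Z$ for some $\psi \in A^{-1}$, and since $d_Z = d_{int} + \delta$ with $\delta \in \mathfrak{m}$, while $d_{int}$ acts "internally" on each $R_j$, the point is that modulo the radical the contribution of $d_{int}$ must be controlled. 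The key observation is that $Z^0A$ maps onto $H^0A$ via $p$, and $H^0A$ is (Morita-equivalent to) the finite-dimensional basic algebra $H^0\Pi$; I want to show $B^0A \subseteq \ker(\text{any simple representation of } Z^0A)$, equivalently that $B^0A$ acts as zero on every simple $Z^0A$-module.

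The main step is the following. A simple module over $Z^0A$ factors through $Z^0A / \mathrm{rad}(Z^0A)$, and since $\prod_{r<0}A^r \subseteq \mathrm{rad}(E^u)$ and $E^u = Z^0A \oplus \prod_{r<0}A^r$, the semisimple quotient of $Z^0A$ is the same as that of $E^u$, which is the same as that of $A^u$ up to the nilpotent part — but more directly, $Z^0A/B^0A \cong H^0A = H^0\Pi$, whose semisimple quotient is $l = \prod_{i\in Q_0} ke_i$ because $\Pi = (k\widetilde{Q}^V, d)$ and all arrows of $\widetilde{Q}^V$ sit in strictly negative degree or generate the radical at degree zero (here the hypothesis that there are no loops at the relevant vertex, inherited from Theorem \ref{16}, guarantees there are no degree-zero loops, so $\mathrm{rad}\,\mathrm{End}_{\mathcal{D}(\Pi)}(P_i)$ consists of cyclic paths of length $\geq 2$). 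So it suffices to check that the composite $B^0A \hookrightarrow Z^0A \twoheadrightarrow Z^0A/\mathrm{rad}(Z^0A)$ is zero; since this quotient is semisimple and $B^0A$ is an ideal, it is enough to see that $B^0A$ is a nilpotent ideal, or at least that $B^0A \subseteq \mathrm{rad}(Z^0A)$ which would follow once we know $Z^0A/B^0A$ has the same semisimple quotient as $Z^0A$.

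I would finish as follows: from the short exact sequence $0 \to B^0A \to Z^0A \xrightarrow{p} H^0A \to 0$ and the fact that a surjection of rings induces a surjection on semisimple quotients, it suffices to prove that $p$ induces an \emph{isomorphism} $Z^0A/\mathrm{rad}(Z^0A) \xrightarrow{\sim} H^0A/\mathrm{rad}(H^0A)$, equivalently that $B^0A \subseteq \mathrm{rad}(Z^0A)$ — which is precisely the claim, so this needs an independent argument. The independent input is the minimal-perfect hypothesis: a degree-$(-1)$ endomorphism $\psi$ of $Z$ is, entrywise, either an element of $\mathfrak{m}$ (off-diagonal blocks, by minimality) or, on a diagonal block $R_j \to R_j$, a map of shifted summands of $\Pi$ of internal degree $-1$, hence again given by right multiplication by an element of $\Pi^{-1} \subseteq \mathfrak{m}$ since $\Pi$ has no elements of negative degree outside $\mathfrak{m}$. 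Therefore $d_A(\psi) = d_{int}\psi \pm \psi d_{int} + \delta\psi \pm \psi\delta$ has every entry in $\mathfrak{m} \cdot (\text{paths}) + (\text{paths})\cdot\mathfrak{m}$, i.e.\ every entry of a zeroth boundary is a combination of paths of length $\geq 1$; composing two such, or multiplying into $\mathrm{End}(Z)$, keeps us in the ideal generated by arrows, and since $H^0\Pi$ is finite-dimensional this ideal is nilpotent modulo $B^0A$. Hence $B^0A$ is contained in the nil radical of $Z^0A$, so $B^0A \subseteq \mathrm{rad}(Z^0A)$.

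\textbf{Main obstacle.} The delicate point is the bookkeeping in the step identifying the entries of a zeroth boundary $d_A(\psi)$ with elements lying in the ideal generated by the arrows of $\widetilde{Q}^V$: one must carefully separate the $d_{int}$-part (which by definition of minimal perfect is \emph{not} automatically in $\mathfrak{m}$ when viewed as a map of graded $\Pi$-modules) from the $\delta$-part, and use that a degree-$(-1)$ morphism between (shifts of) direct summands of $\Pi$ is necessarily given by a negative-degree element of $\Pi$, all of which lie in $\mathfrak{m}$ since $Q$ (and hence $\widetilde{Q}^V$) has arrows only in degrees $\leq 0$ with the degree-zero part semisimple. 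Once this is pinned down, nilpotence modulo $B^0A$ and the inclusion into the radical are formal.
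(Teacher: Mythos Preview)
Your argument has a genuine gap at the final step. You correctly arrive at the key fact that every element $f\in B^0A$ has matrix entries lying in the ideal $\mathfrak{m}$ generated by the arrows of $\widetilde{Q}^V$ (though your justification is slightly off: minimality constrains the entries of $\delta$, not of an arbitrary $\psi\in A^{-1}$; the correct reason is that $d_{int}\psi \pm \psi d_{int}$ has entries given by $d_\Pi$ applied to the entries of $\psi$, and $d_\Pi(\Pi)\subseteq\mathfrak{m}$, while $\delta\psi$ and $\psi\delta$ inherit $\mathfrak{m}$ from $\delta$). The problem is the inference from ``entries in $\mathfrak{m}$'' to ``$f$ is nilpotent''. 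This inference is false: take $\Pi$ with two vertices and degree-zero arrows $a:1\to 2$, $b:2\to 1$ with $d(a)=d(b)=0$, and a degree $-1$ arrow $c$ with $d(c)=ab$. Then for $Z=\Pi$ one has $ab\in B^0A$, yet $(ab)^n\neq 0$ for every $n$ in the completed path algebra, so $ab$ is not nilpotent. Your phrase ``this ideal is nilpotent modulo $B^0A$'' only says that some power lands back in $B^0A$, which is vacuous for elements already in $B^0A$ and does not yield nilpotence in $Z^0A$.

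The paper's proof uses the single idea you are missing: $\Pi$ is \emph{complete} with respect to $\mathfrak{m}$ (pseudo-compact). Once you know the entries of $f$ lie in $\mathfrak{m}$, then for any $g\in Z^0A$ the geometric series $1_Z + gf + (gf)^2 + \cdots$ converges entrywise and provides an inverse to $1_Z - gf$; similarly for $1_Z - fg$. This is precisely the characterization of the Jacobson radical (quasi-regularity), so $f\in\mathrm{rad}(Z^0A)$. No nilpotence is claimed or needed. Your lengthy detour through simple modules and the surjection $\bar p$ is, as you note yourself, circular; it can be discarded entirely once you invoke completeness.
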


\begin{proof}
Let $f$ be an element in $B^0A$. Then $f$ is of the form $d_Z h + h
d_Z$ for some degree $-1$ morphism $h: Z \ra Z$. Since $Z$ is
minimal perfect, the entries of $f$ lie in the ideal $\mathfrak{m}$
generated by the arrows of $\widetilde{Q}^V$. Then for any morphism
$g: Z \ra Z$, the morphism $1_Z - gf$ admits an inverse $1_Z + gf +
(gf)^2 + \ldots$. Similarly for the morphism $1_Z - fg$. It follows
that $f$ lies in the radical of the algebra $Z^0A$. This completes
the proof.
\end{proof}

The epimorphism $p$ in the above short exact sequence induces an
epimorphism $$\overline{p}: Z^0A/{{\rm rad}(Z^0A)} \ra H^0A/{{\rm
rad}(H^0A)}.$$ Since $B^0A$ lies in the radical of $Z^0A$, the
epimorphism $\overline{p}$ is an isomorphism. Therefore, the
following isomorphisms $$E^u/{{\rm rad}(E^u)} \simeq Z^0A/{{\rm
rad}(Z^0A)} \simeq H^0A/{{\rm rad}(H^0A)}$$ are true. Note that
per$\Pi$ is Krull-Schmidt and Hom-finite. Since the algebra $E_i :=
{\rm End}_{{\rm per}\Pi}(Z_i)$ is local and $k$ is algebraically
closed, the quotient $E_i/{{\rm rad}(E_i)}$ is isomorphic to $k$.
Then we have that
$$H^0A/{{\rm rad}(H^0A)} \simeq {\rm End}_{\mbox{per}\Pi}(Z)/{{\rm rad}({\rm End}_{\mbox{per}\Pi}(Z))}
 \simeq {\prod}_{|Z|} E_i/{{\rm rad}(E_i)} \simeq {\prod}_{|Z|}k \, ( = l).$$
Hence, the dg algebra $E$ lies in $PCAlgc(l)$. Therefore, $E$ is
quasi-isomorphic to some good completed deformed preprojective dg
algebra $\widehat{\Pi}(Q',m+2,W')$ (denoted by $\Pi'$). Moreover,
$H^0 \Pi'$ is equal to $H^0A$ which is finite dimensional.

The following diagram

\[
\xymatrix{ {\rm per}\Pi' \ar[r]^{- \overset{L}\otimes_{\Pi'}E}
\ar[d] & {\rm per}E \ar[r]^{- \overset{L}\otimes_{E}A} \ar[d] & {\rm
per}A \ar[r]^{- \overset{L}\otimes_{A}Z} \ar[d] & {\rm per}\Pi
\ar[d] \\
{\mathcal {C}}_{\Pi'} \ar[r] & {\mathcal {C}}_E \ar[r]& {\mathcal
{C}}_A \ar[r] & {\mathcal {C}}_{\Pi}}
\]
is commutative, where each functor in the rows is an equivalence and
each functor in a column is the canonical projection. The preimage
of $Z$ in ${\rm per}\Pi'$, under the equivalence $F$ given by the
composition of the functors in the top row, is $\Pi'$. Let $\Pi'_0 =
e_j \Pi'$ be the $P$-indecomposable dg $\Pi'$-module such that
$F(\Pi'_0) = Z'$ in per$\Pi$, where $j$ is a vertex of $Q'$. Assume
that there are no loops of $Q'$ at vertex $j$. It follows from
Theorem \ref{16} that the almost complete $m$-cluster tilting
$P$-object $\Pi'/{\Pi'_0}$ has exactly $m+1$ complements in
${\mathcal{C}}_{\Pi'}$. Note that the image of $\Pi'/{\Pi'_0}$ in
${\mathcal {C}}_{\Pi}$, under the equivalence given by the
composition of the functors in the bottom row, is $Y$. Therefore,
the liftable almost complete $m$-cluster tilting object $Y$ has
exactly $m+1$ complements in ${\mathcal{C}}_{\Pi}$.

As a conclusion, we write down the following theorem.

\begin{thm} \label{35}
Let $\Pi$ be a good completed deformed preprojective dg algebra
$\widehat{\Pi}(Q,m+2,W)$ whose zeroth homology $H^0 \Pi$ is finite
dimensional. Let $Z$ be a basic silting object in per$\Pi$ which is
minimal perfect and cofibrant. Denote by $E$ the dg algebra
$\tau_{\leq 0}({\rm Hom}^{\bullet}_{\Pi}(Z,Z))$. Then
\begin{itemize}
\item[1)] $E$ is quasi-isomorphic to some good completed deformed preprojective
dg algebra $\Pi' = {\widehat{\Pi}}(Q',m+2,W')$, where the quiver
$Q'$ has the same number of vertices as $Q$ and $H^0 \Pi'$ is finite
dimensional;

\item[2)] let $Y$ be a liftable almost complete $m$-cluster
tilting object of the form $\pi(Z/Z')$ in ${\mathcal{C}}_{\Pi}$ for
some indecomposable direct summand $Z'$ of $Z$. If we further assume
that there are no loops at the vertex $j$ of $Q'$, where $e_j \Pi'
\overset{L} \otimes_{\Pi'} Z = Z'$, then $Y$ has exactly $m+1$
complements in ${\mathcal{C}}_{\Pi}$.
\end{itemize}
\end{thm}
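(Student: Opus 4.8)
The plan is to collect the facts established earlier in this section and transport them through the chain of derived equivalences. For part 1), I would first recall, exactly as in the proof of Theorem \ref{25}, that $A = {\rm Hom}^{\bullet}_{\Pi}(Z,Z)$ satisfies Assumptions \ref{23} and that $F = - \overset{L}{\otimes}_A Z : {\mathcal {D}}(A) \to {\mathcal {D}}(\Pi)$ is a Morita equivalence. Since $A$ has its homology concentrated in nonpositive degrees, the inclusion $E = {\tau}_{\leq 0}A \hookrightarrow A$ is a quasi-isomorphism, so $- \overset{L}{\otimes}_E A$ is a Morita equivalence as well, and $\Pi$, $A$, $E$ are all flat over $k$. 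By Remark \ref{32} the dg algebra $\Pi$ is strongly $(m+2)$-Calabi-Yau, hence two applications of Theorem \ref{33} (first through $A$, then through $E$) show that $E$ is strongly $(m+2)$-Calabi-Yau, with components concentrated in degrees $\leq 0$.

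The heart of part 1) is the identification of the separable base algebra. Writing $E^u = Z^0A \oplus {\prod}_{r<0}A^r$ with ${\prod}_{r<0}A^r \subseteq {\rm rad}(E^u)$, and combining the short exact sequence $0 \to B^0A \to Z^0A \to H^0A \to 0$ with the inclusion $B^0A \subseteq {\rm rad}(Z^0A)$ that comes from the minimality of $Z$, I obtain ${E^u}/{{\rm rad}(E^u)} \simeq {H^0A}/{{\rm rad}(H^0A)}$. Since ${\rm per}\Pi$ is Krull-Schmidt and Hom-finite and $k$ is algebraically closed, each local ring $E_i := {\rm End}_{{\rm per}\Pi}(Z_i)$ has residue field $k$, so this quotient is ${\prod}_{|Z|}k =: l$; Lemma \ref{38} then identifies $|Z|$ with the number of indecomposable direct summands of $\Pi$, that is, with the number of vertices of $Q$. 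Hence $E$ lies in $PCAlgc(l)$, and Theorem \ref{30} yields a quasi-isomorphism $E \to \Pi' = \widehat{\Pi}(Q',m+2,W')$ with $\Pi'$ good and $Q'$ having the same number of vertices as $Q$; finally $H^0\Pi' \simeq H^0 E = H^0A \simeq {\rm End}_{{\rm per}\Pi}(Z)$ is finite dimensional by Proposition \ref{4}, which proves 1).

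For part 2), I would use the commutative diagram of triangle equivalences relating ${\rm per}\Pi'$, ${\rm per}E$, ${\rm per}A$, ${\rm per}\Pi$ and the corresponding generalized $m$-cluster categories. The composite of the top row is the equivalence $F$, which sends $\Pi'$ to $Z$ and the $P$-indecomposable $\Pi'_0 = e_j\Pi'$ (with $j$ the vertex of $Q'$ determined by the primitive idempotent corresponding to $Z'$) to $Z'$, hence sends $\pi(\Pi'/\Pi'_0)$ to $Y$ along the bottom row. Under the hypothesis that $Q'$ has no loop at $j$ and since $H^0\Pi'$ is finite dimensional, Theorem \ref{16} applies to $\Pi'$ and shows that $\pi(\Pi'/\Pi'_0)$ has exactly $m+1$ complements in ${\mathcal {C}}_{\Pi'}$; transporting these through the equivalence ${\mathcal {C}}_{\Pi'} \simeq {\mathcal {C}}_{\Pi}$, which preserves $m$-cluster tilting objects and their indecomposable direct summands, gives exactly $m+1$ complements for $Y$ in ${\mathcal {C}}_{\Pi}$. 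The step I expect to be the main obstacle is the radical bookkeeping that pins down $l = {\prod}_{|Z|}k$ precisely, together with the Grothendieck-group comparison behind Lemma \ref{38} that forces the vertex counts of $Q$ and $Q'$ to agree; the remainder is just a matter of chaining the equivalences and invoking Theorems \ref{16}, \ref{30} and \ref{33}.
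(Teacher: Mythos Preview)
Your proposal is correct and follows essentially the same route as the paper: the argument in Section~7 establishes exactly the chain you describe---using Theorem~\ref{33} to transfer strong Calabi-Yau from $\Pi$ to $A$ to $E$, the radical analysis (including Lemma~7.3 on $B^0A \subseteq {\rm rad}(Z^0A)$ from minimality of $Z$) to place $E$ in $PCAlgc(l)$ with $l = {\prod}_{|Z|}k$, Lemma~\ref{38} to match $|Z|$ with $|Q_0|$, Theorem~\ref{30} to obtain $\Pi'$, and finally the commutative square of equivalences together with Theorem~\ref{16} for part~2). The steps you single out as the main obstacle are precisely the ones the paper isolates as separate lemmas.
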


Here we would like to point out a special case of the above theorem,
namely $m=1$ and $Z = LA_1^{(k)}$ with respect to some vertex $k$ of
$Q$. Let $(Q^{\star}, W^{\star})$ denote the (reduced) mutation
${\mu_k(Q,W)}$ defined in \cite{DWZ} of the quiver with potential
$(Q,W)$ at vertex $k$. Let $A$ be the dg endomorphism algebra ${\rm
Hom}_{\Pi}^{\bullet}(Z,Z)$ and $\Pi^{\star}$ the good completed
deformed preprojective dg algebra
${\widehat{\Pi}}(Q^{\star},m+2,W^{\star})$. By \cite{KY09}, there is
a canonical morphism from $\Pi^{\ast}$ to $A$. Define three functors
as follows:
$$F = - \overset{L} \otimes_{\Pi^{\star}} Z, \quad \,  F_1 = - \overset{L} \otimes_{\Pi^{\star}} A, \quad \, F_2 = - \overset{L} \otimes_{A} Z.$$
Clearly, we have that $F = F_2 F_1$ and $F_2$ is a quasi-inverse equivalence. It was shown in \cite{KY09} that $F$ is a quasi-inverse equivalence. The
following isomorphisms
$$H^{n}(\Pi^{\star}) \simeq {\rm Hom}_{\Pi^{\star}} (\Pi^{\star}, \Sigma^n \Pi^{\star}) \simeq {\rm Hom}_A(A, \Sigma^n A) \simeq H^{n}A$$
become true, which implies that $\Pi^{\star}$ and $A$ are quasi-isomorphic. Therefore, the quiver with potential $(Q',W')$ appearing in Theorem \ref{35} 1)
for this special case can be chosen as $\mu_k (Q,W)$.

As the end part of this section, we state a `reasonable' conjecture
about the non-loop assumption in the above theorem for completed
deformed preprojective dg algebras.

\begin{defn}
Let $r$ be a positive integer. An algebra $A \in PCAlgc(l)$ is said
to be {\em $r$-rigid} if
$$HH_0 (A) \simeq l, \quad {\rm {and}} \quad HH_p (A)= 0 \,\, (1
\leq p \leq r-1),$$where $HH_{\ast}(A)$ is the pseudo-compact
version of the Hochschild homology of the dg algebra $A$.
\end{defn}

\begin{rem} For completed Ginzburg algebras associated to quivers with potentials, our definition of $1$-rigidity coincides
with the definition of rigidity in \cite{DWZ}. Proposition 8.1 in
\cite{DWZ} states that any rigid reduced quiver with potential is
$2$-acyclic. Then no loops will be produced following their
mutation rule. Although 
we do not know whether the quiver $Q'$ related to such a silting
object as in Theorem \ref{35} can be obtained from mutation of
quivers with potentials, we can still obtain that the quiver $Q'$
always does not contain loops in the condition of $1$-rigidity (see
Corollary \ref{41}).
\end{rem}

\begin{prop}\label{42}
The completed deformed preprojective dg algebras $\Pi =
{\widehat{\Pi}(Q,m+2,0)}$ associated to acyclic quivers $Q$ are
$m$-rigid.
\end{prop}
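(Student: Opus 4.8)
The plan is to compute the pseudo-compact Hochschild homology of $\Pi=\widehat{\Pi}(Q,m+2,0)$ directly from its canonical bimodule resolution, invoking acyclicity of $Q$ only at the very end. First I would observe that for an acyclic $Q$ (all arrows in degree $0$) goodness already forces $W=0$: for $m\ge 2$ there is no path of ${\overline{Q}}^V$ of degree $1-m$ at all, and for $m=1$ the only candidates would be oriented cycles of $Q$ of length $\ge 3$, which do not exist. Thus $\Pi$ is the completed path algebra $\widehat{k{\widetilde{Q}}^V}$, where ${\widetilde{Q}}^V$ adjoins to $Q$ a dual arrow $a^{\ast}$ of degree $-m$ for every $a\in Q_1$ and a loop $t_i$ of degree $-m-1$ at every vertex, with $da=da^{\ast}=0$ and $dt_i=e_i(\sum_a[a,a^{\ast}])e_i$. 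In particular $\Pi$ is topologically homologically smooth and, as a graded $l$-bimodule, $\Pi$ is concentrated in the homological degrees of the form $am+b(m+1)$ with $a,b\ge 0$; this set omits $1,2,\dots,m-1$.

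Next I would feed the standard self-dual bimodule resolution $\mathbf{P}\ra\Pi$ of a Ginzburg/deformed-preprojective dg algebra into the Hochschild complex. Since $kQ$ has global dimension $\le 1$, this resolution has only four nonzero blocks $\Pi\otimes_l U_j\otimes_l\Pi$: from the vertices ($U_0=l$, at homological level $0$), the arrows ($U_1=kQ_1$, at level $1$), the dual arrows ($U_2\cong D(kQ_1)$, at level $m+1$) and the loops ($U_3\cong l$, at level $m+2$). Applying $\Pi\overset{L}{\otimes}_{\Pi^e}(-)=\Pi\otimes_{\Pi^e}\mathbf{P}$ identifies $HH_{\ast}(\Pi)$ with the homology of a four-block complex whose blocks are the cyclic quotients $U_j\otimes_{l^e}\Pi$; combining the level of each block with the degree concentration of $\Pi$, these blocks live, in total degree, in $\{0\}\cup[m,\infty)$, $\{1\}\cup[m+1,\infty)$, $\{m+1\}\cup[2m+1,\infty)$ and $\{m+2\}\cup[2m+2,\infty)$ respectively. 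Hence the complex vanishes in total degrees $2,\dots,m-1$, so $HH_p(\Pi)=0$ there for free, and in total degrees $0$ and $1$ only the first two blocks can contribute.

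The remaining computation in total degrees $0$ and $1$ is exactly where acyclicity enters. In internal degree $0$ each block is computed inside $kQ$: the vertex block gives $kQ\otimes_{l^e}l=\bigoplus_i e_i(kQ)e_i$ modulo rotations, which equals $l$ because $Q$ has no oriented cycles; and the arrow block gives $kQ_1\otimes_{l^e}kQ\cong\bigoplus_{a\in Q_1}e_{s(a)}(kQ)e_{t(a)}$, which vanishes because a path from $t(a)$ to $s(a)$ would close up with $a$ into an oriented cycle. So the degree-$0$ chain group is $l$, and for $m\ge 2$ the degree-$1$ chain group is $0$; for $m=1$ it is $\overline{\Pi}$ in homological degree $1$ but the internal differential of $\Pi$ vanishes there (as $da=da^{\ast}=0$), so the differential into degree $0$ is zero. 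In every case $HH_0(\Pi)\simeq l$, and together with the free vanishing in degrees $2,\dots,m-1$ this proves $HH_p(\Pi)=0$ for $1\le p\le m-1$; hence $\Pi$ is $m$-rigid.

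The main obstacle I anticipate is the bookkeeping for the bimodule resolution: pinning down precisely which block of $\Pi\otimes_{\Pi^e}\mathbf{P}$ lands in which total degree (the level shifts involve the degrees $-m$ and $-m-1$ of the dual arrows and loops), and checking that the connecting maps between the four blocks — not merely the internal differentials — really do not disturb the degree-$0$ and degree-$1$ computations. As a cross-check, the same argument runs cohomologically: $\Pi$ is $(m+2)$-Calabi-Yau (Remark \ref{32}), so $HH_p(\Pi)\simeq HH^{m+2-p}(\Pi)$, and in the dual four-block complex there is no contribution in cohomological degrees $3,\dots,m$, the degree $m+1$ piece is $\bigoplus_a e_{s(a)}(kQ)e_{t(a)}=0$, and the degree $m+2$ piece is $\bigoplus_i e_i(kQ)e_i=l$, which again gives $HH^q(\Pi)=0$ for $3\le q\le m+1$ and $HH^{m+2}(\Pi)\simeq l$.
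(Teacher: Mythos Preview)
Your argument is correct, but it takes a genuinely different route from the paper's proof. The paper works with the bar complex: since $\Pi^{-p}=0$ for $1\le p\le m-1$, the total-degree $p$ part of the Hochschild bicomplex for $0\le p\le m-1$ coincides with the ordinary Hochschild complex of $B=kQ=\Pi^0$, and then the paper simply cites the known vanishing $HH_p(kQ)=0$ for $p>0$ (finite-dimensional hereditary algebra over an algebraically closed field). You instead use the short bimodule resolution coming from the tensor-algebra structure of $\Pi$, split it into four blocks by the degrees $0,-m,-m-1$ of the arrows of $\widetilde{Q}^V$, and do an explicit degree count; acyclicity then enters in computing the degree-$0$ and degree-$1$ pieces by hand. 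Both arguments rest on the same grading gap in $\Pi$, but the paper's is shorter because it outsources the $kQ$ computation, while yours is self-contained and makes the role of acyclicity more transparent (e.g.\ the vanishing of $\bigoplus_a e_{s(a)}(kQ)e_{t(a)}$). One small remark: your invocation of ``$kQ$ has global dimension $\le 1$'' is not really what produces the four blocks --- they arise simply from decomposing $k\widetilde{Q}^V_1$ by degree in the two-term resolution of the completed path algebra --- though if you have in mind the length-$(m+2)$ self-dual Calabi--Yau resolution, then hereditarity is indeed what kills the middle terms; either reading leads to the same complex.
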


\begin{proof}
It is clear that the zeroth component ${\Pi}^0$ of $\Pi$ is just the
finite dimensional path algebra $kQ$ (denoted by $B$) and the
$(-p)$th component of $\Pi$ is zero for $1 \leq p \leq m-1$. Thus,
the Hochschild homology of $\Pi$ is given by
\begin{center}
$HH_0(\Pi) = B/{[B, B]} = {\prod}_{|Q_0|}k,$ \\
\smallskip
$HH_p(\Pi) = HH_p(B) = ker({\partial}^0_p)/
\mbox{Im}({\partial}^0_{p+1}) \quad (1 \leq p \leq m-1),$
\end{center}
where ${\partial}^0_p: B^{{\otimes}_k (p+1)} \ra B^{{\otimes}_k p}$
is the $p$th row differential of the uppermost row in the Hochschild
complex $X := \Pi \overset{L}{\otimes}_{{\Pi}^e} \Pi$.

Since the path algebra $kQ$ is of finite dimension and of finite
global dimension and $k$ is algebraically closed, we have $HH_p(B) =
0$ for all integers $p > 0$, cf. Proposition 2.5 of \cite{Ke96}. It
follows that the dg algebra ${\widehat{\Pi}(Q,m+2,0)}$ is $m$-rigid.
\end{proof}

\begin{prop}\label{40}
Let $\Pi = {\widehat{\Pi}(Q,m+2,W)}$ be a good completed deformed
preprojective dg algebra and $p$ a fixed integer in the segment $[0,
m]$. Suppose the $p$-th Hochschild homology of $\Pi$ satisfies the
isomorphism
\begin{displaymath}
 HH_p (\Pi) \simeq \left\{ \begin{array}{ll}
       {\prod}_{|Q_0|} k & \textrm{if}\,\, p=0,\\
       0 & \textrm{if} \,\, p \neq 0.
\end{array} \right.
\end{displaymath}
Then ${\overline{Q}}^V$ does not contain loops with zero
differential and of degree $-p$.
\end{prop}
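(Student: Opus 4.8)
The plan is to argue by contraposition: assuming $\overline{Q}^V$ contains a loop $a$ at a vertex $i$ with $|a| = -p$ and $da = 0$, I will produce a nonzero class in $HH_p(\Pi)$ when $p \neq 0$, respectively $|Q_0| + 1$ linearly independent classes in $HH_0(\Pi)$ when $p = 0$, contradicting the stated isomorphism.

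The computational tool is the pseudo-compact Hochschild complex $C_\bullet(\Pi)$ of the completed path algebra $\Pi = (\widehat{k\widetilde{Q}^V}, d)$, as set up in Section 8 and Appendix B of \cite{VDB}: here $C_0(\Pi) = \Pi/\overline{[l,\Pi]}$, while $C_n(\Pi) = \Pi \widehat{\otimes}_{l^e} (\mathfrak{m})^{\widehat{\otimes}_l n}$ for $n \geq 1$ (with $\mathfrak{m}$ the closed ideal generated by the arrows), equipped with the Hochschild differential $b$ and with the internal differential induced by $d$; the Hochschild homology $HH_j(\Pi)$ is the homology of the total complex, in which a chain in simplicial degree $n$ and internal (cohomological) degree $-r$ contributes to total homological degree $n+r$.

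Next I would exhibit the cycle. Let $\bar a \in C_0(\Pi)$ be the class of the loop $a$; it is nonzero since $a$, being a loop, is a nontrivial cyclic path, and it sits in internal degree $-p$, hence in total homological degree $p$. It is a cycle, because $b$ vanishes on $C_0$ and $d\bar a = \overline{da} = 0$. To see that $[\bar a] \neq 0$, I would use the $\mathfrak{m}$-adic filtration of $\Pi$ and track the \emph{linear part}, i.e. the coordinate along $a$ of an element of $\mathfrak{m}$ in $\mathfrak{m}/\mathfrak{m}^2$. If $\bar a$ were a boundary, then $\bar a = dw + b(\sum_j x_j \otimes \bar y_j)$ for some $w$ of internal degree $-(p+1)$ and some $\sum_j x_j \otimes \bar y_j \in C_1$ of internal degree $-p$ (with $|x_j| + |y_j| = -p$). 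Since $W$ is reduced and a linear combination of paths of $\overline{Q}^V$ of length $\geq 3$ — this is precisely where the ``good'' hypothesis enters, via the explicit formulas for $d$ on the arrows and on the $t_i$ — the differential carries every arrow of $\widetilde{Q}^V$ into $\mathfrak{m}^2$, hence $d(\mathfrak{m}^k) \subseteq \mathfrak{m}^{k+1}$; so $dw \in \mathfrak{m}^2$ has vanishing linear part. For the Hochschild term, $b(x \otimes \bar y) = x y - (-1)^{|x||y|} y x$ has $\mathfrak{m}$-adic order $1$ only when $x \in l$, in which case it is a linear combination of the elements $(\delta_{i,t(b)} - \delta_{i,s(b)})\, b$ over arrows $b$, whose coordinate along the loop $a$ vanishes because $s(a) = t(a)$. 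Thus every boundary has vanishing coordinate along $a$, whereas $\bar a$ has coordinate $1$; so $[\bar a] \neq 0$ in $HH_p(\Pi)$.

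If $p \neq 0$, this already contradicts $HH_p(\Pi) = 0$. If $p = 0$, the same bookkeeping shows that boundaries in homological degree $0$ also have vanishing image in $\Pi/\mathfrak{m} = l$, so $[\bar a], [\bar e_1], \dots, [\bar e_{|Q_0|}]$ are linearly independent in $HH_0(\Pi)$, forcing $\dim_k HH_0(\Pi) \geq |Q_0| + 1$ and contradicting $HH_0(\Pi) \simeq \prod_{|Q_0|} k$. I expect the real difficulty to be bookkeeping rather than conceptual: pinning down the pseudo-compact Hochschild complex together with its bigrading and sign conventions, and making the ``linear part'' argument rigorous in the completed setting — in particular verifying that both $d$ and $b$ strictly raise $\mathfrak{m}$-adic order on the components that could reach the class of $a$, which is exactly the role played by the reducedness (length $\geq 3$) of $W$.
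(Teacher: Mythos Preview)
Your proposal is correct and follows essentially the same approach as the paper: both argue by contraposition, place the loop $a$ as a degree-$p$ cycle in the zeroth column of the Hochschild complex, and show it is not a boundary by observing that $d$ raises $\mathfrak{m}$-adic order (reducedness of $W$) while the Hochschild differential $\partial_1$ cannot produce a single loop. Your version is in fact more explicit than the paper's on the second point---the paper simply asserts $\mathrm{Im}\,\partial_1 \cap \{\text{loops of }\widetilde{Q}^V\} = \emptyset$, whereas you unpack why the commutator $xy - (-1)^{|x||y|}yx$ has vanishing $a$-coordinate using $s(a)=t(a)$---and your handling of the $p=0$ case (linear independence of $[\bar a], [\bar e_1],\dots,[\bar e_{|Q_0|}]$) is also spelled out more carefully.
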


\begin{proof}
Let $a$ be a loop of ${\overline{Q}}^V$ at some vertex $i$ with zero
differential and of degree $-p$. The element $a$ lies in the
rightmost column of the Hochschild complex $X$ of $\Pi$. By
assumption the differential $d(a)$ is zero, so $a$ is an element in
$HH_p(\Pi)$. Now we claim that $a$ is a nonzero element in
$HH_p(\Pi)$.

First, the superpotential $W$ is a linear combination of paths of
length at least 3, so $d(\widetilde{Q}_1^V) \subseteq
{\mathfrak{m}}^2$, where $\mathfrak{m}$ is the two-sided ideal of
$\Pi$ generated by the arrows of $\widetilde{Q}^V$. Second, it is
obvious that the relation $\mbox{Im} {\partial}_1 \cap \{
\mbox{loops\, of}\,\,{\widetilde{Q}}^V \} = \emptyset$ holds.
Therefore, the loop $a$ can not be written in the form $\sum
d(\gamma) + \sum
\partial_1 (u \otimes v)$ for paths $\gamma \in e_i {\mathfrak{m}}
e_i$ and $u, v$ paths of ${\widetilde{Q}}^V$, which means that $a$
is a nonzero element in $HH_p(\Pi)$.

Note that the trivial paths associated to the vertices of $Q$ are
nonzero elements in $HH_0 (\Pi)$. Hence, we get a contradiction to
the isomorphism in the assumption. As a result, the quiver
${\overline{Q}}^V$ does not contain loops with zero differential and
of degree $-p$.
\end{proof}




\begin{cor}\label{41}
Keep the notation as in Theorem \ref{35} and let $m = 1$. Suppose
that $\Pi$ is $1$-rigid. Then the new quiver $Q'$ does not contain
loops.
\end{cor}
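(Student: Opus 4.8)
The plan is to transport the hypothesis on $\Pi$ along the derived equivalences constructed in Section~7 to the good completed deformed preprojective dg algebra $\Pi' = \widehat{\Pi}(Q',3,W')$ produced by Theorem~\ref{35}, and then to invoke Proposition~\ref{40}. First I would recall that $\Pi'$ is linked to $\Pi$ by the chain of derived equivalences
\[
{\mathcal {D}}(\Pi') \xrightarrow{\sim} {\mathcal {D}}(E) \xrightarrow{\sim} {\mathcal {D}}(A) \xrightarrow{\sim} {\mathcal {D}}(\Pi),
\]
where the first functor is induced by the quasi-isomorphism $\Pi' \to E$ and the other two are the Morita equivalences $- \overset{L}{\otimes}_{E} A$ and $- \overset{L}{\otimes}_{A} Z$, all the dg algebras involved being flat over $k$. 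Since pseudo-compact Hochschild homology is preserved along such equivalences (Theorem~\ref{33}), one gets $HH_p(\Pi') \simeq HH_p(\Pi)$ for every $p$.

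Next I would spell out the $1$-rigidity hypothesis. For $m = 1$ the vanishing range $1 \le p \le m-1$ is empty, so $\Pi$ being $1$-rigid means precisely that $HH_0(\Pi) \simeq l = {\prod}_{|Q_0|}k$. Combining this with the isomorphism of the previous paragraph and with the equality $|Q'_0| = |Q_0|$ from Theorem~\ref{35}~1), I would conclude $HH_0(\Pi') \simeq {\prod}_{|Q'_0|}k$. This is exactly the hypothesis of Proposition~\ref{40} for the value $p = 0$, and applying that proposition yields that ${\overline{Q'}}^V$ contains no loop of degree $0$ with zero differential.

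It then remains to observe that, when $m = 1$, every loop of $Q'$ is automatically of this forbidden type. Goodness of $\Pi'$ forces the quiver $Q'$ to be concentrated in degree $0$ (the interval $[-m/2,0]$ reduces to $\{0\}$), so any loop of $Q'$ has degree $0$; moreover the arrows of $\widetilde{Q'}^V$ lie in degrees $0$, $-1$ and $-2$, hence every path of $\widetilde{Q'}^V$ has nonpositive degree, $\Pi'$ has no component in degree $1$, and the differential of any degree $0$ element ---in particular of such a loop--- vanishes automatically. A loop of $Q' \subseteq {\overline{Q'}}^V$ would therefore contradict the conclusion of the previous paragraph, so $Q'$ has no loops.

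The argument is short because all the substantial work is already in place; the only point that needs genuine care is checking that the functors built in Section~7 really do satisfy the flatness and cofibrancy hypotheses under which Theorem~\ref{33} transports Hochschild homology along the whole chain ---this has essentially been recorded in Section~7, where Theorem~\ref{33} was already used to carry strong Calabi-Yauness along the same equivalences. Beyond that I do not expect any serious obstacle: once $HH_0(\Pi') \simeq {\prod}_{|Q'_0|}k$ is established, Proposition~\ref{40} together with the degree bookkeeping of the third paragraph finishes the proof.
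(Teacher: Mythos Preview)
Your proposal is correct and follows essentially the same route as the paper: transport Hochschild homology along the derived equivalences via Theorem~\ref{33} to conclude that $\Pi'$ is $1$-rigid, observe that for $m=1$ every arrow of $Q'$ has degree~$0$ and hence zero differential, and then apply Proposition~\ref{40} with $p=0$. The paper's proof is terser---it compresses your third paragraph into the single remark ``every arrow of $Q'$ has zero degree and thus has zero differential''---but the logic is identical.
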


\begin{proof}

It follows from statement 1) in Theorem \ref{35} that $E$ is
quasi-isomorphic to some good completed deformed preprojective dg
algebra $\Pi' = {\widehat{\Pi}}(Q',3,W')$. Then following Theorem
\ref{33} and the analysis before Theorem \ref{35}, we can obtain
that the dg algebras $\Pi$ and $\Pi'$ have isomorphic Hochschild
homology. Therefore, the new dg algebra $\Pi'$ is also $1$-rigid.
Note that every arrow of $Q'$ has zero degree and thus has zero
differential. Hence, by Proposition \ref{40} the quiver $Q'$ does
not contain loops.
\end{proof}

\begin{conj}\label{43}
Let $\Pi = {\widehat{\Pi}(Q,m+2,W)}$ be an $m$-rigid good completed
deformed preprojective dg algebra whose zeroth homology $H^0 \Pi$ is
finite dimensional. Then any liftable almost complete $m$-cluster
tilting object has exactly $m+1$ complements in ${\mathcal
{C}}_{\Pi}$.
\end{conj}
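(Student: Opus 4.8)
The plan is to combine the reduction carried out in Section~7 with the loop-detection result Proposition~\ref{40}, transporting $m$-rigidity along derived equivalences by means of Theorem~\ref{33}. For $m=1$ the statement is in fact already a theorem: by Corollary~\ref{41} the quiver $Q'$ constructed below has no loops, and Theorem~\ref{35}(2) then applies. So assume $m\geq 2$.

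First I would reproduce the set-up of Section~7. Let $Y$ be a liftable almost complete $m$-cluster tilting object in ${\mathcal {C}}_{\Pi}$ and pick a basic silting object $Z$ in per$\Pi$ with $\pi(Z/Z')\simeq Y$ for an indecomposable direct summand $Z'$; by Lemma~\ref{34} we may assume $Z$ is minimal perfect and cofibrant. Put $A={\rm Hom}_{\Pi}^{\bullet}(Z,Z)$ and $E={\tau}_{\leq 0}A$. By Theorem~\ref{35}(1), $E$ is quasi-isomorphic to a good completed deformed preprojective dg algebra $\Pi'=\widehat{\Pi}(Q',m+2,W')$ with $H^0\Pi'$ finite dimensional and $|Q'_0|=|Q_0|$, and the composite equivalence ${\mathcal {D}}(\Pi')\to{\mathcal {D}}(E)\to{\mathcal {D}}(A)\to{\mathcal {D}}(\Pi)$ carries $\Pi'$ to $Z$, a $P$-indecomposable $e_j\Pi'$ to $Z'$, and descends to an equivalence of generalized $m$-cluster categories sending $\pi(\Pi'/e_j\Pi')$ to $Y$. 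Applying Theorem~\ref{33} to each of the three equivalences (all the algebras involved are $k$-flat) yields $HH_{\ast}(\Pi')\simeq HH_{\ast}(\Pi)$, so $\Pi'$ inherits the $m$-rigidity of $\Pi$: $HH_0(\Pi')\simeq l$ and $HH_p(\Pi')=0$ for $1\leq p\leq m-1$.

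By Theorem~\ref{35}(2) it then suffices to show that $Q'$ has no loops at $j$; since the relevant vertex $j$ depends on the chosen lift, it is cleanest to prove loop-freeness at every vertex of $Q'$, after which $Y$ has exactly $m+1$ complements (the lower bound $m+1$ also following from Theorem~\ref{26}). As $\Pi'$ is good, $Q'$ is concentrated in degrees $[-m/2,0]$, so a loop $a$ of $Q'$ has degree $-p$ with $0\leq p\leq\lfloor m/2\rfloor\leq m-1$. If $p=0$ then $da$ has degree $1$, hence vanishes since all paths of ${\overline{Q'}}^V$ have non-positive degree, and Proposition~\ref{40} with $HH_0(\Pi')\simeq l$ excludes degree-zero loops. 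If $1\leq p\leq\lfloor m/2\rfloor$ then $HH_p(\Pi')=0$, and Proposition~\ref{40} excludes every loop of degree $-p$ \emph{with zero differential}.

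The case that escapes this argument, and where I expect the genuine obstacle to lie, is a loop $a$ of $Q'$ of negative degree $-p$ with $da\neq 0$: Proposition~\ref{40} as stated says nothing in that situation. The way forward would be to strengthen Proposition~\ref{40} so that such a loop still represents a nonzero class in $HH_p(\Pi')$. Morally this ought to hold, since $W'$ is reduced --- whence $d({\widetilde{Q'}}_1^V)\subseteq{\mathfrak{m}}^2$ --- and a freshly adjoined arrow cannot be cancelled by lower-length (lower Hochschild-filtration) contributions; but making this rigorous requires a direct analysis of the pseudo-compact Hochschild bicomplex of $\Pi'$, controlling how the now-nonzero internal differential on $a$ interacts with the Hochschild differential, possibly through the associated mixed complex and the Connes operator $B$ of the strong $(m+2)$-Calabi-Yau structure. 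I expect this Hochschild-homology computation to be the crux of the conjecture; it is precisely the point at which the otherwise complete chain of implications of Sections~4--7 stalls. Once $Q'$ is known to be loop-free, Theorem~\ref{16} via Theorem~\ref{35}(2) completes the proof.
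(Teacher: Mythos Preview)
The statement is labelled a \emph{conjecture} in the paper, and the paper does not supply a proof. What you have written is not a proof either---as you yourself acknowledge---but rather an outline of the strategy together with a precise identification of the missing step. Your outline coincides with the paper's own discussion immediately following the conjecture: transport $m$-rigidity to $\Pi'$ via Theorem~\ref{33}, use Proposition~\ref{40} to exclude loops of zero differential (in particular all degree-zero loops), and observe that the remaining obstacle is a loop of $Q'$ of negative degree with nonzero differential, which Proposition~\ref{40} does not touch. The paper states this gap in almost the same words: ``It seems that we would like to get a stronger result than Proposition~\ref{40}, that is, $m$-rigidity implies that ${\overline{Q'}}^V$ does not contain loops (not only loops with zero differential).''

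So your proposal is correct as an analysis of the problem and matches the paper's approach exactly; but you should be aware that neither you nor the paper closes the gap, and the statement remains open. Your remark that the $m=1$ case is already a theorem via Corollary~\ref{41} and Theorem~\ref{35}(2) is also consistent with the paper.
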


Following the same procedure as in the proof of Corollary \ref{41},
we know that the good completed deformed preprojective dg algebra
$\Pi' = {\widehat{\Pi}(Q',m+2,W')}$ in Theorem \ref{35} is also
$m$-rigid, and the new quiver $Q'$ does not contain loops of degree
zero. It seems that we would like to get a stronger result than
Proposition \ref{40}, that is, $m$-rigidity implies that
${\overline{Q'}}^V$ does not contain loops (not only loops with zero
differential). If this is true, then it follows from statement 2) in
Theorem \ref{35} that any liftable almost complete $m$-cluster
tilting object has exactly $m+1$ complements in ${\mathcal
{C}}_{\Pi}$.

If Conjecture \ref{43} holds, then the $m$-rigidity property shown
in Proposition \ref{42} of the dg algebra $\Pi =
{\widehat{\Pi}}(Q,m+2,0)$ with $Q$ an acyclic quiver implies that
any liftable almost complete $m$-cluster tilting object in
${\mathcal {C}}_{\Pi}$ has exactly $m+1$ complements. Later
Proposition \ref{44} shows that any almost complete $m$-cluster
tilting object in ${\mathcal {C}}_{\Pi}$ is liftable in the `acyclic
quiver' case. Thus, on one hand, if Conjecture \ref{43} holds, we
can deduce a common result both in \cite{W} and \cite{ZZ}, namely,
any almost complete $m$-cluster tilting object in the classical
$m$-cluster category ${\mathcal {C}}_Q^{(m)}$ has exactly $m+1$
complements. On the other hand, it follows from this common result
for the classical $m$-cluster category ${\mathcal {C}}_Q^{(m)}$,
which is triangle equivalent to the corresponding generalized
$m$-cluster category ${\mathcal {C}}_{\Pi}$, that any almost
complete $m$-cluster tilting object in ${\mathcal {C}}_{\Pi}$ should
have exactly $m+1$ complements.

\vspace{.3cm}

\section{A long exact sequence and the acyclic case}

Let $A$ be a dg algebra satisfying Assumptions \ref{23}. In the
first part of this section, we give a long exact sequence to see the
relations between extension spaces in generalized $m$-cluster
categories ${\mathcal {C}}_A$ and extension spaces in derived
categories ${\mathcal {D}} (= {\mathcal {D}}(A))$. If the extension
spaces between two objects of ${\mathcal {C}}_A$ are zero, in some
cases, we can deduce that the extension spaces between these two
objects are also zero in the derived category $\mathcal {D}$.

\begin{prop} \label{14}
Suppose that $X$ and $Y$ are two objects in the fundamental domain
$\mathcal {F}$. Then there is a long exact sequence $$0 \ra {\rm
Ext}^1_{\mathcal {D}}(X,Y) \ra {\rm Ext}^1_{{\mathcal {C}}_A}(X,Y)
\ra D{\rm Ext}^m_{\mathcal {D}}(Y,X)$$ $$ \ra {\rm Ext}^2_{\mathcal
{D}}(X,Y) \ra {\rm Ext}^2_{{\mathcal {C}}_A}(X,Y) \ra D{\rm
Ext}^{m-1}_{\mathcal {D}}(Y,X)
$$ $$\ra \quad \cdots \quad\quad \cdots \quad \ra$$ $${\rm
Ext}^m_{\mathcal {D}}(X,Y) \ra {\rm Ext}^m_{{\mathcal {C}}_A}(X,Y)
\ra D{\rm Ext}^1_{\mathcal {D}}(Y,X) \ra 0 .$$
\end{prop}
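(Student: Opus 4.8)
The plan is to unravel the quotient functor $\pi \colon \mathrm{per}A \to \mathcal{C}_A$ at the level of $\mathrm{Hom}$-spaces by combining the Calabi--Yau property of $\mathcal{D}_{fd}(A)$ (Lemma \ref{24}) with the structure of $\mathcal{C}_A$ as a Verdier localization, exactly as Amiot did in the $2$-Calabi--Yau case. Recall that $\mathcal{C}_A = \mathrm{per}A / \mathcal{D}_{fd}(A)$, so a morphism $\pi(X) \to \Sigma^i \pi(Y)$ is represented by a roof $X \leftarrow X' \to \Sigma^i Y$ with cone of $X' \to X$ in $\mathcal{D}_{fd}(A)$. Since both $X$ and $Y$ lie in the fundamental domain $\mathcal{F} = \mathcal{D}^{\leq 0} \cap {}^{\perp}\mathcal{D}^{\leq -m-1} \cap \mathrm{per}A$, I would first localize the computation: for an object $N \in \mathcal{D}_{fd}(A)$ one can truncate $N$ with respect to the standard $t$-structure, and the parts of $N$ living in degrees that are "too high" or "too low" contribute nothing because $X, Y \in \mathcal{F}$ and because $\mathrm{Hom}(\mathcal{D}^{\leq a}, \mathcal{D}^{\geq b}) = 0$ for $a < b$. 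This is the same bookkeeping that appears in the proof of Lemma \ref{18}.

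The key step is to produce, for each $i$ with $1 \le i \le m$, a short exact sequence
\[
0 \ra \mathrm{Ext}^i_{\mathcal{D}}(X,Y) \ra \mathrm{Ext}^i_{\mathcal{C}_A}(X,Y) \ra D\,\mathrm{Ext}^{m+1-i}_{\mathcal{D}}(Y,X) \ra 0,
\]
and then to splice these together into the asserted long exact sequence. The middle term should be analyzed via the triangle $\mathcal{D}_{fd}(A) \hookrightarrow \mathrm{per}A \to \mathcal{C}_A$: concretely, writing a morphism in $\mathcal{C}_A$ as a fraction and using the standard $t$-structure truncation $\tau_{\leq 0}$ on the "defect" object $N \in \mathcal{D}_{fd}(A)$, one shows that the morphism $X \to \Sigma^i Y$ either already lifts to $\mathcal{D}$ (giving the injection of $\mathrm{Ext}^i_{\mathcal{D}}$) or its obstruction is measured by a morphism from a finite-dimensional truncation into a shift of $Y$, which by the $(m+2)$-Calabi--Yau duality of $\mathcal{D}_{fd}(A)$ (Lemma \ref{24}) is dual to $\mathrm{Hom}_{\mathcal{D}}(Y, \Sigma^{m+2-i} \tau_{\leq ?} X) = \mathrm{Ext}^{m+1-i}_{\mathcal{D}}(Y,X)$ once the constraints $X,Y \in \mathcal{F}$ are used to identify the truncation with $X$ itself. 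I would set this up as a long exact sequence obtained by applying $\mathrm{Hom}_{\mathcal{D}}(X, -)$ to a triangle relating $Y$, its "completion" inside $\mathcal{C}_A$, and a finite-dimensional object, and then read off the terms; the vanishing of the extreme terms (giving the $0$'s at the two ends, and injectivity at $\mathrm{Ext}^1$) comes from $X, Y \in \mathcal{D}^{\le 0}$ together with $X,Y \in {}^{\perp}\mathcal{D}^{\le -m-1}$.

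The main obstacle I anticipate is not the Calabi--Yau duality itself but the careful identification of the connecting maps, i.e.\ checking that the boundary morphism $D\,\mathrm{Ext}^{m+1-i}_{\mathcal{D}}(Y,X) \to \mathrm{Ext}^{i+1}_{\mathcal{D}}(X,Y)$ coming from the $i$-th short exact sequence agrees with the one coming from the $(i+1)$-st, so that the pieces genuinely glue into a single long exact sequence rather than merely a collection of unrelated short exact sequences. This requires tracking a single triangle (the one expressing $N \in \mathcal{D}_{fd}(A)$, or rather its relevant truncation, as a cone) through all degrees simultaneously and invoking naturality of the Calabi--Yau isomorphism in Lemma \ref{24}. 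A secondary technical point is to ensure that the truncation functors interact correctly with the condition ${}^{\perp}\mathcal{D}^{\le -m-1}$: one must verify that the relevant truncations of the defect objects land in $\mathcal{D}_{fd}(A)$ and in the appropriate aisle, which is a direct consequence of $\mathcal{D}_{fd}(A)$ being a thick subcategory closed under the standard truncations. Once these compatibilities are in hand, the long exact sequence follows formally, and specializing $m = 1$ recovers Amiot's short exact sequence, as promised.
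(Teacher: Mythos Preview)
Your proposal contains a genuine gap: the short exact sequences
\[
0 \ra {\rm Ext}^i_{\mathcal{D}}(X,Y) \ra {\rm Ext}^i_{\mathcal{C}_A}(X,Y) \ra D\,{\rm Ext}^{m+1-i}_{\mathcal{D}}(Y,X) \ra 0
\]
that you propose to produce simply do not exist for $m>1$ in general. The content of the proposition is precisely that the connecting map $D\,{\rm Ext}^{m+1-i}_{\mathcal{D}}(Y,X) \to {\rm Ext}^{i+1}_{\mathcal{D}}(X,Y)$ can be nonzero, so for $i\geq 2$ the map ${\rm Ext}^i_{\mathcal{D}}(X,Y) \to {\rm Ext}^i_{\mathcal{C}_A}(X,Y)$ need not be injective. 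There is nothing to ``splice'': a long exact sequence is not assembled from independent short exact sequences, and your stated ``main obstacle'' (matching boundary maps from separate short exact sequences) presupposes objects that are not there. The roof-and-defect-object calculus you sketch is exactly the mechanism behind Lemma~\ref{18}; it is well suited to showing that $\pi$ induces an \emph{isomorphism} on certain ${\rm Hom}$-spaces, but it does not by itself manufacture connecting homomorphisms.

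The paper obtains the entire long exact sequence in one stroke from a single triangle. One truncates $X$ (not a defect $N$, and not $Y$) at degree $-m$,
\[
\tau_{\leq -m}X \ra X \ra \tau_{\geq 1-m}X \ra \Sigma(\tau_{\leq -m}X),
\]
and applies ${\rm Hom}_{\mathcal{D}}(-,\Sigma^t Y)$ for $1\leq t\leq m$. Two identifications finish the job. First, since each $H^pX$ is finite-dimensional and $X\in\mathcal{D}^{\leq 0}$, the truncation $\tau_{\geq 1-m}X$ lies in $\mathcal{D}_{fd}(A)$, so Calabi--Yau duality (Lemma~\ref{24}) together with $Y\in{}^{\perp}\mathcal{D}^{\leq -m-1}$ gives ${\rm Hom}_{\mathcal{D}}(\Sigma^{-t}(\tau_{\geq 1-m}X),Y)\simeq D\,{\rm Hom}_{\mathcal{D}}(Y,\Sigma^{m+2-t}X)$ for $t\leq m+1$. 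Second, for $t\leq m$ one shows, by peeling off the cohomologies $H^sX$ for $-m\leq s\leq -t$ one at a time (each step again controlled by Calabi--Yau duality), that ${\rm Hom}_{\mathcal{D}}(\Sigma^{-t}(\tau_{\leq -m}X),Y)\simeq {\rm Hom}_{\mathcal{D}}(\tau_{\leq -t}X,\Sigma^t Y)$; since $\tau_{\leq -t}X$ lies in $\Sigma^t\mathcal{F}$, the equivalence of Proposition~\ref{3} (applied to the shifted $t$-structure) identifies this with ${\rm Hom}_{\mathcal{C}_A}(\pi X,\Sigma^t\pi Y)$. The vanishing at the two ends then drops out from $X,Y\in\mathcal{F}$.
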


\begin{proof}
We have the canonical triangle
$${\tau}_{\leq -m}X \ra X \ra {\tau}_{\geq 1-m}X \ra \Sigma ({\tau}_{\leq
-m}X) ,$$ which yields the long exact sequence $$\cdots \ra {\rm
Hom}_{\mathcal {D}}({\Sigma}^{-t} ({\tau}_{\geq 1-m}X), Y) \ra {\rm
Hom}_{\mathcal {D}}({\Sigma}^{-t} X, Y) \ra {\rm Hom}_{\mathcal
{D}}({\Sigma}^{-t} ({\tau}_{\leq -m}X), Y) \ra \cdots , \quad t \in
\mathbb{Z}.$$

\smallskip

Step 1. {\it The isomorphism $${\rm Hom}_{\mathcal
{D}}({\Sigma}^{-t}({\tau}_{\geq 1-m}X),Y) \simeq D{\rm
Hom}_{\mathcal {D}}(Y,{\Sigma}^{m+2-t}X)$$ holds when $t \leq m+1$.}

By the Calabi-Yau property, there holds the isomorphism
$${\rm Hom}_{\mathcal {D}}({\Sigma}^{-t}({\tau}_{\geq 1-m}X),Y) \simeq
D{\rm Hom}_{\mathcal {D}}(Y,{\Sigma}^{m+2-t}({\tau}_{\geq 1-m}X)),
\quad t \in \mathbb{Z}. \quad \quad {\rm (8.1)}.$$ Applying the
functor ${\rm Hom}_{\mathcal {D}}(Y,-)$ to the triangle which we
start with, we obtain the exact sequence
$$(Y,{\Sigma}^{m+2-t}({\tau}_{\leq -m}X)) \ra
(Y,{\Sigma}^{m+2-t}X)  \ra (Y,{\Sigma}^{m+2-t}({\tau}_{\geq 1-m}X))
\ra (Y,{\Sigma}^{m+3-t}({\tau}_{\leq -m}X)),$$where $(-,-)$ denotes
${\rm Hom}_{\mathcal {D}}(-,-)$. When $t \leq m+1$, we have that
$(-m)-(m+2-t) \leq -m-1$. Then the objects
${\Sigma}^{m+2-t}({\tau}_{\leq -m}X)$ and
${\Sigma}^{m+3-t}({\tau}_{\leq -m}X)$ belong to ${\mathcal
{D}}^{\leq -m-1}$. Note that $Y$ is in
$^{\perp}{\mathcal{D}}^{{\leq} {-m-1}}$. Therefore, the following
isomorphism holds
$$\quad \quad \quad \quad {\rm Hom}_{\mathcal {D}}(Y,{\Sigma}^{m+2-t}({\tau}_{\geq
1-m}X)) \simeq {\rm Hom}_{\mathcal {D}}(Y,{\Sigma}^{m+2-t}X). \quad
\quad \quad \quad \quad  {\rm (8.2).}$$ As a consequence, when $t
\leq m+1$, together by (8.1) and (8.2), we have the isomorphism
$${\rm Hom}_{\mathcal {D}}({\Sigma}^{-t}({\tau}_{\geq 1-m}X),Y)
\simeq D{\rm Hom}_{\mathcal {D}}(Y,{\Sigma}^{m+2-t}X).$$ Moreover,
if $t \leq 1$, the object ${\Sigma}^{m+2-t}X$ belongs to ${\mathcal
{D}}^{\leq -m-1}$, so the space ${\rm Hom}_{\mathcal
{D}}(Y,{\Sigma}^{m+2-t}X)$ vanishes, and so does the space ${\rm
Hom}_{\mathcal {D}}({\Sigma}^{-t}({\tau}_{\geq 1-m}X),Y)$.

\smallskip

Step 2. {\it When $t \leq m$, we have the following isomorphism
$${\rm Hom}_{\mathcal {D}}({\Sigma}^{-t}({\tau}_{\leq -m}X),Y)
\simeq {\rm Hom}_{{\mathcal {C}}_A}(\pi X,{\Sigma}^t(\pi Y)).$$}

Consider the triangles $${\tau}_{\leq s-1}X \ra {\tau}_{\leq s}X \ra
{\Sigma}^{-s}(H^sX) \ra {\Sigma}({\tau}_{\leq s-1}X) , \quad \quad s
\in \mathbb{Z}.$$ Applying the functor ${\rm Hom}_{\mathcal
{D}}(-,Y)$ to these triangles, we can obtain the following long
exact sequences $$\cdots \ra ({\Sigma}^{-s-t}(H^sX),Y) \ra
({\Sigma}^{-t}({\tau}_{\leq s}X),Y) \ra ({\Sigma}^{-t}({\tau}_{\leq
s-1}X),Y) \ra ({\Sigma}^{-s-t-1}(H^sX),Y) \ra \cdots,$$where $(-,-)$
denotes ${\rm Hom}_{\mathcal {D}}(-,-)$. Using the Calabi-Yau
property, we have that $${\rm Hom}_{\mathcal
{D}}({\Sigma}^{-s-t}(H^sX),Y) \simeq D{\rm Hom}_{\mathcal
{D}}(Y,{\Sigma}^{m+2-s-t}(H^sX)) , \quad \quad t \in \mathbb{Z}.$$
When $t \leq -s$, the inequality $m+2-s-t-1 \geq m+1$ holds. So the
two objects ${\Sigma}^{m+2-s-t}(H^sX)$ and
${\Sigma}^{m+2-s-t-1}(H^sX)$ belong to ${\mathcal {D}}^{\leq -m-1}$.
Therefore, ${\rm Hom}_{\mathcal {D}}({\Sigma}^{-s-t}(H^sX),Y)$ and
the space ${\rm Hom}_{\mathcal {D}}({\Sigma}^{-s-t-1}(H^sX),Y)$ are
zero, and the following isomorphism
$${\rm Hom}_{\mathcal {D}}({\Sigma}^{-t}({\tau}_{\leq s}X),Y)
\simeq {\rm Hom}_{\mathcal {D}}({\Sigma}^{-t}({\tau}_{\leq
s-1}X),Y)$$ holds. As a consequence, we can get the following
isomorphisms $${\rm Hom}_{\mathcal {D}}({\Sigma}^{-t}({\tau}_{\leq
-t}X),Y) \simeq {\rm Hom}_{\mathcal {D}}({\Sigma}^{-t}({\tau}_{\leq
-t-1}X),Y) \simeq \cdots \simeq {\rm Hom}_{\mathcal
{D}}({\Sigma}^{-t}({\tau}_{\leq -m}X),Y),\, t \leq m. \, {\rm
(8.3)}.$$ Since the functor $\pi : {\rm per}A \ra {\mathcal {C}}_A$
induces an equivalence from ${\Sigma}^{t}{\mathcal {F}}$ to
$\mathcal {C}$ (Proposition \ref{3} applies to shifted
$t$-structure), the following bijections are true
$${\rm Hom}_{{\mathcal {C}}_A}(\pi X,{\Sigma}^t(\pi Y)) \simeq {\rm Hom}_{{\mathcal
{C}}_A}(\pi ({\tau}_{\leq -t}X),\pi ({\Sigma}^{t}Y)) \simeq {\rm
Hom}_{\mathcal {D}}({\tau}_{\leq -t}X, {\Sigma}^t Y).\, \quad \quad
\quad {\rm (8.4)}.$$ Hence, when $t \leq m$, together by (8.3) and
(8.4), we have the isomorphism
$${\rm Hom}_{\mathcal {D}}({\Sigma}^{-t}({\tau}_{\leq -m}X),Y)
\simeq {\rm Hom}_{{\mathcal {C}}_A}(\pi X,{\Sigma}^t(\pi Y)).$$

Therefore, the long exact sequence at the beginning becomes $$0 =
{\rm Hom}_{\mathcal {D}}({\Sigma}^{-1}({\tau}_{\geq 1-m}X),Y) \ra
{\rm Ext}^1_{\mathcal {D}}(X,Y) \ra {\rm Ext}^1_{{\mathcal
{C}}_A}(X,Y) \ra D{\rm Ext}^m_{\mathcal {D}}(Y,X)$$ $$ \ra {\rm
Ext}^2_{\mathcal {D}}(X,Y) \ra {\rm Ext}^2_{{\mathcal {C}}_A}(X,Y)
\ra D{\rm Ext}^{m-1}_{\mathcal {D}}(Y,X)
$$ $$\ra \quad \cdots \quad\quad \cdots \quad \ra$$ $${\rm
Ext}^m_{\mathcal {D}}(X,Y) \ra {\rm Ext}^m_{{\mathcal {C}}_A}(X,Y)
\ra D{\rm Ext}^1_{\mathcal {D}}(Y,X) \ra {\rm Hom}_{\mathcal
{D}}({\Sigma}^{-m-1}X,Y) = 0 .$$ This concludes the proof.
\end{proof}

\begin{rems}

1) When $m =1$, the long exact sequence in Proposition \ref{14}
becomes the following short exact sequence (already appearing in the
proof of Proposition \ref{39})
$$\quad \quad 0 \ra {\rm Ext}^1_{\mathcal {D}}(X,Y) \ra {\rm
Ext}^1_{{\mathcal {C}}_A}(X,Y) \ra D{\rm Ext}^1_{\mathcal {D}}(Y,X)
\ra 0  \quad \quad {\rm (8.5)},$$ which was presented in \cite{Am08}
for the Hom-finite $2$-Calabi-Yau case, and also was presented in
\cite{Pla} for the Jacobi-infinite $2$-Calabi-Yau case.

2) If $T$ is an object in the fundamental domain $\mathcal {F}$
satisfying $${\rm Ext}^{i}_{\mathcal {D}}(T,T) = 0, \quad i=1,
\ldots, m,$$ then the long exact sequence in Proposition \ref{14}
implies that the spaces ${\rm Ext}^i_{{\mathcal {C}}_A}(T,T)$ also
vanish for integers $1 \leq i \leq m$.
\end{rems}


Suppose that $X$ and $Y$ are two objects in the fundamental domain.
It is clear that ${\rm Ext}^i_{\mathcal {D}}(X,Y)$ vanishes when $i
> m$, since X belongs to $\mathcal {F}$ and ${\Sigma}^i Y$ lies in ${\mathcal {D}}^{\leq
-m-1}$. Now we assume that the spaces ${\rm Ext}^i_{{\mathcal
{C}}_A}(X,Y)$ are zero for integers $1 \leq i \leq m$. What about
the extension spaces ${\rm Ext}^i_{\mathcal {D}}(X,Y)$ in the
derived category? Do they always vanish?

When $m=1$, the short exact sequence (8.5) implies that the space
${\rm Ext}^1_{\mathcal {D}}(X,Y)$ vanishes.

When $m > 1$, we will give the answer for completed Ginzburg dg
categories (the same as completed deformed preprojective dg algebras
in this case) arising from acyclic quivers.

\begin{prop}\label{21}
Let $Q$ be an acyclic quiver. Let $\Gamma$ be the completed Ginzburg
dg category ${\widehat{\Gamma}_{m+2}}(Q,0)$ and ${\mathcal
{C}}_{\Gamma}$ the generalized $m$-cluster category. Suppose that
$X$ and $Y$ are two objects in the fundamental domain $\mathcal {F}$
which satisfy
$${\rm Ext}^i_{{\mathcal {C}}_{\Gamma}}(X,Y) = 0, \quad i = 1, \ldots, m.$$
Then the extension spaces ${\rm Ext}^i_{{\mathcal
{D}}(\Gamma)}(X,Y)$ vanish for all positive integers $i$.
\end{prop}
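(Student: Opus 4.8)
The plan is to exploit the long exact sequence of Proposition \ref{14}, which relates $\mathrm{Ext}$-groups in ${\mathcal {C}}_{\Gamma}$ with $\mathrm{Ext}$-groups in ${\mathcal {D}}(\Gamma)$. Since $Q$ is acyclic, the completed Ginzburg dg category ${\widehat{\Gamma}}_{m+2}(Q,0)$ has zero differential, so $\Gamma$ is just the graded path algebra of the graded quiver ${\widetilde{Q}}^G$; in particular its homology is $\Gamma$ itself and, as already observed in the excerpt, the positive ${\mathrm{Ext}}$-groups ${\rm Ext}^i_{{\mathcal {D}}(\Gamma)}(X,Y)$ automatically vanish for $i > m$ whenever $X, Y \in \mathcal {F}$. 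So it remains to show that ${\rm Ext}^i_{{\mathcal {D}}(\Gamma)}(X,Y) = 0$ for $1 \leq i \leq m$.

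First I would feed the hypothesis ${\rm Ext}^i_{{\mathcal {C}}_{\Gamma}}(X,Y) = 0$ ($1 \leq i \leq m$) into the long exact sequence of Proposition \ref{14}. The middle column of that sequence then collapses, and the sequence breaks up into isomorphisms
$${\rm Ext}^i_{{\mathcal {D}}(\Gamma)}(X,Y) \simeq D\,{\rm Ext}^{m+1-i}_{{\mathcal {D}}(\Gamma)}(Y,X), \qquad 1 \leq i \leq m.$$
Wait — one has to read the connecting maps carefully: from $0 \to {\rm Ext}^1_{{\mathcal {D}}}(X,Y) \to {\rm Ext}^1_{{\mathcal {C}}_{\Gamma}}(X,Y) = 0$ we already get ${\rm Ext}^1_{{\mathcal {D}}}(X,Y) = 0$, and dually (swapping the roles of $X$ and $Y$, which are symmetric hypotheses? — no, the hypothesis is only on ${\rm Ext}(X,Y)$, not ${\rm Ext}(Y,X)$). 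So the honest extraction is: the vanishing of the middle terms forces $D\,{\rm Ext}^{m+1-i}_{{\mathcal {D}}}(Y,X) \cong {\rm Ext}^{i+1}_{{\mathcal {D}}}(X,Y)$ as the pieces of the sequence, together with ${\rm Ext}^1_{{\mathcal {D}}}(X,Y) = 0$ and $D\,{\rm Ext}^1_{{\mathcal {D}}}(Y,X) = 0$ at the two ends. Chaining these, ${\rm Ext}^1_{{\mathcal {D}}}(X,Y) = 0$ and then inductively each ${\rm Ext}^{i}_{{\mathcal {D}}}(X,Y)$ is identified with $D\,{\rm Ext}^{m+2-i}_{{\mathcal {D}}}(Y,X)$ — so I need an independent handle on ${\rm Ext}^{j}_{{\mathcal {D}}}(Y,X)$ for large $j$.

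The key extra input is that for an acyclic quiver the derived category ${\mathcal {D}}(\Gamma)$ is governed by a hereditary algebra. Concretely, $H^0\Gamma = kQ$ has global dimension $\leq 1$, and by \cite{Ke09}/\cite{Am08}-type arguments the bounded derived category of $kQ$ embeds into ${\mathcal {D}}(\Gamma)$, with $\mathcal {F}$ playing the role of a fundamental domain for the orbit category realizing ${\mathcal {C}}_{\Gamma} \simeq {\mathcal {D}}^b(kQ)/(\tau^{-1}\Sigma^m)$. Because $kQ$ is hereditary, for objects $X,Y$ that after the projection $\pi$ land in ${\mathcal {D}}^b(kQ)$, the derived $\mathrm{Hom}$ decomposes: ${\rm Hom}_{{\mathcal {D}}(\Gamma)}(X,\Sigma^i Y)$ can be nonzero for at most one value of $i$ in a suitable range coming from the $\Sigma$-graded structure of $\mathcal {F}$. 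The plan is: identify $\pi X, \pi Y$ with objects of ${\mathcal {D}}^b(kQ)$, write $X,Y$ as explicit minimal perfect / cofibrant representatives supported in degrees $[-m,0]$ (as in Section 4), and use the hereditary structure of $kQ$ plus the shape of the Ginzburg differential (which is zero here) to show that ${\rm Ext}^j_{{\mathcal {D}}(\Gamma)}(Y,X) = 0$ for $j \geq 2$ — so the isomorphism chain above forces ${\rm Ext}^i_{{\mathcal {D}}(\Gamma)}(X,Y) = 0$ for $1 \leq i \leq m-1$, and the $i = m$ case is then ${\rm Ext}^m_{{\mathcal {D}}(\Gamma)}(X,Y) \cong D\,{\rm Ext}^1_{{\mathcal {D}}(\Gamma)}(Y,X)$, which I'd handle by noting $X$ already belongs to $\mathcal{F}$ and $\Sigma Y$ to ${\mathcal {D}}^{\leq -m-1}$, combined with the hereditary constraint.

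The main obstacle I expect is the bookkeeping in the middle step: carefully matching the five-term pieces of the long exact sequence so that the hypothesis ${\rm Ext}^\bullet_{{\mathcal {C}}_{\Gamma}}(X,Y)=0$ (which is one-sided in $X,Y$) propagates to the one-sided conclusion, rather than needing the symmetric hypothesis on ${\rm Ext}^\bullet_{{\mathcal {C}}_{\Gamma}}(Y,X)$. The resolution should be that the connecting isomorphisms alternate sides, so that $m$ consecutive vanishings in the middle pin down all the $X$-to-$Y$ extensions once we know the top and bottom $\mathrm{Ext}^1$'s vanish — and the bottom one vanishes for degree reasons ($\Sigma^{m+1} X \in {\mathcal {D}}^{\leq -m-1}$, $Y \in {}^\perp{\mathcal {D}}^{\leq -m-1}$), while the hereditariness of $kQ$ is exactly what kills the higher $\mathrm{Ext}^j(Y,X)$ that would otherwise obstruct the chain. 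Once that combinatorial bridge is in place, the statement follows.
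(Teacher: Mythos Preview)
Your approach has a genuine gap. Two concrete problems:

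First, the claim that $\widehat{\Gamma}_{m+2}(Q,0)$ has zero differential is false: even with $W=0$, one has $d(t_i)=e_i(\sum_{a}[a,a^*])e_i\neq 0$. So $\Gamma$ is not just the graded path algebra of $\widetilde{Q}^G$.

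Second, and more seriously, the long exact sequence of Proposition~\ref{14} alone cannot close the argument. With the middle terms ${\rm Ext}^i_{{\mathcal C}_\Gamma}(X,Y)$ set to zero, you correctly extract ${\rm Ext}^1_{\mathcal D}(X,Y)=0$, ${\rm Ext}^1_{\mathcal D}(Y,X)=0$, and isomorphisms ${\rm Ext}^i_{\mathcal D}(X,Y)\simeq D\,{\rm Ext}^{m+2-i}_{\mathcal D}(Y,X)$ for $2\le i\le m$. (By $(m+1)$-Calabi--Yau duality in ${\mathcal C}_\Gamma$ the hypothesis is in fact symmetric in $X,Y$, so the same holds with $X$ and $Y$ swapped.) But this is a closed system of dualities: it does \emph{not} force any of these spaces to vanish. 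Your proposed escape---``hereditariness of $kQ$ gives ${\rm Ext}^j_{{\mathcal D}(\Gamma)}(Y,X)=0$ for $j\ge 2$''---is false for general $X,Y\in\mathcal F$: for $m\ge 2$ take $X=\Gamma$ and $Y=\Sigma^2\Gamma$ (both in $\mathcal F$), then ${\rm Ext}^2_{{\mathcal D}(\Gamma)}(Y,X)\simeq {\rm Hom}_{{\mathcal D}(\Gamma)}(\Gamma,\Gamma)=H^0\Gamma\neq 0$. The algebra $\Gamma$ is $(m+2)$-Calabi--Yau, not hereditary, and ${\mathcal D}(\Gamma)$ is not ${\mathcal D}^b(kQ)$.

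The paper's proof does not use Proposition~\ref{14} at all. Instead it uses the Calabi--Yau completion description $\Gamma\simeq T_B(\Theta)$ with $B=kQ$ and $\Theta=\Sigma^{m+1}\Omega$, writes $X=X'\overset{L}\otimes_B\Gamma$, $Y=Y'\overset{L}\otimes_B\Gamma$, and computes directly
\[
{\rm Ext}^i_{{\mathcal D}(\Gamma)}(X,Y)\;\simeq\;\bigoplus_{p\ge 0}{\rm Ext}^i_{{\mathcal D}_{fd}(B)}\bigl(X',(\tau^{-1}\Sigma^m)^pY'\bigr),
\]
while ${\rm Ext}^i_{{\mathcal C}_Q^{(m)}}(X',Y')$ is the same sum over all $p\in\mathbb Z$. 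Thus the derived ${\rm Ext}$ is a direct summand of the cluster-category ${\rm Ext}$, and vanishing of the latter forces vanishing of the former. You gesture at the orbit-category picture but then abandon it for the incorrect hereditariness claim; developing that picture into the direct-summand statement is precisely the missing idea.
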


\begin{proof}
Let $B$ be the path algebra $kQ$ and $\Omega$ the inverse dualizing
complex $R {\rm Hom}_{B^e} (B, B^e)$. Set $\Theta =
{\Sigma}^{m+1}\Omega$. Then the $(m+2)$-Calabi-Yau completion
\cite{Ke09} of $B$ is the tensor dg category $${\Pi}_{m+2}(B) =
T_B(\Theta) = B \oplus \Theta \oplus (\Theta \otimes_B \Theta)
\oplus \ldots .$$ Theorem 6.3 in \cite{Ke09} shows that
${\Pi}_{m+2}(B)$ is quasi-isomorphic to the completed Ginzburg dg
category $\Gamma$. Thus, we can write $\Gamma$ as
$$\Gamma = B \oplus \Theta \oplus (\Theta \overset{L}\otimes_B
\Theta) \oplus \ldots = {\oplus}_{p \geq
0}{\Theta}^{\overset{L}\otimes_B p}.$$ Let $X', \, Y'$ be two
objects in ${\mathcal {D}}_{fd}(B)$. The following isomorphisms hold
$${\rm Hom}_{\mathcal {D}(\Gamma)}(X'\overset{L}\otimes_B\Gamma,
Y'\overset{L}\otimes_B\Gamma) \simeq {\rm Hom}_{{\mathcal
{D}}(B)}(X', Y'\overset{L}\otimes_B\Gamma|_B) \simeq {\rm
Hom}_{{\mathcal {D}}(B)}(X', Y' \overset{L}\otimes_B({\oplus}_{p
\geq 0}{\Theta}^{\overset{L}\otimes_B p}))$$ $$ \simeq {\rm
Hom}_{{\mathcal {D}}(B)}(X', \oplus_{p \geq 0}
(Y'\overset{L}\otimes_B{\Theta}^{\overset{L}\otimes_B p})) \simeq
\oplus_{p \geq 0}{\rm Hom}_{{\mathcal {D}}(B)}(X',
Y'\overset{L}\otimes_B{\Theta}^{\overset{L}\otimes_B p}).$$ By Lemma
\ref{24}, the category ${\mathcal {D}}_{fd}(B)$ admits a Serre
functor $S$ whose inverse is $- \overset{L}\otimes_B \Omega.$
Therefore, the functor $- \overset{L}\otimes_B \Theta$ is equal to
the functor $S^{-1}{\Sigma}^{m+1} (\simeq {\tau}^{-1}{\Sigma}^m)$,
where $\tau$ is the Auslander-Reiten translation. As a consequence,
we have that
$${\rm Hom}_{\mathcal {D}(\Gamma)}(X'\overset{L}\otimes_B\Gamma,
Y'\overset{L}\otimes_B\Gamma) \simeq \oplus_{p \geq 0}{\rm
Hom}_{{\mathcal {D}}_{fd}(B)}(X', (\tau^{-1}{\Sigma}^m)^pY').$$

Let ${\mathcal {C}}_{Q}^{(m)}$ be the $m$-cluster category
${\mathcal {D}}_{fd}(B)/({\tau^{-1}\Sigma^m})^{\mathbb{Z}}$.
Consider the following commutative diagram
\[
\xymatrix@C=2.5CM{{\mathcal {D}}_{fd}(B) \ar[d]_{\pi_B} \ar[r]^{-
\overset{L}\otimes_B \Gamma} &
{\rm per}\Gamma \ar[d]^{\pi_\Gamma}  \\
{\mathcal {C}}_{Q}^{(m)} \ar@<1ex>[r]_{\simeq}^{ -
\overset{L}\otimes_B \Gamma } & {\mathcal {C}}_{\Gamma} . }
\]
Under the equivalence, let $X= X' \overset{L}\otimes_B \Gamma$ and
$Y= Y' \overset{L}\otimes_B \Gamma$, so the vanishing of spaces
${\rm Ext}^i_{{\mathcal {C}}_{\Gamma}}(X,Y)$ implies that ${\rm
Ext}^i_{{\mathcal {C}}_Q^{(m)}}(X',Y')$ also vanish for integers $1
\leq i \leq m$. Note that $${\rm Ext}^i_{{\mathcal
{C}}_Q^{(m)}}(X',Y') \simeq \oplus_{p \in \mathbb{Z}} {\rm
Ext}^i_{{\mathcal {D}}_{fd}(B)}(X',(\tau^{-1}\Sigma^m)^pY').$$
Hence, we obtain that$${\rm Ext}^i_{{\mathcal {D}}(\Gamma)}(X,Y)
\simeq \oplus_{p \geq 0} {\rm Ext}^i_{{\mathcal
{D}}_{fd}(B)}(X',(\tau^{-1}\Sigma^m)^pY') = 0, \quad \quad 1 \leq i
{\leq} m.$$
\end{proof}

\smallskip

Let $Q$ be an ordinary acyclic quiver and $B$ the path algebra $kQ$.
Let $\Gamma$ be its completed Ginzburg dg category
$\widehat{\Gamma}_{m+2}(Q,0)$. Let $T$ be an $m$-cluster tilting
object in ${\mathcal {C}}_Q^{(m)}$. Then $T$ is induced from an
object $T'$ (that is, $T = \pi(T')$) in the fundamental domain
$${\mathcal {S}}_m := {\mathcal {S}}^0_m \vee {\Sigma}^m B, \quad {\rm where}
\,\, {\mathcal {S}}^0_m := {\rm mod}B \vee {\Sigma}({\rm mod}B)
\ldots \vee {\Sigma}^{m-1}({\rm mod}B).$$

\begin{lem}[\cite{BRT}] \label{37}
The object $T'$ is a partial silting object, that is,
$${\rm Hom}_{{\mathcal {D}}_{fd}(B)}(T', {\Sigma}^i T') = 0, \quad \quad i > 0;$$
and $T'$ is maximal with this property.
\end{lem}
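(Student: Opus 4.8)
The plan is to work throughout inside ${\mathcal {D}}_{fd}(B) = {\mathcal {D}}^b({\rm mod}\, B)$ (which coincides with ${\rm per}\, B$ since $B = kQ$ is hereditary), exploiting that every indecomposable there is of the form ${\Sigma}^j N$ with $N$ an indecomposable $B$-module and that ${\rm Hom}_{{\mathcal {D}}_{fd}(B)}({\Sigma}^a M, {\Sigma}^b N) \simeq {\rm Ext}^{b-a}_B(M,N)$, which vanishes unless $b-a \in \{0,1\}$. Write $T' = \bigoplus_k {\Sigma}^{a_k} X_k$ with each $X_k \in {\rm mod}\, B$ indecomposable; membership of $T'$ in ${\mathcal {S}}_m$ means $a_k \in [0,m]$, with $a_k = m$ forcing $X_k$ to be projective. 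First I would dispose of the ``easy'' range: for $i \geq m+1$ the space ${\rm Hom}_{{\mathcal {D}}_{fd}(B)}(T', {\Sigma}^i T')$ is a direct sum of groups ${\rm Ext}^{\,i + a_l - a_k}_B(X_k, X_l)$, nonzero only when $i + a_l - a_k \in \{0,1\}$; since $0 \leq a_l$ and $a_k \leq m$ this forces $i = m+1$, $a_k = m$, $a_l = 0$ and ${\rm Ext}^1_B(X_k, X_l) \neq 0$ with $X_k$ projective --- impossible. Hence it only remains to understand the degrees $1 \leq i \leq m$, and this is where the $m$-cluster tilting hypothesis enters.

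The heart of the argument is an Amiot-type collapse of the orbit Hom-spaces. Writing ${\mathcal {C}}_Q^{(m)} = {\mathcal {D}}_{fd}(B)/F^{\mathbb {Z}}$ with $F = {\tau}^{-1}{\Sigma}^m$, one has (Keller's orbit-category formula, valid here)
\[ {\rm Hom}_{{\mathcal {C}}_Q^{(m)}}(\pi X', {\Sigma}^r \pi Y') \simeq \bigoplus_{p \in \mathbb {Z}} {\rm Hom}_{{\mathcal {D}}_{fd}(B)}(X', F^p {\Sigma}^r Y'). \]
I would then fix $X', Y' \in {\mathcal {S}}_m$ and $1 \leq r \leq m$ and show the sum collapses: objects of ${\mathcal {S}}_m$ have cohomology in degrees $[-m,0]$ with the degree-$(-m)$ part projective; the Serre functor $S = \tau {\Sigma}$ of ${\mathcal {D}}_{fd}(B)$ and its inverse spread this window by at most one degree in one direction; so for $|p| \geq 2$ the object $F^p{\Sigma}^r Y'$ lands in a range of degrees disjoint from the ``reach'' of $X'$ (which is $[-m-1,0]$), for $p = 1$ the only boundary contribution occurs at $r=1$ and is an ${\rm Ext}^1_B$ \emph{out of the projective summand} ${\Sigma}^m B$, hence zero, and Serre duality rewrites the $p=-1$ term as $D\,{\rm Hom}_{{\mathcal {D}}_{fd}(B)}(Y', {\Sigma}^{m+1-r}X')$. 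This yields
\[ {\rm Hom}_{{\mathcal {C}}_Q^{(m)}}(\pi X', {\Sigma}^r \pi Y') \simeq {\rm Hom}_{{\mathcal {D}}_{fd}(B)}(X', {\Sigma}^r Y') \oplus D\,{\rm Hom}_{{\mathcal {D}}_{fd}(B)}(Y', {\Sigma}^{m+1-r}X'), \qquad 1 \leq r \leq m. \]

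Now specialize $X' = Y' = T'$. As $r$ runs over $1, \ldots, m$ so does $m+1-r$, so the left side vanishes for all these $r$ exactly when ${\rm Hom}_{{\mathcal {D}}_{fd}(B)}(T', {\Sigma}^r T') = 0$ for $1 \leq r \leq m$; combined with the first paragraph this says precisely that $T'$ is partial silting. Since $T = \pi(T')$ is $m$-cluster tilting it is in particular $m$-rigid, so the first assertion follows. For maximality I would use that $\pi$ induces a bijection on isomorphism classes of indecomposables between ${\mathcal {S}}_m$ and ${\mathcal {C}}_Q^{(m)}$: if some indecomposable $V \in {\mathcal {S}}_m \setminus {\rm add}\, T'$ made $T' \oplus V$ partial silting, then $\pi(V) \notin {\rm add}\, T$, yet the displayed formula applied to $X' = Y' = T' \oplus V$ shows $T \oplus \pi(V)$ is $m$-rigid; since ${\mathcal {C}}_Q^{(m)}$ is $(m+1)$-Calabi-Yau, $m$-rigidity is two-sided, so the defining property of the $m$-cluster tilting object $T$ forces $\pi(V) \in {\rm add}\, T$ --- a contradiction. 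Thus $T'$ is maximal.

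The main obstacle is the collapse of the orbit sum to the two-term formula: it requires tracking carefully how the powers $S^{\pm p}$ enlarge the cohomological support of objects of ${\mathcal {S}}_m$, and in particular recognizing that the single surviving boundary term is an extension out of a projective module and therefore vanishes. A further, more delicate point arises if one wants maximality without assuming a priori that the complement lies in ${\mathcal {S}}_m$: one must then show that the vanishing ${\rm Hom}_{{\mathcal {D}}_{fd}(B)}(V, {\Sigma}^{>0} T') = 0 = {\rm Hom}_{{\mathcal {D}}_{fd}(B)}(T', {\Sigma}^{>0} V)$ confines the cohomology of $V$ to a bounded interval and that each $F$-orbit meets ${\mathcal {S}}_m$ in a unique object, reducing to the case already treated.
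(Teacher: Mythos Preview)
The paper does not supply its own proof of this lemma; it is quoted with the citation \cite{BRT} (Buan--Reiten--Thomas) and used as a black box. So there is no argument in the paper to compare against beyond the attribution.

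Your proposal is a correct, essentially self-contained reconstruction of the result. The crucial step --- collapsing the orbit sum $\bigoplus_{p\in\mathbb{Z}}{\rm Hom}_{{\mathcal {D}}_{fd}(B)}(X',F^{p}{\Sigma}^{r}Y')$ to the two terms $p=0$ and $p=-1$ for $X',Y'\in{\mathcal {S}}_m$ and $1\leq r\leq m$ --- goes through exactly as you outline: for $p\geq 2$ the cohomological supports of $X'$ and $F^{p}{\Sigma}^{r}Y'$ are disjoint (even after the hereditary ${\rm Ext}^{1}$ shift); for $p=1$ the only possibly surviving piece is an ${\rm Ext}^{1}_{B}$ out of the projective summand sitting in degree $m$, hence zero; and the Serre-duality identity ${\rm Hom}(X',F^{p}{\Sigma}^{r}Y')\simeq D{\rm Hom}(Y',F^{-p-1}{\Sigma}^{m+1-r}X')$ (using $S=F^{-1}{\Sigma}^{m+1}$) then handles $p\leq -2$ by symmetry. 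Your treatment of the partial-silting claim for $i\geq m+1$ and the maximality argument via the bijection ${\mathcal {S}}_m\leftrightarrow{\mathcal {C}}_Q^{(m)}$ are both sound, and you correctly flag the extra reduction needed when the hypothetical complement $V$ is not assumed a priori to lie in ${\mathcal {S}}_m$: the partial-silting condition against $T'$ (which contains summands in every degree $0,\ldots,m$) bounds the cohomological amplitude of $V$, after which a suitable $F$-shift lands it in ${\mathcal {S}}_m$. This is in substance the approach taken in \cite{BRT}.
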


An object in ${\mathcal {D}}_{fd}(B)$ which satisfies the `maximal
partial silting' property as in Lemma \ref{37} is called a `silting'
object in \cite{BRT}. Next we will show that our definition for
silting object in per$B$ coincides with their definition.

\begin{lem}\label{45}
Let $U$ be a basic partial silting object in ${\mathcal
{D}}_{fd}(B)$. Then $U$ is maximal partial silting if and only if
$U$ generates ${\rm per}B$.
\end{lem}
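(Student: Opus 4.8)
The point of departure is that, since $B=kQ$ with $Q$ acyclic is hereditary, one has $\mathcal{D}_{fd}(B)=\mathrm{per}B=\mathcal{D}^b(\mathrm{mod}B)$, a Hom-finite Krull--Schmidt triangulated category in which every indecomposable object is a shift $\Sigma^{a}N$ of an indecomposable $B$-module. The bridge I would use is that a basic partial silting object $U=\bigoplus_{j=1}^{r}\Sigma^{a_j}M_j$ (the pairs $(M_j,a_j)$ pairwise distinct, $M_j$ indecomposable) yields an exceptional sequence of length $r$ in $\mathcal{D}^b(\mathrm{mod}kQ)$. Indeed, $\mathrm{Ext}^1_B(M_j,M_j)=\mathrm{Hom}_{\mathcal{D}}(\Sigma^{a_j}M_j,\Sigma^{a_j+1}M_j)=0$, so each $M_j$ is rigid indecomposable, hence exceptional with $\mathrm{End}_B(M_j)=k$; for $a_j=a_k$ the partial silting condition forces $\mathrm{Ext}^1_B(M_j,M_k)=0$, so the modules of a given degree form a basic partial tilting module, whose summands order into an exceptional sequence (Ringel); and for $a_j\ne a_k$ the vanishing $\mathrm{Hom}_{\mathcal{D}}(U,\Sigma^{>0}U)=0$ forces, over the hereditary $B$, the whole graded Hom-space from the higher-degree summand to the lower-degree one to vanish. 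Ordering the $\Sigma^{a_j}M_j$ by increasing degree, and inside each degree as just described, one obtains the exceptional sequence. Since an exceptional sequence in $\mathcal{D}^b(\mathrm{mod}kQ)$ has length at most $n:=\#Q_0$, and a complete one (length $n$) generates $\mathcal{D}^b(\mathrm{mod}kQ)$, it follows that \emph{every basic partial silting object has at most $n$ indecomposable summands, with equality precisely when it generates $\mathrm{per}B$}.

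Given this, the implication ``$U$ generates $\mathrm{per}B$ $\Rightarrow$ $U$ is maximal partial silting'' is short. If $U$ generates and is partial silting, it is a silting object, so by \cite{Ke94} its dg endomorphism algebra $\Gamma$ is derived equivalent to $B$; since $H^0\Gamma=\mathrm{End}_B(U)$ is a basic algebra with $r$ simple modules, the argument of Lemma~\ref{38} (via \cite{Bon}) yields $\mathbb{Z}^{r}\cong K_0(\mathrm{per}\Gamma)\cong K_0(\mathrm{per}B)\cong \mathbb{Z}^{n}$, so $r=n$. As no partial silting object can have more than $n$ indecomposable summands, $U$ admits no proper partial silting overobject, i.e.\ $U$ is maximal partial silting. (Alternatively, this is the general fact, recorded in \cite{AI}, that silting objects are maximal among presilting objects.)

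For the converse, suppose $U$ is maximal partial silting but has $r<n$ indecomposable summands. Let $(F_1,\dots,F_r)$ be the exceptional sequence above, ordered by increasing degree. By the perpendicular-category theorem (Geigle--Lenzing, Schofield; Crawley--Boevey for quivers) its right orthogonal in $\mathcal{D}^b(\mathrm{mod}kQ)$ is equivalent to the bounded derived category of a module category $\mathrm{mod}kQ'$ with $\#Q'_0=n-r>0$, so it contains an exceptional object $G$; thus $(F_1,\dots,F_r,G)$ is again exceptional, which in particular means $\mathrm{Hom}^\bullet_{\mathcal{D}}(G,F_\ell)=0$ for all $\ell$. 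Replacing $G$ by a sufficiently high shift $\Sigma^{s}G$ (both properties are preserved), we may assume $\deg(\Sigma^{s}G)>\max_j a_j$. A degree count over the hereditary $B$ then shows that $U\oplus\Sigma^{s}G$ is again partial silting: $\mathrm{Hom}_{\mathcal{D}}(U,\Sigma^{>0}\Sigma^{s}G)=0$ because $\Sigma^{s}G$ sits in a strictly higher degree than every summand of $U$ (so the relevant $\mathrm{Ext}_B$'s are in degrees $\ge 2$); $\mathrm{Hom}_{\mathcal{D}}(\Sigma^{s}G,\Sigma^{>0}U)=0$ is part of the exceptional-sequence vanishing $\mathrm{Hom}^\bullet_{\mathcal{D}}(\Sigma^{s}G,F_\ell)=0$; and $\mathrm{Hom}_{\mathcal{D}}(\Sigma^{s}G,\Sigma^{>0}\Sigma^{s}G)=0$ since $G$ is exceptional. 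As $\Sigma^{s}G\notin\mathrm{add}\,U$ (its degree is too large to be a summand of $U$), this contradicts maximality. Hence $r=n$, $(F_1,\dots,F_n)$ is a complete exceptional sequence, and $U$ generates $\mathcal{D}^b(\mathrm{mod}kQ)=\mathrm{per}B$. (One could instead invoke the classification of the maximal partial silting objects of $\mathcal{D}^b(kQ)$ in \cite{KV} and \cite{BRT}.)

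The main obstacle is this converse direction, and inside it the step that $U\oplus\Sigma^{s}G$ is still partial silting: this requires both that an incomplete exceptional sequence over $kQ$ can be extended \emph{at its right-hand end} by an exceptional object, and the use of hereditariness together with the freedom to shift $G$ into a high degree so that every new positive Hom-space is forced to vanish purely for degree reasons. The forward implication, by contrast, is essentially a rank count on $K_0$ once the bound ``at most $n$ summands'' is available.
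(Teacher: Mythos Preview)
Your proof is correct. For the forward direction both you and the paper use Lemma~\ref{38} to obtain $r=n$ and then deduce maximality from the bound on the number of summands; the paper phrases this last step by quoting Lemma~2.2 of \cite{BRT}, whereas you derive the bound yourself from the exceptional-sequence interpretation, but the content is the same.

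For the converse the two arguments diverge. The paper decomposes $U=\bigoplus_i\Sigma^{k_i}U_i$ with $U_i\in\mathrm{mod}\,B$, sets $U'=\bigoplus_i U_i$, and then invokes Lemma~2.2 of \cite{BRT} as a black box to conclude that the summands of $U'$ can be ordered into a \emph{complete} exceptional sequence in $\mathrm{mod}\,B$; generation then follows from Lemma~3 of \cite{CB}, which says that the smallest subcategory of $\mathrm{mod}\,B$ containing a complete exceptional sequence and closed under extensions, kernels of epimorphisms and cokernels of monomorphisms is all of $\mathrm{mod}\,B$. Your argument is instead a direct contrapositive: if $r<n$ you use the perpendicular-category description to locate an extra exceptional object $G$ on the right of the sequence, shift it past the top degree of $U$, and verify via a hereditary degree count that $U\oplus\Sigma^sG$ is again partial silting, contradicting maximality. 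Your route is more self-contained---you are in effect reproving the relevant implication of \cite{BRT} Lemma~2.2 by hand---and it makes explicit \emph{why} a non-generating partial silting object fails to be maximal; the paper's route is shorter on the page but relies on two external results whose proofs encode the same exceptional-sequence machinery.
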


\begin{proof}
On one hand, assume that $U$ is a basic partial silting object and
generates per$B$. By Lemma \ref{38}, the object $U$ has the same
number of indecomposable direct summands as that of the dg algebra
$B$ itself. That is, $U$ is a basic partial silting object with
$|Q_0|$ indecomposable direct summands. Following from Lemma 2.2 in
\cite{BRT}, we obtain that $U$ is a maximal partial silting object.

On the other hand, assume that $U$ is a maximal partial silting
object in ${\mathcal {D}}_{fd}(B)$. We decompose $U$ into a direct
sum ${\Sigma}^{k_1}U_1 \oplus \ldots \oplus {\Sigma}^{k_r}U_r$ such
that each $U_i$ lies in ${\rm mod}B$ and $k_1 < \ldots < k_r$. Set
$U' = {\oplus}^{r}_{i=1} U_i$. It follows from Lemma 2.2 in
\cite{BRT} that the object $U'$ can be ordered to a complete
exceptional sequence. Let $C(U')$ be the smallest full subcategory
of ${\rm mod}B$ which contains $U'$ and is closed under extensions,
kernels of epimorphisms, and cokernels of monomorphisms. By Lemma 3
in \cite{CB}, the subcategory $C(U')$ is equal to ${\rm mod}B$. As a
consequence, the object $U$ generates ${\mathcal {D}}_{fd}(B)$ which
is equal to per$B$.
\end{proof}

Since $B$ is finite dimensional and hereditary, the subcategory
${\mathcal {S}}^0_m$ is contained in $^{\perp}{\mathcal
{D}}(B)^{\leq -m-1}$. The isomorphism $${\rm Hom}_{{\mathcal
{D}}(B)} ({\Sigma}^{m} B, M) \simeq H^mM \quad (M \in {\mathcal
{D}}(B))$$ implies that ${\Sigma}^m B$ is in $^{\perp}{\mathcal
{D}}(B)^{\leq -m-1}$. So ${\mathcal {S}}_m$ is contained in
${\mathcal {D}}(B)^{\leq 0} \cap ^{\perp}{\mathcal {D}}(B)^{\leq
-m-1} \cap {\mathcal {D}}_{fd}(B)$.

Set $Z = T' \overset{L}\otimes_{B}\Gamma$. For any object $N$ in
${\mathcal {D}}(\Gamma)$, we have the following canonical
isomorphism $${\rm Hom}_{{\mathcal {D}}(\Gamma)}(T' \overset{L}
\otimes_B \Gamma, N) \simeq {\rm Hom}_{{\mathcal {D}}(B)}(T', \,
{\rm RHom}_{\Gamma}(\Gamma,N)).$$ When $N$ lies in ${\mathcal
{D}}(\Gamma)^{\leq -m-1}$, the right hand side of the above
isomorphism becomes zero. Thus, the object $Z$ is in the fundamental
domain of ${\mathcal {D}}(\Gamma)$. The spaces ${\rm
Ext}^i_{{\mathcal {C}}_Q^{(m)}}(T,T)$ vanish for integers $1 \leq i
\leq m$, following the proof of Proposition \ref{21}, the space
${\rm Hom}_{{\mathcal {D}}(\Gamma)}(Z,\Sigma^iZ)$ is zero for each
positive integer $i$. In addition, Lemma \ref{37} and Lemma \ref{45}
together imply that $T'$ generates ${\mathcal {D}}_{fd}(B)$. Hence,
the object $Z$ generates per$\Gamma$. So $Z$ is a basic silting
object whose image in ${\mathcal {C}}_{\Gamma}$ is $T \overset{L}
\otimes_B \Gamma$.

Now we conclude the above analysis to get the following proposition.

\begin{prop}\label{44}
Let $Q$ be an acyclic quiver and $B$ its path algebra. Let $\Gamma$
be the completed Ginzburg dg category
${\widehat{\Gamma}_{m+2}}(Q,0)$ and ${\mathcal {C}}_{\Gamma}$ the
generalized $m$-cluster category. Then any $m$-cluster tilting
object in ${\mathcal {C}}_{\Gamma}$ is induced by a silting object
in $\mathcal {F}$ under the canonical projection $\pi: {\rm
per}\Gamma \rightarrow {\mathcal {C}}_{\Gamma}$.
\end{prop}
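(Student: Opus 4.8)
The plan is to transport everything along the triangle equivalence $-\overset{L}{\otimes}_B\Gamma$ between the classical $m$-cluster category ${\mathcal {C}}_Q^{(m)}$ and ${\mathcal {C}}_{\Gamma}$ exhibited in the proof of Proposition \ref{21}, where $B = kQ$. Fix an $m$-cluster tilting object of ${\mathcal {C}}_{\Gamma}$; under the equivalence it is of the form $T\overset{L}{\otimes}_B\Gamma$ for some $m$-cluster tilting object $T$ of ${\mathcal {C}}_Q^{(m)}$. As recalled above, $T$ lifts to an object $T'$ of the fundamental domain ${\mathcal {S}}_m = {\mathcal {S}}_m^0 \vee {\Sigma}^m B$ of ${\mathcal {D}}_{fd}(B)$, and since $B$ is hereditary, together with the identification ${\rm Hom}_{{\mathcal {D}}(B)}({\Sigma}^m B, M) \simeq H^m M$, one has ${\mathcal {S}}_m \subseteq {\mathcal {D}}(B)^{\leq 0} \cap {}^{\perp}{\mathcal {D}}(B)^{\leq -m-1} \cap {\mathcal {D}}_{fd}(B)$. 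Set $Z := T'\overset{L}{\otimes}_B\Gamma$. Using the adjunction ${\rm Hom}_{{\mathcal {D}}(\Gamma)}(T'\overset{L}{\otimes}_B\Gamma, N) \simeq {\rm Hom}_{{\mathcal {D}}(B)}(T', {\rm RHom}_{\Gamma}(\Gamma, N))$ one checks that ${\rm Hom}_{{\mathcal {D}}(\Gamma)}(Z, N)$ vanishes for every $N$ in ${\mathcal {D}}(\Gamma)^{\leq -m-1}$; combined with $Z \in {\mathcal {D}}(\Gamma)^{\leq 0} \cap {\rm per}\Gamma$, this places $Z$ in the fundamental domain $\mathcal {F}$.

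It then remains to prove that $Z$ is a silting object. For the vanishing of ${\rm Hom}_{{\mathcal {D}}(\Gamma)}(Z, {\Sigma}^i Z)$ with $i > 0$ I would re-run the computation from the proof of Proposition \ref{21}: writing $\Gamma = {\oplus}_{p \geq 0}{\Theta}^{\overset{L}{\otimes}_B p}$ with $-\overset{L}{\otimes}_B\Theta \simeq \tau^{-1}{\Sigma}^m$, one obtains ${\rm Hom}_{{\mathcal {D}}(\Gamma)}(Z, {\Sigma}^i Z) \simeq {\oplus}_{p \geq 0}{\rm Ext}^i_{{\mathcal {D}}_{fd}(B)}(T', (\tau^{-1}{\Sigma}^m)^p T')$, which is a direct summand of ${\rm Ext}^i_{{\mathcal {C}}_Q^{(m)}}(T, T)$. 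The latter is zero for $1 \leq i \leq m$ because $T$ is $m$-cluster tilting, and zero for $i > m$ because $T' \in {\mathcal {S}}_m$ while ${\Sigma}^i T' \in {\mathcal {D}}(B)^{\leq -m-1}$. For the generation condition, Lemma \ref{37} says that $T'$ is maximal partial silting in ${\mathcal {D}}_{fd}(B)$, so by Lemma \ref{45} it generates ${\rm per}B = {\mathcal {D}}_{fd}(B)$; since the triangulated functor $-\overset{L}{\otimes}_B\Gamma$ sends $B$ to $\Gamma$, we get ${\rm thick}(Z) \ni \Gamma$, hence ${\rm thick}(Z) = {\rm per}\Gamma$. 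Thus $Z$ is a basic silting object lying in $\mathcal {F}$.

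Finally, by commutativity of the diagram in the proof of Proposition \ref{21} one has $\pi(Z) = T\overset{L}{\otimes}_B\Gamma$, so the fixed $m$-cluster tilting object of ${\mathcal {C}}_{\Gamma}$ is induced by the silting object $Z \in \mathcal {F}$. The main obstacle, already packaged into Lemma \ref{37} and Lemma \ref{45}, is the generation step: matching the Buan--Reiten--Thomas notion of ``silting'' (maximal partial silting in ${\mathcal {D}}_{fd}(B)$) with ours (partial silting together with generating ${\rm per}B$), which rests on the Grothendieck-group comparison $K_0({\rm per}B) \simeq K_0({\rm add}B)$ and on the fact that a complete exceptional sequence generates ${\rm mod}B$ under extensions, kernels of epimorphisms and cokernels of monomorphisms. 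The rest is bookkeeping with the shifted $t$-structures and the known description of $\Gamma$ as a tensor dg algebra over $B$.
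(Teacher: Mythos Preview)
Your proposal is correct and follows essentially the same route as the paper: the analysis immediately preceding the proposition already establishes that $Z = T'\overset{L}{\otimes}_B\Gamma$ lies in $\mathcal{F}$, has vanishing positive self-extensions (via the computation of Proposition \ref{21}), and generates ${\rm per}\Gamma$ (via Lemmas \ref{37} and \ref{45}), so the formal proof is a one-line summary. Your identification of the generation step as the crux, and of its resolution through the equivalence of the two notions of silting, matches the paper's logic exactly.
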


\begin{proof}
Let $\overline{T}$ be an $m$-cluster tilting object in ${\mathcal
{C}}_{\Gamma}$. Then $\overline{T}$ can be written as $T \overset{L}
\otimes_B \Gamma$ for some $m$-cluster tilting object $T$ in
${\mathcal {C}}_Q^{(m)}$, where $T$ is induced by some silting
object $T'$ in ${\mathcal {D}}_{fd}(B)$. The object $T'
\overset{L}\otimes_{B}\Gamma$ (denoted by $Z$) is a silting object
in the fundamental domain ${\mathcal {F}} (\subseteq \mbox{per}
{\Gamma})$ whose image under the canonical projection $\pi: {\rm
per}\Gamma \rightarrow {\mathcal {C}}_{\Gamma}$ is equal to
$\overline{T}$. This completes the proof.
\end{proof}



\end{document}